\newtheorem{theorem}{Theorem}[section]
\newtheorem{corollary}[theorem]{Corollary}
\newtheorem{lemma}[theorem]{Lemma}
\newtheorem{conjecture}[theorem]{Conjecture}
\newtheorem{proposition}[theorem]{Proposition}
\theoremstyle{definition}
\newtheorem{question}[theorem]{Question}
\theoremstyle{remark}
\numberwithin{equation}{section}
\def\N{{\mathbb{N}}}
\def\Z {{\mathbb{Z}}}
\def\supp{{\mathrm{supp}\,}}
\def\3{{|\!|\!|}}
\def\lh{{\mathrm{lh}}}
\def\het{{\mathrm{ht}}}
\def\acts{{\curvearrowright}}
\begin{document}

\title{Non-Archimedean Abelian Polish Groups and Their Actions}

\author{Longyun Ding}
\address{School of Mathematical Sciences and LPMC \\ Nankai University \\ Tianjin 300071 \\ P.R. China}
\email{dinglongyun@gmail.com}
\author{Su Gao}
\address{Department of Mathematics\\ University of North Texas\\ 1155 Union Circle \#311430\\  Denton, TX 76203\\ U.S.A.}
\email{sgao@unt.edu}

\date{\today}
\subjclass[2010]{Primary 03E15, 22A05; Secondary 54H05, 54H11}
\keywords{Polish group, orbit equivalence relation, Borel reducible, Borel equivalence relation, non-archimedean, tame, essentially countable, essentially hyperfinite}
\thanks{The first author's research is partially supported by the National Natural Science Foundation of China (Grant No. 11371203). The second author's research is partially supported by the US NSF grant DMS-1201290. Both authors acknowledge the Institute for Mathematical Sciences at the National University of Singapore for sponsoring a research visit during which this research was done.}
\maketitle \thispagestyle{empty}

\begin{abstract} 
In this paper we consider non-archimedean abelian Polish groups whose orbit equivalence relations are all Borel. Such groups are called tame. We show that a non-archimedean abelian Polish group is tame if and only if it does not involve $\Z^\omega$ or $(\Z(p)^{<\omega})^\omega$ for any prime $p$. In addition to determining the structure of tame groups, we also consider the actions of such groups and study the complexity of their orbit equivalence relations in the Borel reducibility hierarchy. It is shown that if  such an orbit equivalence relation is essentially countable, then it must be essentially hyperfinite. We also find an upper bound in the Borel reducibility hierarchy for the orbit equivalence relations of all tame non-archimedean abelian Polish groups. 
\end{abstract}


\section{Introduction}
A Polish group $G$ is {\em tame} if any orbit equivalence relation induced by a Borel action of $G$ on a Polish space is Borel. It is well known that locally compact Polish groups are tame. Solecki \cite{solecki} showed that the converse is not true. In fact he completely characterized tameness among product groups of the form $\prod_{n\in\omega} H_n$, where each $H_n$ is countable discrete abelian. It follows from his results that $\mathbb{Z}^\omega$ and $(\mathbb{Z}(p)^{<\omega})^\omega$, for any prime $p$, are examples of Polish groups that are not tame. If $G$ and $H$ are Polish groups, we say that $G$ {\em involves} $H$ if there is a continuous surjective homomorphism from a closed subgroup of $G$ onto $H$. If $G$ involves $H$ and $G$ is tame, then so is $H$.

In this paper we completely characterize tameness for all non-archimedean abelian Polish groups. These are exactly all closed subgroups of product groups of the form $\prod_n H_n$, where each $H_n$ is countable discrete abelian. Our result thus generalizes Solecki's. The following is our first main theorem.

\begin{theorem} \label{main1} Let $G$ be a non-archimedean abelian Polish group. Then the following are equivalent:
\begin{enumerate}
\item[\rm (i)] $G$ is tame, i.e., any orbit equivalence relation induced by a Borel action of $G$ on a Polish space is Borel;
\item[\rm (ii)] There is a decreasing sequence $(G_k)_{k\in\omega}$ of open subgroups of $G$ which is a nbhd base of the identity element of $G$ such that 
\begin{itemize}
\item[\rm (a)] for all but finitely many $k$, $G_k/G_{k+1}$ is torsion, and, 
\item[\rm (b)] for any prime $p$ and for all but finitely many $k$, $G_k/G_{k+1}$ contains only finitely many elements of order $p$;
\end{itemize}
\item[\rm (iii)] $G$ does not involve either $\mathbb{Z}^\omega$ or $(\mathbb{Z}(p)^{<\omega})^\omega$ for any prime $p$.
\end{enumerate}
\end{theorem}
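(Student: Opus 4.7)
The plan is to establish the cyclic chain (i) $\Rightarrow$ (iii) $\Rightarrow$ (ii) $\Rightarrow$ (i). The first implication is essentially free: the introduction already records that tameness passes to any group a tame group involves, so since Solecki has shown $\Z^\omega$ and $(\Z(p)^{<\omega})^\omega$ are not tame, a tame $G$ cannot involve either.

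For (iii) $\Rightarrow$ (ii), I would argue contrapositively. Suppose no decreasing nbhd base of open subgroups of $G$ realises both (a) and (b). Then, by possibly refining, one arranges a base in which either (a) fails along an infinite set of $k$, or (b) fails for some prime $p$. In the first case I would select $g_k\in G_k\setminus G_{k+1}$ whose cosets $g_kG_{k+1}$ have infinite order in $G_k/G_{k+1}$; the closed subgroup $H\le G$ they generate inherits the filtration $\{H\cap G_k\}$, and with a careful inductive choice of the $g_k$'s to eliminate algebraic relations, every $h\in H$ admits a unique convergent expansion $h=\sum_k a_kg_k$ with $a_k\in\Z$, producing a continuous surjection $H\to\Z^\omega$ and hence an involvement. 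The analogous construction with suitably chosen $p$-torsion elements of $G_k/G_{k+1}$ yields an involvement onto $(\Z(p)^{<\omega})^\omega$ in the second case. The delicate point is the inductive choice of lifts to guarantee uniqueness of the expansions; this is arranged by diagonalising at stage $k$ against the relations forced by earlier choices, which is possible because $G_{k+1}$ is open of countable index in $G_k$ and the failure of (a) or (b) supplies enough new elements.

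For (ii) $\Rightarrow$ (i), the goal is to show directly that for any Borel action $G\acts X$ the orbit equivalence relation $E_G^X$ is Borel. Using the filtration $(G_k)$ from (ii), I would pass to the countable discrete quotients $G/G_k$ and build a Borel description of $E_G^X$ via compatibility across levels, using (b) to bound the branching of orbit trees at each prime and (a) to rule out the unbounded-cyclic behaviour responsible for non-tameness of $\Z^\omega$. I expect this to be the main obstacle: unlike Solecki's product-group setting, $G$ need not split as a direct sum of its layer quotients $G_k/G_{k+1}$ but only sits as a closed subgroup of the inverse limit $\varprojlim G/G_k$, so a coordinate-wise analysis is unavailable and one must track explicitly how $G_k/G_{k+1}$-orbits glue into $G$-orbits. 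Translating the qualitative finiteness bounds from (a) and (b) into an explicit Borel code for $E_G^X$ is where the real work lies.
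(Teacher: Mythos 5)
Your implication (i)\,$\Rightarrow$\,(iii) matches the paper's argument exactly: tameness passes to involved groups, and the canonical examples are not tame by Solecki.

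Your sketch of (iii)\,$\Rightarrow$\,(ii) has the right shape but swaps where the real difficulty sits. In the non-torsion case (your Case 1) no diagonalisation against algebraic relations is actually needed: once $x_k$ is chosen inside the $n_k$-th filtration subgroup with $x_k$ of infinite order modulo the next level, the filtration itself guarantees that $\phi(f)=\sum_k f(k)x_k$ is injective, since for a nonzero $f$ the first nonzero coordinate $f(i)$ already survives in $\pi_{n_i}(\phi(f))$. By contrast, in the $p$-torsion case (your Case 2) the construction is \emph{not} a straightforward analogue: one does not obtain a closed copy of $(\Z(p)^{<\omega})^\omega$ inside $G$, but a quotient $A/A_0$ of a closed subgroup, and the inductive ``support'' bookkeeping needed to make the factors independent is the genuinely delicate part. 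You have inverted which case is hard, and you have not noticed that torsion forces a passage to a quotient. You also need to justify, rather than assert, that failure of (ii) gives a \emph{single} base with the failures concentrated (this is what the paper's Lemma~\ref{anybase} supplies; it is an easy application of Lemma~\ref{extansion} but must be stated).

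The genuine gap is (ii)\,$\Rightarrow$\,(i), which you explicitly leave open. The paper proves this via the group-tree and coset-tree machinery of Section~3: Proposition~\ref{prop:tamebytrees} reduces tameness to the existence of a countable ordinal bound on the heights of coset trees in $T_G$, and Lemma~\ref{nt2np} shows that unbounded heights force a failure of $p$-compactness at infinitely many levels, contradicting (ii). That entire apparatus --- the rank analysis of coset trees, the derivative operator, Lemma~\ref{notpcompact} relating a rank-$\omega$ element of $p$-power order to non-$p$-compactness of a layer, and the $\Pi^1_1$-boundedness argument giving (0)\,$\Leftrightarrow$\,(1) of Proposition~\ref{prop:tamebytrees} --- is the substantive content of the implication and has no substitute in your outline. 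Your suggestion of passing to the discrete quotients $G/G_k$ and coding compatibility across levels is the natural starting point, but without a rank-boundedness statement to make the inverse-limit condition Borel, the orbit equivalence relation remains a priori only analytic.
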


Clause (ii) can be viewed as a description of the structure of all tame non-archimedean abelian Polish groups. A similar structural theorem for the so-called quasi-reduced case was independently obtained by Malicki \cite{Ma2} recently. Clause (iii) says that $\mathbb{Z}^\omega$ and $(\mathbb{Z}(p)^{<\omega})^\omega$ are canonical examples of non-tame non-archimedean abelian Polish groups, that is, any non-tame such groups must involve one of these canonical examples. 

We also consider the orbit equivalence relations induced by Borel actions of tame non-archimedean abelian Polish groups. Our main results give some upper bounds for these equivalence relations in the Borel reducibility hierarchy. For equivalence relations $E, F$ on Polish spaces $X, Y$, respectively, we say that $E$ is {\em Borel reducible} to $F$, denoted $E\leq_B F$, if there is a Borel function $\varphi: X\to Y$ such that for all $x, x'\in X$, we have $xEx'\iff \varphi(x)F\varphi(x')$. We will refer to the equivalence relation $E_0$ on $2^\omega$, which is defined by 
$$ xE_0 y\iff \exists n\ \forall m\geq n\ x(m)=y(m). $$
If $E$ is an equivalence relation on a Polish space $X$, then we also define the following equivalence relations $E^\omega$ and $E^+$ on $X^\omega$:
$$ (x_n)E^\omega (y_n)\iff \forall n\ x_nEy_n $$
$$ (x_n)E^+(y_n)\iff \forall n\ \exists m\ x_n Ey_m\mbox{ and } \forall m\ \exists n\ x_n Ey_m. $$
Our main results on the complexity of the orbit equivalence relations are as follows.

\begin{theorem}\label{main3} If $G$ is a tame non-archimedean abelian Polish group, then any orbit equivalence relation induced by a Borel action of $G$ is Borel reducible to $(E_0^\omega)^{+++}$.
\end{theorem}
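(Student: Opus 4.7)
My plan is to exploit the structural description of tame $G$ given by Theorem \ref{main1}(ii). Fix a decreasing nbhd base $(G_k)_{k\in\omega}$ of open subgroups with $G_0=G$ and $\bigcap_k G_k=\{e\}$ such that, for all but finitely many $k$, the quotient $A_k:=G_k/G_{k+1}$ is abelian, torsion, with only finitely many elements of each prime order. By the Becker--Kechris theorem I may assume the action of $G$ on $X$ is continuous. Since $G$ is abelian, every $G_k$ is normal, each $G/G_k$ is a countable discrete abelian group, and $G$ is the inverse limit of the tower $(G/G_k)_k$. Let $E_k$ denote the orbit equivalence relation of $G_k\acts X$, so that $E_0=E_G^X$ and $E_k\supseteq E_{k+1}$.

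The approach is to build a Borel reduction in three nested layers, one for each application of the $(\cdot)^+$ jump in $(E_0^\omega)^{+++}$. At the ground level $E_0^\omega$, I would encode a single group element $g\in G$ by its coset tower $(gG_k)_{k\in\omega}$, using clause (ii)(b) of Theorem \ref{main1} and the primary decomposition of each $A_k$ to identify each coset with a point of $2^\omega$ so that two liftings of the same $G$-element become coordinatewise eventually equal, producing a well-defined $E_0^\omega$-class. The first $(\cdot)^+$ absorbs the ambiguity modulo the stabiliser: for a fixed base point $x\in X$ and a fixed $y$ in its $G$-orbit, the set of encodings of all $g$ with $g\cdot x=y$ forms a single $(E_0^\omega)^+$-invariant. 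The second $(\cdot)^+$ absorbs the choice of base point $y$ within the $G$-orbit of $x$, gathering the previous invariants into a set and yielding an $(E_0^\omega)^{++}$-invariant of the whole orbit. The third $(\cdot)^+$ absorbs the final Borel bookkeeping that glues the level-by-level encodings --- in particular, the ambiguity of choosing Borel selectors for the quotient maps $G\to G/G_k$ --- into a single Borel function of $x$.

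The main obstacle will be at the ground level: verifying that the coset-tower encoding assigns to $g$ a point of $(2^\omega)^\omega$ such that different Borel liftings of the same $G$-element produce $E_0^\omega$-equivalent outputs, with no residual tail deviation that would require a strictly coarser relation like $=^+$. This is precisely where clauses (ii)(a) and (ii)(b) of Theorem \ref{main1} do the work. Condition (ii)(a) guarantees that transitions between successive cosets are torsion, so the ``digits'' of the encoding live in a tower of torsion groups; condition (ii)(b) forces the $p$-primary part of each $A_k$ to be finite for every prime $p$, so the digit contributed at level $k$ and prime $p$ ranges over a finite set, and hence any two liftings of the same coset tower must agree beyond some stage in each coordinate. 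Controlling this uniformly across all $k$ and all primes $p$ while maintaining Borelness is the delicate combinatorial step; the upper layers are comparatively formal applications of the $(\cdot)^+$ jump to quotient away the successive choices of representative, base point and selector.
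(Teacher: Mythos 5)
Your proposal diverges fundamentally from the paper's proof and, more importantly, the reduction you sketch cannot work as stated. The central problem is your description of the second $(\cdot)^+$ layer: you propose to gather, over all $y$ in the $G$-orbit of $x$, the family of cosets $\{g : g\cdot x = y\}$. First, as $y$ ranges over $G\cdot x$, these sets are exactly \emph{all} the cosets of the stabiliser $G_x$, so the putative invariant depends on $x$ only through $G_x$; any two points with equal stabilisers (for instance, all points of a free action) would receive the same code, and this cannot be a reduction of $E^X_G$. Second, the $(\cdot)^+$ jump acts on \emph{countable} sequences, whereas the orbit $G\cdot x$ and the stabiliser cosets are in general uncountable, so even the quantification you describe does not produce a legitimate $(E^\omega_0)^{++}$-valued Borel map. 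The third $+$, which you reserve for ``Borel bookkeeping,'' is not doing any identifiable mathematical work and does not correspond to any step one could make precise. In short, the three applications of $(\cdot)^+$ in the target $(E_0^\omega)^{+++}$ are not accounting for stabiliser ambiguity, choice of base point, and selector bookkeeping.

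The paper's actual route is through the rank of coset trees, and this is where the three $+$'s come from. After passing (via the Becker--Kechris universality of the logic action) to the $G$-action on $\mathcal{T}^\omega$, the orbit equivalence of two sequences of trees is coded by illfoundedness of an associated coset tree $\Psi$, and the key bound is Corollary~\ref{generalrank}: for tame $G$ every such coset tree has height strictly below $\omega\cdot 4$. The ground-level reduction to $E_0^\omega$ (Lemmas~\ref{finiterank} and~\ref{seqfiniterank}) handles coset trees of height $<\omega$ by applying the Gao--Jackson hyperfiniteness theorem to the countable abelian group $T_G\cap H^m$ acting on finite-level data. Each additional block of $\omega$ in the height of the coset tree then costs exactly one $(\cdot)^+$ jump, via the splitting construction of Lemma~\ref{rankbelowomega2}/\ref{seqrankbelowomega2}: one splits each tree into the subtrees above its level-$m$ nodes, codes the resulting sequence, and passes to the $+$-class to quotient out the (countable!) ambiguity in the level-$m$ enumeration. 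Three $\omega$-blocks in $\omega\cdot 4$ give three $+$'s. None of this appears in your outline, and the role that Theorem~\ref{main1}(ii)(a)--(b) plays is precisely to cap these ranks, not to finitise ``digit sets'' in a coset-tower encoding of single group elements.
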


\begin{theorem}\label{main2} If $G$ is a locally compact non-archimedean abelian Polish group, then any orbit equivalence relation induced by a Borel action of $G$ is Borel reducible to $E_0$.
\end{theorem}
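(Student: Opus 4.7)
The plan is to reduce the problem to a Borel action of a countable discrete abelian group by peeling off a compact open subgroup, and then invoke the Gao--Jackson theorem that all Borel actions of countable abelian groups induce hyperfinite orbit equivalence relations.

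First, since $G$ is non-archimedean and locally compact, van Dantzig's theorem supplies a compact open subgroup $K\leq G$, and $G/K$ is a countable discrete abelian group. Let $a:G\acts X$ be a Borel action on a Polish space $X$. By the Becker--Kechris theorem I may change the Polish topology on $X$, keeping the Borel structure, so that $a$ becomes continuous; the $K$-orbits are then compact, and the orbit equivalence relation $E_K^X$ is a closed (in particular Borel) equivalence relation all of whose classes are compact. Such an equivalence relation is smooth: Burgess's selection theorem furnishes a Borel map $s:X\to X$ with $s(x)\in [x]_{E_K^X}$ and $s(x)=s(y)$ iff $xE_K^X y$.

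Next, since $G$ is abelian, $K$ is normal, and the formula $gK\cdot [x]_{E_K^X}=[g\cdot x]_{E_K^X}$ gives a well-defined action of the countable discrete group $G/K$ on the transversal $Y:=s(X)$; concretely, $gK\cdot y:=s(g\cdot y)$ for $y\in Y$. This is a Borel action, and by construction $xE_G^Xy$ if and only if $s(x)$ and $s(y)$ lie in the same $G/K$-orbit in $Y$. Hence the map $s:X\to Y$ is a Borel reduction of $E_G^X$ to the orbit equivalence relation $F$ on $Y$ induced by the Borel action of the countable abelian group $G/K$.

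Finally, by the Gao--Jackson theorem, every Borel action of a countable abelian group induces a hyperfinite Borel equivalence relation; in particular $F$ is hyperfinite, hence $F\leq_B E_0$. Composing, $E_G^X\leq_B F\leq_B E_0$, as required. The main work (and the only nontrivial input beyond classical descriptive set theory) is the appeal to the Gao--Jackson hyperfiniteness theorem; everything else is a routine use of van Dantzig, Becker--Kechris, and the existence of Borel selectors for smooth equivalence relations with compact classes.
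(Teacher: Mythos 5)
Your proof is correct, and it takes a genuinely different route from the paper's. The paper derives Theorem~\ref{main2} as an immediate corollary of the more general Theorem~\ref{main4}: it first cites Kechris's theorem that locally compact Polish group actions only produce essentially countable orbit equivalence relations, and then invokes Theorem~\ref{main4}, whose proof runs through the Hjorth--Mackey closed-subgroup reduction (to get from $G\leq\prod_n H_n$ to $\prod_n H_n$) and a decomposition theorem of Hjorth and Kechris (\cite{HK}, Theorem 7.3) expressing an essentially countable equivalence relation Borel reducible to an $H$-action as a countable direct sum $\bigoplus_m E^{Z_m}_{H^m}$ of orbit equivalence relations of the finite stages $H^m$, each of which is hyperfinite by Gao--Jackson. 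Your argument, by contrast, stays local: van Dantzig supplies a compact open $K\leq G$, Becker--Kechris makes the action continuous, the compact $K$-orbits give a smooth relation with a Borel transversal $Y$ via Burgess, and then normality of $K$ (from abelianness) turns $G/K\acts Y$ into a Borel action of a countable discrete abelian group, which reduces $E^X_G$ and is handled by Gao--Jackson. Both ultimately rest on Gao--Jackson, but your route is more elementary and self-contained for the locally compact case, while the paper's route has the advantage of establishing the considerably stronger Theorem~\ref{main4} (valid for all non-archimedean abelian Polish groups, not just locally compact ones), of which Theorem~\ref{main2} falls out as a one-line corollary. One small presentational point: when you check that $gK\cdot y:=s(g\cdot y)$ is a well-defined action and that $s$ is a reduction, you are tacitly commuting $g$ and $k$ at several places; this is exactly where the abelian hypothesis enters, so it is worth making those commutations explicit.
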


Both proofs use a theorem of Gao and Jackson \cite{GJ} on the hyperfiniteness of orbit equivalence relations of countable abelian group actions. Theorem~\ref{main2} can be viewed as a generalization of the Gao--Jackson theorem, since countable discrete groups are locally compact. Theorem~\ref{main2} is in fact deduced from a more general result as follows.

\begin{theorem}\label{main4} Let $G$ be a non-archimedean abelian Polish group and let $F$ be an orbit equivalence relation induced by a Borel action of $G$ on a Polish space. If $E\leq_B F$ and $E$ is essentially countable, then $E$ is essentially hyperfinite (i.e. $E\leq_B E_0$).
\end{theorem}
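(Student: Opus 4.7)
The plan is to use essential countability of $E$ to reduce the problem to showing the hyperfiniteness of a specific countable Borel equivalence relation $F\restriction X^*$, obtained as the restriction of $F$ to a Borel section of the reduction; and then to establish this hyperfiniteness by combining the open-subgroup filtration of the non-archimedean group $G$ with the Gao--Jackson theorem on hyperfiniteness of countable abelian group actions.

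First, by Becker--Kechris we may assume $G$ acts continuously on the Polish space $X$. Let $\varphi\colon Y\to X$ witness $E\leq_B F$. Since $E$ is essentially countable, there is a Borel complete section $S\subseteq Y$ on which $E\restriction S$ is countable Borel and $E\sim_B E\restriction S$. A Lusin--Novikov argument (fibers of $\varphi\restriction S$ lie within $E\restriction S$-classes and are thus countable) lets us further assume $\varphi\restriction S$ is Borel injective, so that $X^*:=\varphi(S)$ is a Borel subset of $X$, $F\restriction X^*$ is a countable Borel equivalence relation, and $\varphi$ is a Borel isomorphism $(S,E\restriction S)\cong(X^*,F\restriction X^*)$. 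It thus suffices to show $F\restriction X^*\leq_B E_0$.

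Next, fix a decreasing neighborhood base $(G_k)_{k\in\omega}$ of the identity in $G$ by open subgroups; each quotient $G/G_k$ is a countable abelian group. The closed sets $X_k:=\{x\in X:G_k\cdot x=\{x\}\}$ are $G$-invariant (using abelianness of $G$) and form an increasing sequence, and on $X_k$ the action factors through a Borel action of the countable abelian group $G/G_k$. By Gao--Jackson, $F\restriction X_k$ is hyperfinite, hence so is $F\restriction(X^*\cap X_k)$. The Dougherty--Jackson--Kechris theorem on increasing unions gives that $F$ restricted to $X^*\cap\bigcup_k X_k$ is hyperfinite. To handle the remainder $X^{\mathrm{rem}}:=X^*\setminus\bigcup_k X_k$ (points with non-open stabilizer), we apply Feldman--Moore to decompose $F\restriction X^{\mathrm{rem}}=\bigcup_n\mathrm{graph}(\phi_n)$ with Borel bijections $\phi_n(x)=g_n(x)\cdot x$ for Borel $g_n\colon X^{\mathrm{rem}}\to G$. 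Exploiting the abelianness of $G$ and the countability of $F\restriction X^*$-classes (a consequence of essential countability of $E$), we assemble countably many commuting Borel partial automorphisms of $X^{\mathrm{rem}}$ generating $F\restriction X^{\mathrm{rem}}$, equivalently a Borel action of a countable abelian group; Gao--Jackson then yields hyperfiniteness of $F\restriction X^{\mathrm{rem}}$, completing the proof.

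The principal difficulty is the remainder step: realizing $F\restriction X^{\mathrm{rem}}$ as the orbit equivalence of a Borel action of a countable abelian group. The naive attempt---using a countable subgroup of $G$ built from coset representatives of the $G_k$---fails, since the group elements $g_n(x)\in G$ realizing $F$-equivalences depend essentially on $x$ and need not lie in any fixed countable subgroup; moreover, the Feldman--Moore bijections $\phi_n,\phi_m$ arising from such varying $g_n(x),g_m(x)$ need not commute even though $G$ is abelian. The resolution is to work not with subgroups of $G$ but with countable abelian groups of Borel partial automorphisms of $X^{\mathrm{rem}}$, whose commutativity is extracted from that of $G$ by a careful Borel bookkeeping that uses the countability of each $F\restriction X^*$-class to absorb the pointwise variation of $g_n(x)$ within a single countable structure.
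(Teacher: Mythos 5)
Your opening reduction---to showing hyperfiniteness of a countable Borel equivalence relation $F\restriction X^*$ where $X^*$ is a Borel injective image of a complete section for $E$---is fine, and your observation that the fixed-point sets $X_k=\{x:G_k\cdot x=\{x\}\}$ are closed, $G$-invariant, and carry an action factoring through the countable abelian group $G/G_k$ is correct, as is the passage through the increasing-union theorem for $F$ restricted to $\bigcup_k X_k$.

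The gap is exactly where you place it, and it is fatal rather than residual: the set $X^{\mathrm{rem}}$ of points with non-open stabilizer can be all of $X^*$ (for a free action of a non--locally-compact $G$ no point is fixed by any $G_k$), so the entire burden of the theorem can land on the remainder step. Your proposed resolution---assembling commuting Borel partial automorphisms by ``a careful Borel bookkeeping''---is asserted, not carried out, and it is not clear that it can be. Concretely, even though $g_n(x)$ and $g_m(x)$ commute in $G$, what is needed for the Feldman--Moore bijections $\phi_n$ and $\phi_m$ to commute is $g_n(g_m(x)\cdot x)\,g_m(x)=g_m(g_n(x)\cdot x)\,g_n(x)$, and nothing controls the dependence of $g_n$ on its argument. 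On each orbit there is indeed a relevant countable abelian group (the subgroup of $G/G_x$ generated by the cosets sending $x$ into $X^*$), but this group varies with the orbit, and your sketch gives no mechanism to assemble these into a Borel action of a single countable abelian group. The phrase ``absorb the pointwise variation of $g_n(x)$ within a single countable structure'' names the difficulty without resolving it.

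The paper takes an entirely different route that avoids the problem. It embeds $G$ as a closed subgroup of $H=\prod_n H_n$ with each $H_n$ countable discrete abelian, uses the Hjorth--Mackey theorem to replace the $G$-orbit equivalence relation by an $H$-orbit equivalence relation, and then invokes Theorem 7.3 of Hjorth--Kechris: if $E$ is essentially countable and Borel reducible to an $H$-orbit equivalence relation, then $E\leq_B\bigoplus_m E^{Z_m}_{H^m}$ for Borel actions of the \emph{countable} abelian groups $H^m=H_0\times\cdots\times H_{m-1}$ on Polish spaces $Z_m$. Each summand is hyperfinite by Gao--Jackson, and a countable direct sum of hyperfinite relations is hyperfinite. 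The Hjorth--Kechris decomposition is precisely the tool that accomplishes what your remainder step attempts; without it (or a genuine substitute), your argument does not close.
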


The rest of the paper is organized as follows. In Section 2 we define some basic concepts, fix some basic notation, and recall some basic results of the subject. In Section 3 we give a general development of group trees and coset trees. These are slightly more general than Solecki's results but are mostly straightforward generalizations. We will need these results in the subsequent sections. In Section 4 we prove Theorem~\ref{main1} and give some corollaries. In Section 5 we compute some upper bounds for the ranks of coset trees and group trees on tame non-archimedean abelian Polish groups, and use these results to prove Theorem~\ref{main3}. In Section 6 we give the (short) proof of Theorem~\ref{main4} and deduce some consequences of the theorem. In Section 7 we construct a universal group among all tame product groups. Finally in Section 8 we give some additional remarks and mention some further open problems.

\section{Background}

\subsection{Non-archimedean Polish groups}

A topological group is {\em Polish} if it is separable and completely metrizable. A Polish group is {\em non-archimedean} if it has a nbhd base of its identity element consisting of open subgroups. A prototypical example of non-archimedean Polish group is the {\em infinite permutation group} $S_\infty$, i.e., the group of all permutations of $\mathbb{N}$ with the pointwise convergence topology. By a theorem of Becker and Kechris (\cite{BK}; also c.f. \cite{GaoBook} Theorem 2.4.1), a Polish group is non-archimedean if and only if it is isomorphic to a closed subgroup of $S_\infty$.

For non-archimedean abelian Polish groups, the following are some useful characterizations.

\begin{proposition}\label{prop:abel} Let $G$ be a Polish group. Then the following are equivalent:
\begin{enumerate}
\item[\rm (i)] $G$ is non-archimedean abelian;
\item[\rm (ii)] $G$ is isomorphic to a closed abelian subgroup of $S_\infty$;
\item[\rm (iii)] $G$ is isomorphic to a closed subgroup of a product group $\prod_{n\in\omega}H_n$, where each $H_n$ is countable discrete abelian;
\item[\rm (iv)] $G$ is pro-countable abelian, i.e., there is an inverse system
$$ \Gamma_0\leftarrow \Gamma_1\leftarrow\cdots \Gamma_n\leftarrow \cdots $$
of countable discrete abelian groups, with projection maps $\pi_{i,j}: \Gamma_i\to \Gamma_j$ for $i>j$, such that $G$ is isomorphic to the inverse limit
$$ \varprojlim_{n}\Gamma_n=\left\{(\gamma_n)\in\prod_n \Gamma_n\,:\, \forall n\ \pi_{n+1,n}(\gamma_{n+1})=\gamma_n\right\}. $$
\end{enumerate}
\end{proposition}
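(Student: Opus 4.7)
The plan is to prove the cycle $(\mathrm{ii}) \Rightarrow (\mathrm{i}) \Rightarrow (\mathrm{iv}) \Rightarrow (\mathrm{iii}) \Rightarrow (\mathrm{ii})$. The implication $(\mathrm{ii}) \Rightarrow (\mathrm{i})$ is immediate: $S_\infty$ has the pointwise stabilizers of finite subsets of $\N$ as a neighborhood base of clopen subgroups of the identity, and intersecting these with any closed subgroup yields a neighborhood base of open subgroups there; the abelian hypothesis is preserved under isomorphism. The implication $(\mathrm{iv}) \Rightarrow (\mathrm{iii})$ is also immediate, since the inverse limit $\varprojlim \Gamma_n$ is, by definition, a closed subgroup of $\prod_n \Gamma_n$.

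The main content lies in the implication $(\mathrm{i}) \Rightarrow (\mathrm{iv})$. First I would choose a decreasing neighborhood base $(G_k)_{k \in \omega}$ of the identity consisting of open subgroups; since $G$ is abelian these are automatically normal, so $\Gamma_k := G/G_k$ is a discrete abelian group. Separability of $G$ forces each $\Gamma_k$ to be countable, since the cosets of $G_k$ form a pairwise disjoint open cover. The inclusions $G_{k+1} \subseteq G_k$ induce projection maps $\pi_{k+1,k} : \Gamma_{k+1} \to \Gamma_k$, and the natural map $\varphi : G \to \varprojlim \Gamma_k$ is a continuous homomorphism. Injectivity is automatic from $\bigcap_k G_k = \{e\}$, and $\varphi$ is a homeomorphism onto its image because the preimage of any basic neighborhood of the identity in the inverse limit equals some $G_N$.

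The hard part is surjectivity of $\varphi$. Given a coherent sequence $(g_k G_k) \in \varprojlim \Gamma_k$, pick representatives $g_k \in G$; coherence gives $g_n g_m^{-1} \in G_m$ whenever $n \geq m$, so $(g_k)$ is Cauchy with respect to the two-sided uniformity of $G$. The main obstacle is converting this into actual convergence in $G$, which I would handle by invoking the Raikov completeness of Polish groups together with the fact that in an abelian group the left, right, and two-sided uniformities coincide; the resulting limit $g \in G$ then satisfies $\varphi(g) = (g_k G_k)$ by continuity of the quotient maps.

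For $(\mathrm{iii}) \Rightarrow (\mathrm{ii})$, each countable discrete abelian $H_n$ embeds into $\mathrm{Sym}(H_n)$ by its left-regular representation. Letting $\prod_n H_n$ act on the disjoint union $\bigsqcup_n H_n$ factorwise yields a continuous injective homomorphism into $\mathrm{Sym}(\bigsqcup_n H_n) \cong S_\infty$, and one checks it is a topological embedding onto a closed subgroup (the image is cut out by the closed conditions of preserving each $H_n$ setwise and acting there by translation). Since $G$ is a closed subgroup of $\prod_n H_n$, it thereby embeds as a closed abelian subgroup of $S_\infty$.
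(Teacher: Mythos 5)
Your proof is correct and complete. Note first that the paper does not actually prove this proposition: it states "These results have appeared several times in the literature \dots and thus we consider them folklore," and cites references (Gao--Xuan, Hjorth--Kechris, Malicki, Megrelishvili--Shlossberg) rather than giving an argument. So there is no proof in the paper to compare against, and your write-up fills in the full cycle on its own.

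On the substance: the cycle $(\mathrm{ii}) \Rightarrow (\mathrm{i}) \Rightarrow (\mathrm{iv}) \Rightarrow (\mathrm{iii}) \Rightarrow (\mathrm{ii})$ is a sensible decomposition, and each step holds up. The two implications you flag as immediate really are. In $(\mathrm{i}) \Rightarrow (\mathrm{iv})$, the surjectivity argument via Raikov completeness is the right tool and is justified: every Polish group is complete with respect to its two-sided uniformity, and since $G$ is abelian the left, right, and two-sided uniformities coincide, so the coherence relations $g_n g_m^{-1} \in G_m$ for $n \geq m$ do produce a convergent sequence; and the limit $g$ satisfies $gG_k = g_kG_k$ by openness of each $G_k$. (Alternatively, one can note that an abelian Polish group admits a complete compatible translation-invariant metric, which gives the same conclusion.) The identification of $\varphi$ as a topological embedding is correct since a basic identity neighborhood in the inverse limit pulls back to some $G_N$ (using coherence to reduce a finite conjunction of constraints to a single one). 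In $(\mathrm{iii}) \Rightarrow (\mathrm{ii})$, the left-regular embedding of $\prod_n H_n$ into $\mathrm{Sym}\bigl(\bigsqcup_n H_n\bigr)$ is a standard argument for non-archimedean TSI groups, and the closedness of the image is justified by the conditions you list (setwise preservation of each block and acting on each block as a translation), both of which are closed in the pointwise topology; restricting to the closed subgroup $G$ then finishes the implication.

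One small stylistic remark: you could shorten $(\mathrm{iii}) \Rightarrow (\mathrm{ii})$ by embedding $\prod_n H_n$ directly into $S_\infty$ via the action on $\bigsqcup_n H_n$ and observing that this map is a closed topological group embedding without spelling out the defining conditions of the image, since a proper (equivalently, a closed) topological embedding of Polish groups always has closed image; but what you wrote is fully correct as is.
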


Occasionally we will also consider a slight generalization of conditions (iii) and (iv) in which the abelian assumption is removed. It is perhaps worth noting that this case corresponds exactly to non-archimedean Polish groups which admit two-sided invariant metrics. A Polish group $G$ is {\em TSI} if it admits a compatible metric $d$ which is two-sided invariant, i.e., for all $g, h, k\in G$, $d(gh, gk)=d(h,k)=d(hg, hk)$. 

\begin{proposition} Let $G$ be a Polish group. Then the following are equivalent:
\begin{enumerate}
\item[\rm (i)] $G$ is non-archimedean TSI;
\item[\rm (ii)] $G$ is isomorphic to a closed TSI subgroup of $S_\infty$;
\item[\rm (iii)] $G$ is isomorphic to a closed subgroup of a product group $\prod_{n\in\omega}H_n$, where each $H_n$ is countable discrete;
\item[\rm (iv)] $G$ is pro-countable.
\end{enumerate}
\end{proposition}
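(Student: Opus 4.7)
The plan is to verify $\mathrm{(i)} \Leftrightarrow \mathrm{(ii)}$ via Becker--Kechris, together with the cycle $\mathrm{(i)} \Rightarrow \mathrm{(iv)} \Rightarrow \mathrm{(iii)} \Rightarrow \mathrm{(i)}$. The equivalence $\mathrm{(i)} \Leftrightarrow \mathrm{(ii)}$ is immediate: by Becker--Kechris every non-archimedean Polish group is isomorphic to a closed subgroup of $S_\infty$, and the TSI property transfers across topological group isomorphisms. The implication $\mathrm{(iv)} \Rightarrow \mathrm{(iii)}$ is by definition, as the inverse limit sits inside the product $\prod_n \Gamma_n$ as a closed subgroup. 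For $\mathrm{(iii)} \Rightarrow \mathrm{(i)}$, the product $\prod_n H_n$ of countable discrete groups is non-archimedean (the kernels of the projections to the first $k$ coordinates form an open-subgroup base at the identity) and TSI via a weighted combination $d((h_n),(h_n')) = \sum_n 2^{-n}\delta_n(h_n, h_n')$ of the discrete metrics $\delta_n$ on the $H_n$; both properties pass to any closed subgroup.

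The substantive work is $\mathrm{(i)} \Rightarrow \mathrm{(iv)}$. The key step is to refine the open-subgroup nbhd base furnished by non-archimedean-ness to a base of open \emph{normal} subgroups, using TSI. Fix a compatible TSI metric $d$ on $G$. Two-sided invariance makes conjugation an isometry: $d(gxg^{-1}, gyg^{-1}) = d(gx, gy) = d(x,y)$. Hence every metric ball $B_d(e,r)$ at the identity is conjugation-invariant as a set. Given an open $U \ni e$, choose an open subgroup $V \subseteq U$, then $r > 0$ with $B_d(e,r) \subseteq V$, and set $N := \langle B_d(e,r) \rangle$. Then $N$ is an open subgroup contained in $V \subseteq U$, and it is normal because it is generated by a conjugation-invariant set. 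Iterating yields a decreasing nbhd base $(V_k)_{k\in\omega}$ of open normal subgroups of $G$.

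Setting $\Gamma_k := G/V_k$ (a countable discrete group, since $V_k$ is an open subgroup of countable index in the separable $G$), the canonical quotient maps $\pi_{k+1,k}: \Gamma_{k+1} \to \Gamma_k$ form an inverse system, and the natural map $\varphi: G \to \varprojlim_k \Gamma_k$, $g \mapsto (gV_k)_k$, is a continuous homomorphism, injective because $\bigcap_k V_k = \{e\}$. For surjectivity, let $(g_k V_k)_k$ be coherent, so that $g_k^{-1}g_m \in V_k$ for all $m \geq k$; by normality of $V_k$ also $g_m g_k^{-1} = g_k(g_k^{-1}g_m)g_k^{-1} \in V_k$, so $(g_k)$ is Cauchy in both the left and right uniformities on $G$. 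Since Polish groups are complete in the resulting two-sided uniformity (Raikov's theorem), $g_k \to g$ for some $g \in G$, and plainly $\varphi(g) = (g_k V_k)_k$. A continuous bijective homomorphism of Polish groups is a topological isomorphism (Pettis' open mapping theorem), so $G \cong \varprojlim_k \Gamma_k$, proving $\mathrm{(iv)}$.

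The main obstacle is the surjectivity step: coherent sequences in the inverse system need not correspond to elements of $G$ when one merely has an open-subgroup base (as for general non-archimedean Polish groups), and it is precisely the normality of the $V_k$ that upgrades coherence to Cauchyness in the two-sided uniformity. This normality is exactly what TSI contributes, via the conjugation-invariance of metric balls.
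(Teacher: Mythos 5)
Your proof is correct, and the key step ((i)$\Rightarrow$(iv)) is the standard argument: a TSI metric makes conjugation an isometry, so the metric balls at the identity are conjugation-invariant, which lets you refine the open-subgroup base to an open \emph{normal}-subgroup base and realize $G$ as the inverse limit of the discrete countable quotients. The paper itself offers no proof of this proposition (it labels the statement folklore, with citations), so there is nothing to compare it against directly, but what you have written is the proof those references have in mind. Two minor observations: once you know the $V_k$ are open normal subgroups forming a nbhd base, you don't strictly need normality to get Cauchyness --- for $m,n\ge k$, $g_m^{-1}g_n=(g_k^{-1}g_m)^{-1}(g_k^{-1}g_n)\in V_k$ already gives left-Cauchy, and in a TSI group the left, right, and two-sided uniformities all coincide, so Raikov completeness applies; you do, however, need normality to make the $\Gamma_k=G/V_k$ into groups and $\varphi$ into a homomorphism, so it is in no way dispensable. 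Also, your invocation of Pettis's open mapping theorem at the end requires the target $\varprojlim_k\Gamma_k$ to be Polish, which holds since it is a closed subgroup of the Polish group $\prod_k\Gamma_k$; worth saying explicitly.
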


These results have appeared several times in the literature (\cite{GX} \cite{HK0} \cite{malicki} \cite{MS}) and thus we consider them folklore. In this paper we only need the equivalent formulations (iii) in the above propositions.

\subsection{Universality of groups and orbit equivalence relations}
If $G$ is a Polish group, $X$ is a Polish space, and $\cdot: G\acts X$ is a Borel action, we define the {\em $G$-orbit equivalence relation} $E^X_G$ on $X$ by $xE^X_Gx'\iff \exists g\in G\ g\cdot x=x'$. By a theorem of Becker and Kechris (\cite{BK}; also c.f. \cite{GaoBook} Theorem 3.3.4), for any Polish group $G$ there exists a $G$-orbit equivalence relation $E^X_G$ such that for any other $G$-orbit equivalence relation $E^Y_G$, we have $E^Y_G\leq_B E^X_G$. Such an orbit equivalence relation is called a {\em universal} $G$-orbit equivalence relation. Clearly a Polish group $G$ is tame if and only if a universal $G$-orbit equivalence relation is Borel. 

If $G$ is a Polish group and $H$ is a closed subgroup of $G$, then by a theorem of Hjorth and Mackey (c.f. \cite{GaoBook} Theorem 3.5.2) a universal $H$-orbit equivalence relation is Borel reducible to a universal $G$-orbit equivalence relation. Similarly if $G$ is a Polish group and $H$ is a topological quotient group, that is, $H$ is a continuous homomorphic image of $G$. Combining these, we get that if $G$ involves $H$, i.e., if there is a continuous surjective homomorphism from a closed subgroup of $G$ onto $H$, then a universal $H$-orbit equivalence relation is Borel reducible to a universal $G$-orbit equivalence relation. Thus if $G$ involves $H$ and $G$ is tame, then so is $H$.

Let $\mathcal{C}$ be a class of Polish groups closed under isomorphism. We say that a Polish group $G$ is {\em universal} in $\mathcal{C}$ if $G\in \mathcal{C}$ and any $H\in\mathcal{C}$ is isomorphic to a closed subgroup of $G$. Similarly, $G$ is {\em surjectively universal} in $\mathcal{C}$ if $G\in \mathcal{C}$ and any $H\in\mathcal{C}$ is isomorphic to a topological quotient group of $G$. Finally, $G$ is {\em weakly universal} in $\mathcal{C}$ if $G\in\mathcal{C}$ and any $H\in\mathcal{C}$ is involved in $G$. 

For instance, the class of all countable discrete abelian groups has universal elements as well as surjectively universal elements. To define them we fix the following notation for this paper.

Let $\omega$ and $\mathbb{N}$ denote the set of all natural numbers. We use them interchangeably, but tend to use $\omega$ as an index set and use $\mathbb{N}$ when some number theoretic properties of natural numbers are needed. In particular, we will use $\mathbb{N}_+$ to denote the set of all positive integers, and $\mathbb{P}\subseteq\mathbb{N}_+$ to denote the set of all primes. 

For a countable abelian group $A$ let $A^{<\omega}$ denote the direct sum of countably infinitely many copies of $A$, i.e., $\bigoplus_\omega A$. Let $\mathbb{Z}$ denote the additive group of all integers, $\mathbb{Q}$ denote the additive group of all rational numbers, and for each prime $p$, let $\mathbb{Z}(p^\infty)$ denote the {\em $p$-quasicyclic group}, i.e., $\mathbb{Z}(p^\infty)$ is the additive mod 1 group of
$$\left\{\frac{m}{p^k}\,:\, k\in\mathbb{N}_+,\ m\in \mathbb{N},\ 0\leq  m< p^k\right\}. $$
Then $\mathbb{Z}^{<\omega}$ is the free abelian group with countably infinitely many generators, and therefore is a surjectively universal countable discrete abelian group. The group 
$$ A_\infty=\bigoplus_{p\in\mathbb{P}} \mathbb{Z}(p^\infty)^{<\omega}\oplus \mathbb{Q}^{<\omega} $$
is a universal countable discrete abelian group (c.f. \cite{Ro} 4.1.5 and 4.1.6).

For any Polish group $\Gamma$ let $\Gamma^\omega$ denote the infinite product group $\prod_{\omega}\Gamma$. Combining the above observations with Proposition~\ref{prop:abel} (iii), we get that $(A_\infty)^\omega$ is a universal non-archimedean abelian Polish group, and $(\mathbb{Z}^{<\omega})^\omega$ is a weakly universal non-archimedean abelian Polish group. It was shown in \cite{G} and \cite{GX} that there are surjectively universal non-archimedean abelian Polish groups.



\subsection{Countable Borel equivalence relations and hyperfiniteness}
An Borel equivalence relation $E$ on a Polish space is {\em countable} if every $E$-equivalence class is countable. An equivalence relation $E$ is {\em essentially countable} if $E\leq_B F$ for some countable Borel equivalence relation $F$. If $\Gamma$ is a countable group and $\Gamma\acts X$ a Borel action, then $E^X_\Gamma$ is a countable Borel equivalence relation. By a theorem of Feldman and Moore (c.f. \cite{GaoBook} Theorem 7.1.4), any countable Borel equivalence relation on a Polish space $X$ is the orbit equivalence relation $E^X_\Gamma$ for some countable group $\Gamma$ and Borel action $\Gamma\acts X$. A theorem of Kechris (c.f. \cite{GaoBook} Theorem 7.5.2) states that any $G$-orbit equivalence relation, where $G$ is a locally compact Polish group, is essentially countable. In particular, any locally compact Polish group is tame.

The following dichotomy theorem of Hjorth and Kechris, which is called the Seventh Dichotomy Theorem (\cite{HK} Theorem 8.1), is relevant. It states that, if $G$ is any non-archimedean TSI Polish group and $E^X_G$ is a Borel $G$-orbit equivalence relation, then exactly one of the following is true for any $E\leq_B E^X_G$: either (I) $E$ is essentially countable, or (II) $E_0^\omega\leq_B E$.

An equivalence relation $F$ is {\em finite} if every $F$-equivalence class is finite. $F$ is {\em hyperfinite} if there is an increasing sequence $(F_n)_{n\in\omega}$ of finite Borel equivalence relations, i.e., $F_n\subseteq F_{n+1}$ for all $n\in\omega$, such that $F=\bigcup_n F_n$. By a theorem of Dougherty, Jackson and Kechris (c.f. \cite{GaoBook} Theorem 7.2.3) a countable Borel equivalence relation $E$ is hyperfinite if and only if $E\leq_B E_0$. We say that an equivalence relation $E$ is {\em essentially hyperfinite} if $E\leq_B E_0$. It is a theorem of Gao and Jackson \cite{GJ} that any $\Gamma$-orbit equivalence relation, where $\Gamma$ is a countable discrete abelian group, is hyperfinite. We will use this theorem several times in the proofs below.

It is wellknown that there are countable Borel equivalence relations that are not hyperfinite (c.f., e.g., \cite{GaoBook} Theorem 7.4.10).

\section{Group trees and coset trees}
Group trees and coset trees will play a key role in the proofs of our main theorems. In this section we give a development of these concepts and their properties. Most of the results in this section are straightforward consequences or generalizations of Solecki's results in \cite{solecki}. Nevertheless, we include some details here since there are some subtle differences between our approach and that of \cite{solecki} and because we will need in the subsequent sections certain results that are not explicitly stated in \cite{solecki}.

In this section we do not assume that the groups are abelian, and therefore we will use multiplicative notation for the group operations. For the rest of this section we fix a countably infinite sequence of countable discrete groups $(H_n)$ and let $G\leq \prod_n H_n$ be a closed subgroup. Let $e_H$ be the identity element of $\prod_n H_n$. For each $m\in \omega$ let $\pi_m: \prod_n H_n\to H_m$ be the projection map. 

To facilitate a descriptive set theoretic analysis of the situation, we think of $\prod_n H_n$ as the set of branches of a tree $T_H$. For each $n\in\omega$, the $n$-th level of the tree $T_H$ is the set 
$$H^n=H_0\times\cdots\times H_{n-1}. $$
Then
$$ T_H=\bigcup_{n\in\omega}H^n.$$
When $m<n$ we continue to denote by $\pi_m$ the projection map from $H^n$ to $H_m$. If $\sigma\in T_H$, the {\em length} of $\sigma$, denoted as $\lh(\sigma)$, is the unique $n\in\omega$ such that $\sigma\in H^n$. Note that technically $H^0=\emptyset$, and therefore $\lh(\sigma)>0$ for every $\sigma\in T_H$. If $\sigma=(\pi_0(\sigma), \dots, \pi_{n-1}(\sigma))\in H^n$ and $0<m<n=\lh(\sigma)$, then define $\sigma\!\upharpoonright\! m=(\pi_0(\sigma), \dots, \pi_{m-1}(\sigma))\in H^m$. For $\sigma, \tau\in T_H$, we write $\sigma\subseteq \tau$ if $\sigma=\tau\!\upharpoonright\! \lh(\sigma)$. 

For $x\in \prod_n H_n$ and $\sigma\in T_H$, we also write $\sigma\subseteq x$ if for all $n<\lh(\sigma)$, $\pi_n(\sigma)=\pi_n(x)$. For $n\in\mathbb{N}_+$ let $x\!\upharpoonright\! n$ be the unique $\sigma\in H^n$ such that $\sigma\subseteq x$.

A subset $S\subseteq T_H$ is a {\em tree} if for any $\sigma, \tau\in T_H$, whenever $\sigma\subseteq \tau$ and $\tau\in S$, we have $\sigma\in S$. $T_H$ is itself a tree. If $S\subseteq T_H$ is a tree, then we let $[S]$ denote the set of all {\em branches} of $S$, that is, 
$$[S]=\left\{ x\in \prod_n H_n\,:\, \forall n\in\N_+\ x\!\upharpoonright\! n\in S\right\}. $$
A tree $S$ is {\em wellfounded} if $[S]=\emptyset$; it is {\em illfounded} otherwise. We have $[T_H]=\prod_n H_n$. It is easy to see that a subset of $\prod_n H_n$ is closed if and only if it is the set of branches for some tree. Now that $G$ is a closed subgroup of $\prod_n H_n$, we denote
$$T_G=\{\sigma\in T_H\,:\,\exists x\in G\ \sigma\subseteq x\}.$$
Then it is easy to check that $T_G\subseteq T_H$ is a tree and $[T_G]=G$.

For any $m\in\mathbb{N}_+$, $H^m$ is obviously a group. Let $\pi^m: \prod_n H_n\to H^m$ be the projection map. $\pi^m$ is a group homomorphism. Thus $T_G\cap H^m=\pi^m[G]$ is a subgroup of $H^m$. In general, a tree $S\subseteq T_H$ is called  a {\em group tree} if $S\cap H^m$ is a subgroup of $H^m$ for all $m\in\mathbb{N}_+$. If $S$ is a group tree, then in particular $S\cap H^m\neq\emptyset$ for all $m\in\mathbb{N}_+$, and therefore $\pi^m(e_H)\in S$ for all $m\in\mathbb{N}_+$. It follows that for any group tree $S$, $e_H\in[S]$ and therefore $[S]\neq\emptyset$. Thus group trees are always illfounded.

A tree $S\subseteq T_H$ is a {\em coset tree} if for all $m\in\mathbb{N}_+$, whenever $S\cap H^m\neq\emptyset$, we have that $S\cap H^m$ is a left coset of a subgroup of $H^m$. Note that $S\cap H^m$ is a left coset if and only if, whenever $\sigma_1, \sigma_2, \sigma_3\in S\cap H^m$, we have $\sigma_1\sigma_2^{-1}\sigma_3\in S$. When $S\cap H^m$ is a left coset of a subgroup of $H^m$, the subgroup is uniquely determined: in fact it is $\sigma^{-1}(S\cap H^m)$ for any $\sigma\in S\cap H^m$. This induces a canonical way to produce a group tree from any coset tree. For any coset tree $S\subseteq T_H$, let
$$ \Gamma(S)=\bigcup_{n\in\mathbb{N}_+} \Gamma^n $$
where $\Gamma^n=\sigma_n^{-1}(S\cap H^n)$ if $\sigma_n\in S\cap H^n\neq\emptyset$, and $\Gamma^n=\{\pi^n(e_H)\}$ if $S\cap H^n=\emptyset$. Then $\Gamma(S)$ is the canonical group tree corresponding to $S$. Moreover, if $S\subseteq T_G$, then $\Gamma(S)\subseteq T_G$. A group tree is also a coset tree. Conversely, a coset tree $S$ is a group tree exactly when $\Gamma(S)=S$. In contrast with group trees, coset trees can be wellfounded. 

A crucial concept about coset trees and group trees is that of their heights. For any tree $S\subseteq T_H$, define the {\em derivative} of $S$ to be
$$ D(S)=\{\sigma\in S\,:\, \exists \tau\in S\ (\sigma\subseteq \tau\mbox{ and } \lh(\sigma)<\lh(\tau))\}. $$
Then $D(S)\subseteq S$ is a subtree of $S$. For any ordinal $\alpha$, we defined the {\em $\alpha$-th derivative} of $S$ by transfinite induction:
$$ D^0(S)=S; \ D^{\alpha+1}(S)=D(D^\alpha(S)); \ D^{\lambda}(S)=\bigcap_{\alpha<\lambda}D^{\alpha}(S), $$
if $\lambda$ is a limit ordinal. The {\em height} of $S$, denoted $\het(S)$, is the least ordinal $\alpha$ such that $D^{\alpha+1}(S)=D^{\alpha}(S)$. Also denote $D^\infty(S)=D^{\het(S)}(S)$. If $S=\emptyset$, then technically $S$ is a tree and $\het(S)=0$. But $\het(T_H)=\het(T_G)=0$ as well. By definition, when $\alpha<\het(S)$, there exists $\sigma\in D^{\alpha}(S)\setminus D^{\alpha+1}(S)$. Since $T_H$ is countable, all trees we deal with in this paper are countable, and therefore their heights are countable ordinals. Thus for all trees $S\subseteq T_H$, $D^\infty(S)=D^{\omega_1}(S)$. If $S\subseteq T_H$ and $\sigma\in S$, the {\em rank} of $\sigma$ in $S$, denoted $r_S(\sigma)$, is the least ordinal $\alpha$ such that $\sigma\in D^{\alpha}(S)\setminus D^{\alpha+1}(S)$, if such an $\alpha$ exists, and is defined to be $\omega_1$ if $\sigma\in D^{\infty}(S)$. For simplicity we denote the rank of $\sigma$ by $r(\sigma)$ whenever the ambient tree $S$ is clear from the context.


The following two lemmas explore the relationship between ranks of group trees and coset trees.

\begin{lemma}\label{lem:c2g} Let $S\subseteq T_H$ be a coset tree. Then $\het(S)<\het(\Gamma(S))+\omega$.
\end{lemma}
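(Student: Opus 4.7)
The plan is to analyze the iterated derivatives of $S$ and $\Gamma(S)$ in parallel and to show that after $\het(\Gamma(S))$ derivatives the tree $S$ collapses to something of finite height.

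First, by transfinite induction on $\alpha$, I would prove that $D^\alpha(S)$ is a coset tree and, whenever $D^\alpha(S)\cap H^n\neq\emptyset$, is a left coset of the subgroup $D^\alpha(\Gamma(S))\cap H^n$ of $H^n$. The successor step uses the identity $D(S')\cap H^n=\pi^n(S'\cap H^{n+1})$ together with the fact that projections send left cosets to left cosets and subgroups to subgroups; in the subcase $S'\cap H^{n+1}=\emptyset$, both sides collapse consistently with the convention used in the definition of $\Gamma(\cdot)$. The limit step $\alpha=\lambda$ uses the elementary fact that a descending intersection of left cosets of a descending chain of subgroups of a fixed ambient group is either empty or a single left coset of the intersection of those subgroups.

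Writing $\beta=\het(\Gamma(S))$, $\tilde S=D^\beta(S)$, and $\tilde\Gamma=D^\beta(\Gamma(S))$, I then split into two cases according to whether $\tilde S\cap H^n$ is eventually empty. If $\tilde S\cap H^n\neq\emptyset$ for every $n\in\N_+$, then $\tilde\Gamma\cap H^{n+1}$ is the genuine subgroup of which $\tilde S\cap H^{n+1}$ is a coset, and $D(\tilde\Gamma)=\tilde\Gamma$ unwinds to the surjectivity $\pi^n(\tilde\Gamma\cap H^{n+1})=\tilde\Gamma\cap H^n$; a short calculation using this surjectivity extends any $\sigma\in\tilde S\cap H^n$ to a node of $\tilde S\cap H^{n+1}$ (write $\sigma=\sigma_n\gamma$ for a representative $\sigma_n$ of $\tilde S\cap H^n$ and $\gamma\in\tilde\Gamma\cap H^n$, lift $(\sigma_{n+1}\!\upharpoonright\! n)^{-1}\sigma\in\tilde\Gamma\cap H^n$ to some $\tau'\in\tilde\Gamma\cap H^{n+1}$, and take $\sigma_{n+1}\tau'$), giving $D(\tilde S)=\tilde S$ and hence $\het(\tilde S)=0$. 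Otherwise there is a least $N\in\N_+$ with $\tilde S\cap H^N=\emptyset$, so $\tilde S$ has no nodes of length at least $N$, forcing $\het(\tilde S)\leq N-1<\omega$.

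Because $D^{\beta+\gamma}(S)=D^\gamma(\tilde S)$ for every ordinal $\gamma$, the stabilization of $D^\gamma(\tilde S)$ at $\gamma=\het(\tilde S)<\omega$ yields $\het(S)\leq\beta+\het(\tilde S)<\beta+\omega=\het(\Gamma(S))+\omega$, as required. The main obstacle is the limit step of the induction: a descending chain of nonempty cosets can intersect to the empty set even when the corresponding descending chain of subgroups intersects to a nontrivial subgroup, so the naive equality $\Gamma(D^\lambda(S))=D^\lambda(\Gamma(S))$ can genuinely fail. This is reconciled by the convention $\Gamma^n=\{\pi^n(e_H)\}$ on empty levels together with the observation that once $D^\alpha(S)\cap H^n$ becomes empty it remains empty for all larger $\alpha$; thus such discrepancies never inflate the height of $S$ beyond $\het(\Gamma(S))+\omega$.
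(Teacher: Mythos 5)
Your argument is correct. The paper does not write out a proof but simply cites Solecki's Lemma~6, and your approach---proving by transfinite induction that each level of $D^\alpha(S)$, when nonempty, is a left coset of the corresponding level of $D^\alpha(\Gamma(S))$, and then showing that $D^{\het(\Gamma(S))}(S)$ is either a pruned tree or dies out at some finite level---is precisely the natural unwinding of that cited argument, with the limit-stage subtlety (cosets can vanish while subgroups persist) correctly handled by restricting the induction claim to nonempty levels.
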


\begin{proof} The proof is implicit in \cite{solecki}, in the proof of Lemma 6 (ii)$\Rightarrow$(iii).
\end{proof}

\begin{lemma}\label{lem:g2c} Let $S\subseteq T_G$ be a group tree with $\het(S)>\omega$. Then there exists a sequence $(\sigma_m)$ of elements of $T_G$ such that $(\sigma_{m+1}\!\upharpoonright\!m)\sigma_m\in S$ for all $m\in\mathbb{N}_+$ and so that $\bigcup_m \sigma_m(S\cap H^m)$ is a wellfounded coset tree with height $<\omega\cdot 2$.
\end{lemma}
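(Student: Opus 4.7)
The plan is to locate a node $\tau$ of $S$-rank exactly $\omega$ and use it as the pivot of the construction. Since $\het(S)>\omega$, the iterated derivative $D^\omega(S)$ has positive height, so $D^{\omega+1}(S)\subsetneq D^\omega(S)$. I would choose $\tau\in D^\omega(S)\setminus D^{\omega+1}(S)$ and let $M=\lh(\tau)$; by definition $r_S(\tau)=\omega$, meaning $\tau$ has extensions in $S$ of every finite $S$-rank but no extension of $S$-rank $\geq\omega$.

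For $1\leq m\leq M$ I set $\sigma_m=\tau\!\upharpoonright\! m$. Then $\sigma_m\in S\cap H^m\subseteq T_G$, the condition $(\sigma_{m+1}\!\upharpoonright\! m)\sigma_m\in S$ holds because both factors lie in the subgroup $S\cap H^m$, and the cosets $\sigma_m(S\cap H^m)$ coincide with $S\cap H^m$ in this range. For $m>M$, I construct $\sigma_m\in T_G\cap H^m$ by induction: $\sigma_m$ is picked inside the coset of $S\cap H^m$ prescribed by the compatibility condition (which forces $\sigma_{m+1}\!\upharpoonright\! m$ into a determined coset relative to $\sigma_m$), and additionally so that $\sigma_m$ serves as a witness of an extension of $\tau$ whose $S$-rank is a controlled finite value. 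The existence of such witnesses at every level $m>M$ is what the assumption $r_S(\tau)=\omega$ provides, together with the elementary fact that truncating a high-rank witness cannot decrease its rank.

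Three conclusions remain to verify. The coset-tree property of $U=\bigcup_m\sigma_m(S\cap H^m)$ follows from the standard calculation: if $\rho\in\sigma_{m+1}(S\cap H^{m+1})$ then $\rho\!\upharpoonright\! m=(\sigma_{m+1}\!\upharpoonright\! m)(h\!\upharpoonright\! m)$ for some $h\in S\cap H^{m+1}$, and by the compatibility condition this belongs to $\sigma_m(S\cap H^m)$. For wellfoundedness, a branch $b\in[U]$ would, after unwinding the compatibility relations, translate to an extension of $\tau$ in $S$ whose truncations attain arbitrarily large finite $S$-rank, producing a node of $S$-rank $\geq\omega$ extending $\tau$ and contradicting $\tau\notin D^{\omega+1}(S)$. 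For the height bound, the portion of $U$ at levels $m>M$ has $U$-ranks bounded by a single $\omega$ (since its nodes are translates of nodes of only finite $S$-rank), while the levels $m\leq M$ contribute at most $M$ further successor-steps of the rank function, giving $\het(U)\leq\omega+M<\omega\cdot 2$.

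The principal obstacle is the inductive step for $m>M$: simultaneously I must ensure that $\sigma_m\in T_G$, that $\sigma_m$ lies in the forced coset of $S\cap H^m$, and that $\sigma_m$ encodes a finite-rank witness of increasing quality as $m$ grows. This is a careful coset-theoretic bookkeeping in the spirit of \cite{solecki}, and it is also the delicate point where one must confirm that the $U$-ranks at levels $m>M$ stay uniformly bounded by $\omega$ (not merely finite at each individual level), so that the composed height remains below $\omega\cdot 2$.
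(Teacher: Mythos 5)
Your entry point is the right one and matches the flavor of Solecki's Lemma 5, which is what the paper's own proof defers to: pick $\tau$ with $r_S(\tau)=\omega$, let $M=\lh(\tau)$, and build the shift around $\tau$. But the remainder of the proposal has gaps that go beyond ``careful bookkeeping.''

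First, the height-bound argument rests on a claim that is not true as stated. You write that at levels $m>M$ the nodes of $U=\bigcup_m\sigma_m(S\cap H^m)$ ``are translates of nodes of only finite $S$-rank.'' But $U\cap H^m=\sigma_m(S\cap H^m)$ is a translate of the \emph{entire} $m$-th level of $S$, which contains elements of every rank, including elements of rank $\omega_1$ (e.g.\ $\pi^m(e_H)$, which lies on the identity branch of $S$). Nothing in your construction restricts $\sigma_m^{-1}\rho$ to the extensions of $\tau$. Moreover, even where $\sigma_m^{-1}\rho$ does have finite $S$-rank, this does not immediately bound $r_U(\rho)$: the tree structure of $U$ is \emph{not} a single translate of that of $S$, since the shifting coset changes with $m$ (and must change, otherwise $U$ would be a coherent translate of $S$ and hence illfounded). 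Comparing $U$-ranks to $S$-ranks requires tracking how the level-by-level shifts interact, and your argument does not do this.

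Second, the wellfoundedness argument is not valid as written. If $b\in[U]$, then $c_m:=\sigma_m^{-1}(b\!\upharpoonright\!m)\in S\cap H^m$ for each $m$, but the sequence $(c_m)$ is generally \emph{not} coherent: $c_{m+1}\!\upharpoonright\!m$ differs from $c_m$ by the discrepancy $(\sigma_{m+1}\!\upharpoonright\!m)^{-1}\sigma_m$, which is a nontrivial element of $S\cap H^m$ precisely because the $\sigma_m$'s cannot all come from a single element of $G$. So a branch of $U$ does not ``unwind'' to an extension of $\tau$ in $S$ without a substantive argument about how the discrepancies accumulate. This is exactly the part of Solecki's construction where a specific, explicit choice of the $\sigma_m$'s (not just ``any witness in the forced coset'') is needed, so that a hypothetical branch of $U$ can be converted into a forbidden rank-$\geq\omega$ node above $\tau$.

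In short: the choice of $\tau$ is right, the compatibility/coset-tree verification is routine and fine, but the construction of $\sigma_m$ for $m>M$ is left genuinely unspecified (``encodes a finite-rank witness'' does not determine it), and both the wellfoundedness and the $<\omega\cdot 2$ bound depend on that unspecified choice in ways your sketch does not address. The paper sidesteps all of this by citing Solecki's Lemma 5 verbatim and only noting that the elements there can be taken from $T_G$; to make your version self-contained, you would need to write out that construction and then prove the two quoted claims correctly.
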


\begin{proof}
This is essentially Lemma 5 of \cite{solecki} except that here we require the elements to be selected from the tree $T_G$. To prove this lemma just repeat the proof of \cite{solecki} Lemma 5 and note that all elements constructed in that proof can be taken from $T_G$. 
\end{proof}

The following proposition characterizes tameness of $G$ in terms of ranks of group trees and coset trees.

\begin{proposition}\label{prop:tamebytrees}
 The following are equivalent:
\begin{enumerate}
\item[(0)] $G$ is tame.
\item[(1)] There is $\alpha<\omega_1$ such that for any wellfounded coset tree $S\subseteq T_G$, we have $\het(S)\leq\alpha$.
\item[(2)] There is $\alpha<\omega_1$ such that for any coset tree $S\subseteq T_G$, we have $\het(S)\leq\alpha$.
\item[(3)] There is $\alpha<\omega_1$ such that for any group tree $S\subseteq T_G$, we have $\het(S)\leq\alpha$.
\end{enumerate}
\end{proposition}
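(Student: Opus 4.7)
The plan is to close the cycle of implications $(0) \Rightarrow (3) \Rightarrow (2) \Rightarrow (1) \Rightarrow (0)$, using the two preceding lemmas to move among the combinatorial conditions (1), (2), (3), and the Becker--Kechris realization of the universal $G$-action to bridge to tameness.

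For $(3) \Rightarrow (2)$, I would apply Lemma~\ref{lem:c2g} directly: if $\alpha$ bounds all group-tree heights in $T_G$, then for any coset tree $S \subseteq T_G$ the canonical group tree $\Gamma(S)$ lies in $T_G$, so $\het(\Gamma(S)) \le \alpha$ and hence $\het(S) < \alpha + \omega < \omega_1$, which is still a countable ordinal bound. The implication $(2) \Rightarrow (1)$ is trivial, since wellfounded coset trees form a subclass of coset trees.

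For $(1) \Rightarrow (3)$ I would argue contrapositively, using Lemma~\ref{lem:g2c} as the main tool. Assuming group-tree heights are unbounded below $\omega_1$, I select group trees $S \subseteq T_G$ with $\het(S) > \omega$ and arbitrarily large, then apply Lemma~\ref{lem:g2c} to each. By amalgamating the resulting wellfounded coset trees along disjoint stems in $T_G$ — so that distinct applications contribute at distinct initial segments of branches, and so that the coset condition is inherited at every level — one builds wellfounded coset trees whose heights grow without bound, contradicting (1).

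The equivalence of the tree-theoretic conditions with tameness rests on the realization of the universal $G$-orbit equivalence relation as an equivalence on a Polish space whose points encode coset trees in $T_G$, with the Borel complexity of orbit membership controlled by the ordinal ranks of these trees. A countable bound on such ranks translates into a Borel formula for the universal orbit equivalence, giving $(1) \Rightarrow (0)$; conversely, tameness together with the boundedness theorem for ranks of analytic sets forces a uniform countable bound, giving $(0) \Rightarrow (3)$. The main obstacle is the amalgamation step in $(1) \Rightarrow (3)$: Lemma~\ref{lem:g2c} on its own yields wellfounded coset trees of modest height, so converting arbitrarily tall group trees into arbitrarily tall wellfounded coset trees requires a careful threading of many local applications of the lemma along distinct stems while simultaneously maintaining the coset condition at every level and keeping the resulting tree inside $T_G$.
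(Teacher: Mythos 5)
Your implications $(3)\Rightarrow(2)$ via Lemma~\ref{lem:c2g}, $(2)\Rightarrow(1)$ trivially, and $(1)\Rightarrow(0)$ via the Becker--Kechris realization of the universal orbit equivalence on $\mathcal{T}^\omega$ are all in line with the paper. The genuine gap is in how you propose to return from tameness to a rank bound on group trees; there are two distinct problems.

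First, the amalgamation plan for $(1)\Rightarrow(3)$ cannot work. Lemma~\ref{lem:g2c} produces a wellfounded coset tree of height strictly less than $\omega\cdot 2$, \emph{independently} of how large the height of the input group tree is. If you glue countably many such trees along distinct finite stems in $T_G$, the height of the amalgam is still at most $\omega\cdot 2$ plus a finite or $\omega$ correction coming from the stem lengths, so it is bounded by $\omega\cdot 3$. You cannot manufacture wellfounded coset trees of unbounded height this way, no matter how carefully you thread the construction. You flag this as ``the main obstacle,'' but it is not an obstacle that threading can overcome; the lemma is simply the wrong device for producing tall wellfounded coset trees from tall group trees. (A second, minor issue: gluing coset trees at disjoint stems does not automatically yield a coset tree; the coset condition at a level is a global constraint on the whole level, not a per-stem condition.)

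Second, your sketch of $(0)\Rightarrow(3)$ ``via the boundedness theorem'' cannot apply directly to group trees, which are always illfounded. Boundedness applies to the $\Pi^1_1$-rank on the wellfounded coset trees $\Phi(S,S')$, and this is exactly how the paper gets $(0)\Rightarrow(1)$: the map $(S,S')\mapsto\het(\Phi(S,S'))$ is a $\mathbf{\Pi}^1_1$-rank on the complement of the orbit equivalence relation, whose Borelness forces a countable bound. To pass from a bound on wellfounded coset trees to a bound on all coset trees (hence on group trees), the clean mechanism is this: if $T\subseteq T_G$ is any coset tree and $\sigma\in T$ has $r_T(\sigma)=\gamma<\omega_1$, then the restriction $T_\sigma=\{\tau\in T:\tau\subseteq\sigma\text{ or }\sigma\subseteq\tau\}$ is again a coset tree contained in $T_G$, it is wellfounded (since $\sigma$ lies on no branch of $T$), and $\het(T_\sigma)\geq r_{T_\sigma}(\sigma)+1=\gamma+1$. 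Since $\het(T)=\sup\{r_T(\sigma)+1:\sigma\in T,\ r_T(\sigma)<\omega_1\}$, a bound $\alpha$ as in $(1)$ forces $\het(T)\leq\alpha+1$, giving $(1)\Rightarrow(2)$, from which $(1)\Rightarrow(3)$ is immediate. This replaces your amalgamation step entirely. (The paper in fact proves $(0)\Leftrightarrow(1)$ in the manner sketched above, using the space $\mathcal{T}$ of subtrees of $T_G$ and the coset trees $\Phi(S,S')$ and $\Psi((S_n),(S'_n))$, and then appeals to the machinery of Solecki's Lemma~6 with Lemma~\ref{lem:g2c} replacing Solecki's Lemma~5 for the remaining implication; the details of that reference aside, the $T_\sigma$ device above is what actually closes the cycle.)
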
 

\begin{proof} The proof of (0)$\Leftrightarrow$(1) is essentially the same as the proof of Lemma 2 of \cite{solecki}. For (0)$\Rightarrow$(1) it suffices to consider the action of $G$ on the space $\mathcal{T}$ of all trees $S\subseteq T_G$. Let $E$ be the $G$-orbit equivalence relation on $\mathcal{T}$. For any trees $S, S'\subseteq T_G$, consider
$$ \Phi(S, S')=\{\sigma\in T_H\,:\, \forall 0<m\leq\lh(\sigma)\ (\sigma\!\upharpoonright\! m)(S\cap H^m)=S'\cap H^m\}. $$
Then $\Phi(S, S')\subseteq T_G$ is a coset tree. $\Phi(S, S')$ is illfounded if and only if $S$ and $S'$ are in the same $G$-orbit, i.e., $(S, S')\in E$. In general, $\het(\Phi(S, S'))$ is a ${\bf\Pi}^1_1$ norm on the $(\mathcal{T}\times\mathcal{T})\setminus E$. If $G$ is tame then $E$ is Borel, and by boundedness we get an $\alpha<\omega_1$ such that $\het(\Phi(S, S'))\leq\alpha$ whenever $\Phi(S,S')$ is wellfounded. Finally note that for any coset tree $S$, $\Phi(\Gamma(S), S)=S$. Thus if $S\subseteq T_G$ is a wellfounded coset tree, then $\het(S)\leq\alpha$.

For (1)$\Rightarrow$(0) we use a theorem of Becker and Kechris (c.f. \cite{GaoBook} Theorem 3.3.4) which states that the $G$-orbit equivalence relation on $\mathcal{T}^\omega$ is a universal $G$-orbit equivalence relation. Let $F$ be the $G$-orbit equivalence relation on $\mathcal{T}^\omega$. For two sequences of trees $(S_n), (S'_n)\in\mathcal{T}^\omega$, let
$$ \Psi((S_n), (S'_n))=\bigcap_n \Phi(S_n, S'_n). $$
Then $\Psi((S_n), (S'_n))\subseteq T_G$ is a coset tree. $\Psi((S_n), (S'_n))$ is illfounded if and only if $((S_n), (S'_n))\in F$. For any $\alpha<\omega_1$, the set
$$\{S\in \mathcal{T}\,:\, \mbox{$S$ is a wellfounded and $\het(S)\leq\alpha$}\}$$ is Borel. Thus if there is an $\alpha<\omega_1$ such that $\het(S)\leq\alpha$ for all wellfounded coset trees $S$, then $F$ is Borel, and therefore $G$ is tame.

The proof of the equivalence of (1)--(3) is essentially the same as the proof of Lemma 6 of \cite{solecki}. (2)$\Rightarrow$(1) is trivial. For (3)$\Rightarrow$(2), use Lemma~\ref{lem:c2g} above and note that if $S\subseteq T_G$ then $\Gamma(S)\subseteq T_G$. For (2)$\Rightarrow$(3) we argue as in the proof of \cite{solecki} Lemma 6 (iii)$\Rightarrow$(ii). When Lemma 5 of \cite{solecki} is applied in that proof replace it by  Lemma~\ref{lem:g2c}. The resulting wellfounded coset tree is now a subtree of $T_G$.
\end{proof}

\section{Characterizing tameness}
Most groups in this section are assumed to be abelian. We will use the additive notation for such groups. The identity element will be denoted $0$ (with subscripts when necessary).

We will make use of the concept of $p$-compactness defined by Solecki in \cite{solecki}. Let $p$ be a prime. A group $\Gamma$ is called {\em $p$-compact} if for any decreasing sequence of groups $(G_n)$ such that for each $n\in\omega$, $G_n\leq \mathbb{Z}(p)\times \Gamma$ with $\pi[G_n]=\mathbb{Z}(p)$, where $\pi: \mathbb{Z}(p)\times\Gamma\to \mathbb{Z}(p)$ is the projection map, we have $\pi[\bigcap_n G_n]=\mathbb{Z}(p)$.

The concept of $p$-compactness makes sense for arbitrary groups, but little is known about it for nonabelian groups. Solecki gave several equivalent formulations of the concept in the abelian case, as follows.

\begin{proposition}[Solecki \cite{solecki}]\label{soleckipcompact} Let $p$ be a prime. Let $\Gamma$ be a countable abelian group. Then the following are equivalent:
\begin{enumerate}
\item[\rm (i)] $\Gamma$ is $p$-compact.
\item[\rm (ii)] $\Gamma$ is torsion, and the $p$-component of $\Gamma$ satisfies the minimal condition, i.e., there is no infinite strictly decreasing sequence of subgroups of $\Gamma$.
\item[\rm (iii)] $\Gamma$ is torsion, and the $p$-component of $\Gamma$ is of the form $F\oplus \mathbb{Z}(p^\infty)^k$, where $F$ is a finite $p$-group and $k\in\omega$.
\item[\rm (iv)] $\Gamma$ is torsion, and for any finite $p$-group $F\leq \Gamma$ the $p$-rank of $\Gamma/F$ is finite.
\item[\rm (v)] $\Gamma$ is torsion, and $\Gamma$ does not involve $\mathbb{Z}(p)^{<\omega}$.
\end{enumerate}
\end{proposition}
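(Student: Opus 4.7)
This proposition is attributed to Solecki, so the plan is to follow his chain of equivalences. I would organize them into the cycle (i) $\Rightarrow$ (v), (v) $\Rightarrow$ (ii), (ii) $\Leftrightarrow$ (iii) $\Rightarrow$ (iv) $\Rightarrow$ (v), and (ii) $\Rightarrow$ (i), gluing everything together with the structure theory of countable abelian $p$-groups.

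The equivalence (ii) $\Leftrightarrow$ (iii) is the standard description of countable abelian $p$-groups satisfying the minimal condition (i.e.\ DCC on subgroups). The ``if'' direction is immediate since $\mathbb{Z}(p^\infty)$ has DCC and finite direct sums inherit it. For ``only if,'' I would split the $p$-component of $\Gamma$ as $D\oplus R$ with $D$ divisible and $R$ reduced. DCC on $D$ forces $D\cong\mathbb{Z}(p^\infty)^k$ with $k<\infty$, and on $R$ it forces the chain $R\supseteq pR\supseteq p^2R\supseteq\cdots$ to stabilize; combined with $\bigcap_n p^nR=0$ (from reduction), this gives $p^NR=0$ for some $N$, and then Pr\"ufer--Kulikov makes $R$ a direct sum of cyclic $p$-groups, of which DCC leaves only finitely many, so $R$ is finite. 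Then (iii) $\Rightarrow$ (iv) is bookkeeping on the $p$-socle: killing a finite $p$-group $F$ changes $\dim_{\mathbb{Z}(p)}\Gamma[p]$ by at most $\dim_{\mathbb{Z}(p)}F[p]$.

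For (iv) $\Rightarrow$ (v) I argue contrapositively: given a subgroup $H\leq\Gamma$ surjecting onto $\mathbb{Z}(p)^{<\omega}$, primary decomposition of $H$, together with the fact that no $q$-component for $q\neq p$ can map nontrivially into the $p$-group $\mathbb{Z}(p)^{<\omega}$, lets me replace $H$ by its $p$-component. Taking $F=0$ in (iv) gives $\Gamma$ (hence $H$) finite $p$-rank, so (iii) yields $H\cong F_0\oplus\mathbb{Z}(p^\infty)^k$ with $F_0$ finite; since $\mathbb{Z}(p)^{<\omega}$ is reduced, the divisible summand maps to $0$, forcing the finite $F_0$ to surject onto the infinite $\mathbb{Z}(p)^{<\omega}$, a contradiction. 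Conversely, for (v) $\Rightarrow$ (ii), non-torsion $\Gamma$ trivially violates (v), and if $\Gamma$ is torsion but $\Gamma_p$ fails DCC then the structural analysis of (ii) $\Leftrightarrow$ (iii) shows $\Gamma_p$ has infinite $p$-rank, so $\Gamma_p[p]\leq\Gamma$ is a copy of $\mathbb{Z}(p)^{<\omega}$ sitting inside $\Gamma$.

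The main obstacle is the link to $p$-compactness, (i) $\Leftrightarrow$ (ii). For (i) $\Rightarrow$ (v) I would exhibit explicit witnesses that $\mathbb{Z}$ and $\mathbb{Z}(p)^{<\omega}$ fail $p$-compactness---for $\mathbb{Z}$, the chain $G_n=\langle(1,c_n)\rangle+\{0\}\times p^n\mathbb{Z}\leq\mathbb{Z}(p)\oplus\mathbb{Z}$, with $(c_n)$ integers that are $p$-adically Cauchy to a limit outside $\mathbb{Z}$, satisfies $\pi[\bigcap_n G_n]=\{0\}$---and then invoke the easy fact that $p$-compactness is inherited by closed subgroups and continuous quotients, so involving any non-$p$-compact group destroys $p$-compactness. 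Combined with (v) $\Rightarrow$ (ii) this gives (i) $\Rightarrow$ (ii). For the converse (ii) $\Rightarrow$ (i), given a decreasing chain $(G_n)$ in $\mathbb{Z}(p)\times\Gamma$ with $\pi[G_n]=\mathbb{Z}(p)$, I lift $1$ to $\gamma_n$ with $(1,\gamma_n)\in G_n$; setting $K_n=G_n\cap(\{0\}\times\Gamma)$, the images $\gamma_n+K_n$ form a coherent thread in $\varprojlim_n\Gamma/K_n$, and the structure (iii) pins the ``obstruction'' to the $p$-primary part $F_0\oplus\mathbb{Z}(p^\infty)^k$, which is profinite in the relevant sense. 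A diagonal/compactness argument then extracts a common lift $\gamma\in\bigcap_n G_n$.
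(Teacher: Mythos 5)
The paper does not give a proof of Proposition~\ref{soleckipcompact}; it is stated as a citation to Solecki, so there is no in-paper argument to compare your outline against, and it must stand on its own. Your overall route via the structure theory of countable abelian $p$-groups is the natural one, and your treatment of (ii)$\Leftrightarrow$(iii) is sound. But your witness that $\mathbb{Z}$ is not $p$-compact is wrong. With $G_n=\langle(1,c_n)\rangle+\{0\}\times p^n\mathbb{Z}$ and $c_n$ $p$-adically Cauchy with limit $c\in\mathbb{Z}_p\setminus\mathbb{Z}$, let $v$ be the $p$-adic valuation of $c$ (finite, since $c\neq 0$). For $n>v$, $\langle(1,c_n)\rangle$ already contains $(0,pc_n)$, so $G_n\cap(\{0\}\times\mathbb{Z})=\{0\}\times\gcd(pc_n,p^n)\mathbb{Z}=\{0\}\times p^{v+1}\mathbb{Z}$, and $(1,x)\in G_n$ merely requires $x\equiv c\pmod{p^{v+1}}$. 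Any integer $x$ in that residue class then lies in $\bigcap_nG_n$, so $\pi[\bigcap_nG_n]=\mathbb{Z}(p)$ after all. A witness that does work is $G_n=\langle(1,(1+p)^n)\rangle$: the chain is decreasing because $(1,(1+p)^{n+1})=(1+p)\cdot(1,(1+p)^n)$, while $(1,x)\in G_n$ forces $x=j(1+p)^n$ for some integer $j\equiv 1\pmod p$, which is impossible simultaneously for all $n$.

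Separately, both your (iv)$\Rightarrow$(v) and your (v)$\Rightarrow$(ii) quietly invoke the fact that a countable abelian $p$-group with finite socle has the form $F\oplus\mathbb{Z}(p^\infty)^k$ (equivalently, satisfies DCC); you write ``so (iii) yields $H\cong F_0\oplus\mathbb{Z}(p^\infty)^k$'' and ``the structural analysis of (ii)$\Leftrightarrow$(iii) shows $\Gamma_p$ has infinite $p$-rank.'' But your (ii)$\Leftrightarrow$(iii) starts from DCC, not from a finite socle, so at that point in the cycle neither claim is available. This is exactly the nontrivial implication (vi)$\Rightarrow$(ii) of Lemma~\ref{compact}, whose proof in the paper genuinely needs Kulikov's theorem on basic subgroups; you need to insert that argument rather than cite (iii) for a subgroup. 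Finally, the (ii)$\Rightarrow$(i) sketch (``diagonal/compactness,'' ``profinite in the relevant sense'') is too vague to verify and seems harder than needed: pass to $G_n'=G_n\cap(\mathbb{Z}(p)\times\Gamma_p)$, which still maps onto $\mathbb{Z}(p)$ because multiplying $(1,\gamma_n)$ by an integer $\equiv 1\pmod p$ annihilating the prime-to-$p$ part of $\gamma_n$ stays inside $G_n$; then $\mathbb{Z}(p)\times\Gamma_p$ satisfies the minimal condition by (ii), the chain $(G_n')$ stabilizes, and $\pi[\bigcap_nG_n]\supseteq\pi[\bigcap_nG_n']=\mathbb{Z}(p)$.
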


The main result of \cite{solecki} is the following theorem.

\begin{theorem}[Solecki \cite{solecki}] \label{soleckimain} Let $(H_n)$ be a sequence of countable discrete abelian groups. Then $\prod_n H_n$ is tame if and only if for each prime $p$, for all but finitely many $n$, $H_n$ is $p$-compact.
\end{theorem}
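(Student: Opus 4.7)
The plan is to apply Proposition~\ref{prop:tamebytrees}, which reduces tameness of $G=\prod_n H_n$ to finding a uniform countable ordinal bound $\alpha<\omega_1$ on heights of wellfounded coset trees in $T_G$ (equivalently, on heights of group trees in $T_G$). The two directions are then handled separately.

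For the forward direction (only if), suppose some prime $p$ has $I=\{n:H_n\text{ is not }p\text{-compact}\}$ infinite. By Proposition~\ref{soleckipcompact}(v), for each $n\in I$ either $H_n$ is non-torsion or $H_n$ involves $\Z(p)^{<\omega}$. Passing to a subsequence, one of these alternatives holds for every $n\in I$. In the non-torsion case I would pick $a_n\in H_n$ of infinite order to produce a closed embedding of $\Z^\omega$ into a closed subgroup of $G$; in the other case I would take closed subgroups $K_n\le H_n$ admitting continuous surjections onto $\Z(p)^{<\omega}$ to realize $(\Z(p)^{<\omega})^\omega$ as a topological quotient of a closed subgroup of $G$. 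It then suffices to exhibit, inside $T_{\Z^\omega}$ or $T_{(\Z(p)^{<\omega})^\omega}$, wellfounded coset trees of arbitrarily large countable rank, and to pull them back to $T_G$ through the involvement; this violates clause (1) of Proposition~\ref{prop:tamebytrees}. Concrete such trees on $\Z^\omega$ can be built using $q^\alpha$-divisibility conditions on coordinates for a fixed prime $q$, while on $(\Z(p)^{<\omega})^\omega$ one escalates the number of generators accessible at deeper levels.

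For the main direction (if), assume for every prime $p$ there is $N_p$ with $H_n$ $p$-compact for all $n\ge N_p$. By Proposition~\ref{prop:tamebytrees}(3) it suffices to bound heights of group trees $S\subseteq T_G$ uniformly by some $\alpha<\omega_1$. The key structural input is Proposition~\ref{soleckipcompact}(ii)--(iii): for $n\ge N_p$, the $p$-component of $H_n$ is of the form $F\oplus\Z(p^\infty)^k$ and satisfies the minimal condition on subgroups. I would analyze a group tree $S$ prime by prime: for each prime $p$, the $p$-primary parts of the level subgroups $S\cap H^m$ behave rigidly enough under the derivative operation that the minimal condition forces only finitely many strict decreases per prime. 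Aggregating over primes and handling the finitely many pre-$p$-compact indices carefully, one should obtain a bound of the form $\omega\cdot N$ for a finite $N$ depending only on the sequence $(N_p)_p$ and on the structure of the relevant initial $H_n$'s.

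The main obstacle is uniformity: a single countable ordinal $\alpha$ must bound the heights of all wellfounded coset trees simultaneously, even though a given tree may involve infinitely many primes and the $N_p$'s are in general unbounded. Solecki's approach presumably packages $p$-compactness into a local rank-decrease estimate that sums effectively across primes; the delicate bookkeeping between different primes at a common level, together with the conversion between coset tree and group tree heights supplied by Lemmas~\ref{lem:c2g} and~\ref{lem:g2c}, is the technical heart of the argument.
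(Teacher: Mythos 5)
This theorem is cited from Solecki, not proved in the paper; but the paper's Theorem~\ref{main1technical} subsumes it, and Lemmas~\ref{notpcompact} and~\ref{nt2np} (together with Lemmas~\ref{lemma81}, \ref{lemma82} and Theorem~\ref{rankbound}) supply precisely the machinery that fills in Solecki's proof. Measured against that, your forward direction is essentially right in outline: from Proposition~\ref{soleckipcompact}(v) you extract either a closed copy of $\Z^\omega$ or an involvement of $(\Z(p)^{<\omega})^\omega$, and then need coset trees of arbitrarily large rank. One small inaccuracy: ``pull them back to $T_G$ through the involvement'' is not a routine operation, since the inverse image of a coset tree under a quotient map need not have controlled rank. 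The clean route is to cite the general fact that if $G$ involves $H$ and $G$ is tame then $H$ is tame (Hjorth--Mackey, Section 2.2), and then build the high-rank coset trees directly inside $T_{\Z^\omega}$ or $T_{(\Z(p)^{<\omega})^\omega}$ as you describe.

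The genuine gap is in the ``if'' direction, and it is the one you flag yourself. Your mechanism --- ``for each prime $p$ the minimal condition forces only finitely many strict decreases, then aggregate over primes to get $\omega\cdot N$ with $N$ depending on $(N_p)_p$'' --- is not how the argument works and would not obviously succeed: a single group tree can carry elements whose primary decompositions involve infinitely many primes at once, and there is no apparent way to sum ``finitely many decreases per prime'' to a countable ordinal. The lemma that actually does the work (Lemma~\ref{notpcompact}, which is the key claim inside Solecki's Lemma 8 / the paper's Lemma~\ref{nt2np}) runs in the opposite direction: if a group tree $S\subseteq T_G$ contains an element $\sigma$ of order a power of $p$ at level $m$ whose rank $r(\sigma)$ is a limit ordinal $\geq\omega$, then $G_{\langle m\rangle}/G_{\langle m+1\rangle}$ is already \emph{not} $p$-compact. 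Combined with Lemmas~\ref{lemma81} and~\ref{lemma82} (which propagate elements of prime-power order downward through the tree from any finite-order element of high rank, after first reducing the finitely many non-torsion coordinates away as in Lemma~\ref{rearrangement}), this shows that a group tree of height $>\omega\cdot 3$ produces non-$p$-compactness at infinitely many levels for some $p$ --- contradicting the hypothesis. In particular the resulting bound on heights is the fixed ordinal $\omega\cdot 4$ of Corollary~\ref{generalrank}; it does not depend on the sequence $(N_p)_p$, so the ``uniformity obstacle'' you worry about does not arise once the right local lemma is in hand.
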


It follows from this theorem that $\mathbb{Z}^\omega$ and $(\mathbb{Z}(p)^{<\omega})^\omega$ for any prime $p$ are examples of non-tame groups.

In our study here we first offer two additional formulations of $p$-compactness for countable abelian groups. For any prime $p$ and abelian group $\Gamma$, let $\Gamma_p$ denote the {\em $p$-component} of $\Gamma$, i.e., $$\Gamma_p=\{g\in\Gamma\,:\, \exists k\in\omega\ p^kg=0\};$$ also denote $$\Gamma[p]=\{g\in \Gamma\,:\, pg=0\}.$$ The nonzero elements of $\Gamma[p]$ are exactly the group elements of order $p$.

\begin{lemma}\label{compact} Let $p$ be a prime. Let $\Gamma$ be a countable abelian group. Then the following are also equivalent to the $p$-compactness of $\Gamma$:
\begin{enumerate}
\item[\rm (vi)] $\Gamma$ is torsion, and $\Gamma[p]$ is finite.
\item[\rm (vii)] $\Gamma$ does not have a subgroup isomorphic to either $\Z$ or $\Z(p)^{<\omega}$.
\end{enumerate}
\end{lemma}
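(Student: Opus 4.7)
The plan is to prove (vi) and (vii) are each equivalent to condition (iii) of Proposition~\ref{soleckipcompact}, namely that $\Gamma$ is torsion and $\Gamma_p \cong F \oplus \Z(p^\infty)^k$ for some finite $p$-group $F$ and $k\in\omega$. The equivalence (vi)~$\Leftrightarrow$~(vii) is essentially formal: an element generates a copy of $\Z$ iff it has infinite order, so ``no $\Z$-subgroup'' matches exactly ``torsion''; and $\Gamma[p]$ is a vector space over $\mathbb{F}_p$, hence it is finite iff it does not contain the countably infinite-dimensional $\Z(p)^{<\omega}$ (if $\Gamma[p]$ is infinite, any linearly independent sequence yields a copy of $\Z(p)^{<\omega}$ inside $\Gamma[p]\le\Gamma$). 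Moreover (iii)~$\Rightarrow$~(vi) is immediate from $\Gamma[p] = F[p] \oplus \Z(p^\infty)[p]^k$, which is finite since $F$ is finite and $|\Z(p^\infty)[p]| = p$.

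The substantive direction is (vi)~$\Rightarrow$~(iii). I would analyze $\Gamma_p$ via the divisible--reduced decomposition $\Gamma_p = D \oplus R$. Any divisible abelian $p$-group is isomorphic to $\Z(p^\infty)^{(I)}$ for some index set $I$; since $D[p]\le\Gamma[p]$ is finite we must have $|I| = k < \omega$, so $D \cong \Z(p^\infty)^k$. It remains to show that $R$ is finite. Since $R$ is reduced, $\bigcap_n p^n R = 0$, hence every nonzero $a\in R[p]$ has finite $p$-height $h_R(a) = \sup\{n : a\in p^n R\}$; as $R[p]$ is finite, these heights are bounded by some $N$. A short computation then gives $p^{N+1} R = 0$: if $0\ne x\in p^{N+1}R$ had order $p^n$, the element $p^{n-1}x$ would be a nonzero element of $R[p]$ of $p$-height at least $N+n$, contradicting the bound. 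Hence $R$ is a bounded abelian $p$-group with finite socle, and the structure theorem for bounded $p$-groups identifies it with a finite direct sum of cyclic $p$-groups, so $R$ is finite.

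The main obstacle is this last boundedness step: deducing from reducedness plus finite socle that $R$ must have bounded exponent. The height trick above is the key computation; once boundedness is in hand, finiteness of $R$ follows routinely from the structure theory of bounded abelian $p$-groups.
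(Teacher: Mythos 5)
Your handling of (vi)$\Leftrightarrow$(vii) and of (iii)$\Rightarrow$(vi) is fine and matches the paper's observations. The gap is in (vi)$\Rightarrow$(iii), at the step ``since $R$ is reduced, $\bigcap_n p^n R = 0$.'' This implication is false for abelian $p$-groups: a reduced $p$-group can have a nonzero first Ulm subgroup. The generalized Pr\"ufer group of length $\omega+1$ is the standard counterexample --- it is reduced, yet $\bigcap_n p^n R$ is a nonzero cyclic group of order $p$. So reducedness alone does not give you that every nonzero element of $R[p]$ has finite $p$-height, and that was the linchpin of your boundedness argument for $R$.

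The conclusion you want --- a reduced $p$-group $R$ with $R[p]$ finite is itself finite --- is true, but showing $\bigcap_n p^n R = 0$ under the extra hypothesis that $R[p]$ is finite requires a genuine argument, not a general property of reduced groups. The paper sidesteps the issue by applying Kulikov's theorem on basic subgroups directly to $\Gamma_p$: there is $L\le\Gamma_p$ which is a direct sum of cyclics with $\Gamma_p/L$ divisible; $L[p]\le\Gamma[p]$ finite forces $L$ finite; and a counting argument (the preimage $A=\{g\in\Gamma_p:pg\in L\}$ satisfies $A/\Gamma[p]\hookrightarrow L$, so $A$, and hence $(\Gamma_p/L)[p]$, is finite) shows $\Gamma_p/L$ has finite corank. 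Together these give the minimal condition, i.e.\ Proposition~\ref{soleckipcompact}(ii). You could patch your own route by applying Kulikov to $R$ itself: its basic subgroup $B$ is finite (since $B[p]\le R[p]$), bounded and pure hence a direct summand, so $R=B\oplus C$ with $C$ divisible, and then $R$ reduced forces $C=0$. But as written, the height-boundedness step rests on a false general implication and does not go through.
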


\begin{proof} 
Note that $\Gamma[p]$ is a vector space over $\Z(p)$. When its dimension is finite, $\Gamma[p]$ is finite. When its dimension is (countably) infinite, $\Gamma[p]$ is  isomorphic to $\mathbb{Z}(p)^{<\omega}$. From these observations it follows easily that (vi)$\Leftrightarrow$(vii). 

It is clear that (vii) follows from Proposition~\ref{soleckipcompact} (v).

To complete the proof we show that (vi) implies Proposition~\ref{soleckipcompact} (ii).  Assume that $\Gamma$ is torsion and $\Gamma[p]$ is finite. By a theorem of Kulikov (c.f. \cite{Ro} 4.3.4), there exists a subgroup $L\le \Gamma_p$ such that $L$ is a direct sum of cyclic groups and $\Gamma_p/L$ is divisible. Since $L[p]\subseteq \Gamma[p]$ is finite, we see that $L$ itself is also finite. Now a divisible $p$-group is a direct sum of $p$-quasicyclic groups (c.f. \cite{Ro} 4.1.5), and therefore either $\Gamma_p/L\cong\Z(p^\infty)^n$ for some $n\in\omega$ or $H_p/L\cong\Z(p^\infty)^{<\omega}$. If $\Gamma_p/L\cong\Z(p^\infty)^{<\omega}$, then $(\Gamma_p/L)[p]$ is infinite, thus $A=\{g\in \Gamma_p:pg\in L\}$ is also infinite. But note that $g\mapsto pg$ is a homomorphism from $A$ into $L$ with kernel $\Gamma[p]$, we have that $A/\Gamma[p]$ is isomorphic to a subgroup of $L$. Since $\Gamma[p]$ and $L$ are finite while $A$ is infinite, this is a contradiction. Therefore, $\Gamma_p/L\cong\Z(p^\infty)^n$ for some $n\in\omega$. In particular, both $\Gamma_p/L$ and $L$ satisfy the minimal condition. Since groups satisfying the minimal condition are closed under group extensions (c.f. \cite{Ro} 3.1.7), we conclude that $\Gamma_p$ satisfies the minimal condition. 
\end{proof}

We will also use the following simple corollary for countable abelian $p$-compact groups.

\begin{lemma}\label{extansion}
Let $p$ be a prime. Let $\Gamma$ be a countable abelian group and $L\leq \Gamma$. Then $\Gamma$ is $p$-compact if and only if both $L$ and $\Gamma/L$ are $p$-compact.
\end{lemma}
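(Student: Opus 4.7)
The plan is to work with characterization (ii) of Proposition~\ref{soleckipcompact}, namely that a countable abelian group is $p$-compact iff it is torsion and its $p$-component satisfies the minimal condition. Each of the two ingredients (torsion, minimal condition on the $p$-component) will be verified separately to pass between $\Gamma$ and the pair $(L,\Gamma/L)$.

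The first step handles the torsion hypothesis. That subgroups and quotients of torsion groups are torsion is immediate, giving one direction. For the converse, if $L$ and $\Gamma/L$ are both torsion and $g\in\Gamma$, choose $n$ with $ng\in L$ and then $m$ with $m(ng)=0$; then $(nm)\cdot g=0$, so $\Gamma$ is torsion.

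Next, assuming torsion, I would use the primary decomposition to identify $(\Gamma/L)_p$ with $\Gamma_p/L_p$. By a standard Chinese-remainder argument applied element-by-element, every subgroup $L$ of a torsion abelian group decomposes as $L=\bigoplus_q L_q$ with $L_q=L\cap \Gamma_q$, so $\Gamma/L\cong\bigoplus_q \Gamma_q/L_q$. Since each summand is a $q$-group, taking $p$-components yields $(\Gamma/L)_p\cong \Gamma_p/L_p$. Note also that $L_p=L\cap\Gamma_p$ as groups. This reduces the problem to the question of whether $\Gamma_p$ satisfies the minimal condition iff both $L_p$ and $\Gamma_p/L_p$ do.

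That last reduction is the classical closure of the minimal condition under subgroups, quotients, and extensions. Subgroups of a min-condition group obviously inherit it, as do quotients (pull a descending chain back through the quotient map to a descending chain above $L_p$). The extension direction is exactly the content of Robinson \cite{Ro} 3.1.7, already invoked inside the proof of Lemma~\ref{compact}. Assembling these with Step~1 gives both implications. I do not anticipate any substantive obstacle: the only nonroutine input is the extension closure of the minimal condition, which is cited rather than reproved, and the only mild subtlety is noting that one must pass through the primary decomposition in order to identify $(\Gamma/L)_p$ with $\Gamma_p/L_p$ rather than merely with a quotient of $\Gamma_p$ by something larger than $L_p$.
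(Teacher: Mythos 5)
Your proof is correct, and the backward direction matches the paper's argument in spirit (torsion plus the minimal condition on $\Gamma_p$, closure of the minimal condition under subgroups and extensions via Robinson 3.1.7). Where you genuinely diverge is the forward direction. You apply characterization (ii) uniformly to both directions, passing through the primary decomposition to identify $(\Gamma/L)_p$ with $\Gamma_p/L_p$ and then invoking subgroup/quotient closure of the minimal condition. The paper instead proves the forward direction straight from the \emph{definition} of $p$-compactness: $L$ is $p$-compact trivially, and for $\Gamma/L$ one pushes a descending chain in $\Z(p)\times\Gamma/L$ back along $\phi:\Z(p)\times\Gamma\to\Z(p)\times\Gamma/L$, $\phi(a,g)=(a,g+L)$. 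This is not merely a stylistic choice: as the paper remarks after the lemma, only the backward direction fails without commutativity (Hjorth's solvable counterexample), and the paper's forward argument makes this visible by never touching the abelian-only characterization (ii). Your version is tidier within the abelian setting but conceals that asymmetry. One further small difference: for the backward direction the paper does not need the full isomorphism $(\Gamma/L)_p\cong\Gamma_p/L_p$ that your primary-decomposition step buys; the cheaper observation that $\Gamma_p/L_p$ embeds in $(\Gamma/L)_p$ (the natural map $\Gamma_p\to\Gamma/L$ has kernel $\Gamma_p\cap L=L_p$ and image contained in $(\Gamma/L)_p$) already suffices, so the primary decomposition can be dispensed with entirely.
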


\begin{proof}
For ($\Rightarrow$) assume that $\Gamma$ is $p$-compact. It is trivial from the definition of $p$-compactness that $L$ is $p$-compact. Define homomorphism $\phi:\Z(p)\times \Gamma\to\Z(p)\times \Gamma/L$ as $\phi(a,g)=(a,g+L)$. If $(G_n)$ is a decreasing sequences of groups witnessing that $\Gamma/L$ is not $p$-compact, then $(\phi^{-1}(G_n))$ witnesses that $\Gamma$ is not $p$-compact.

For ($\Leftarrow$) assume that both $L$ and $\Gamma/L$ are $p$-compact. Using Proposition~\ref{soleckipcompact} (ii) it suffices to show that $\Gamma$ is torsion and $\Gamma_p$ satisfies the minimal condition. Again using Proposition~\ref{soleckipcompact} (ii) we assume that both $L$ and $\Gamma/L$ are torsion, and that both $L_p$ and $(\Gamma/L)_p$ satisfy the minimal condition. It is easy to see that $L_p=\Gamma_p\cap L$ and $\Gamma_p/L_p\subseteq (\Gamma/L)_p$. Since groups satisfying the minimal condition are closed under subgroups and group extensions (c.f. \cite{Ro} 3.1.7), we get that $\Gamma_p$ satisfies the minimal condition. It is easy to verify that torsion groups are also closed under subgroups and group extensions, and thus $\Gamma$ is also torsion. 
\end{proof}

We remark that the backward direction of Lemma~\ref{extansion} fails without the abelian assumption. A counterexample, which is solvable of rank 2, was constructed by Hjorth in \cite{Hj}. 

We are now ready to prove Theorem~\ref{main1}. For the rest of the section let $(H_n)$ be a sequence of countable discrete abelian groups and $G\leq\prod_n H_n$ be a closed subgroup. We use $\vec{0}$ to denote the identity element of $G$. We will use all notation developed in Section 3 regarding group trees. In addition, we will also use the following notation. Let  $G_{\langle 0\rangle}=G$, and for $m\in\mathbb{N}_+$, let
$$G_{\langle m\rangle}=\{x\in G:\forall k< m\ \pi_k(x)=0\}.$$
Then each $G_{\langle m\rangle}$ is an open subgroup of $G$, and $(G_{\langle m\rangle})$ is a nbhd base of $\vec{0}$ in $G$. Note that for any $n\in\omega$, $G_{\langle n\rangle}/G_{\langle n+1\rangle}\cong \pi_n[G_{\langle n\rangle}]\leq H_n$.

\begin{lemma} \label{notpcompact} Let $p$ be a prime and $m\in\mathbb{N}_+$. Let $S\subseteq T_G$ be a group tree and $\sigma\in S\cap H^m$ with a power of $p$ as its order. If $\omega\leq r(\sigma)<\omega_1$ and $r(\sigma)$ is a limit ordinal, then $G_{\langle m\rangle}/G_{\langle m+1\rangle}$ is not $p$-compact.
\end{lemma}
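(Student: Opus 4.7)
My plan is a proof by contradiction: assume $\Gamma := G_{\langle m\rangle}/G_{\langle m+1\rangle}$ is $p$-compact and derive a contradiction. Set $K := S\cap(\{0\}^m\times H_m)$, which embeds as a subgroup of $\Gamma$ via $\Gamma\cong\pi_m[G_{\langle m\rangle}]$. By Lemma~\ref{extansion}, $K$ is also $p$-compact, and hence torsion by Lemma~\ref{compact}(vi). I fix any extension $\tau^*\in S\cap H^{m+1}$ of $\sigma$ and set $\gamma^*=\pi_m(\tau^*)$; since $\sigma$ has order $p^j$, the element $p^j\tau^*=(0,\ldots,0,p^j\gamma^*)$ lies in $K$, so $N:=\langle p^j\gamma^*\rangle\leq K$ is a finite cyclic subgroup.

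Choose a strictly increasing sequence $\beta_n\nearrow\alpha:=r(\sigma)$. For each $n\in\omega$ I will set
\[
G_n := \{(i,\rho)\in\mathbb{Z}(p^j)\times K : i\tau^* + (0,\ldots,0,\rho) \in D^{\beta_n}(S)\}.
\]
The key algebraic observation is that the bijection $(i,\rho)\leftrightarrow i\tau^*+(0,\ldots,0,\rho)$ identifies $\mathbb{Z}(p^j)\times K$, endowed with the cocycle-twisted operation
\[
(i_1,\rho_1)\cdot(i_2,\rho_2) = \bigl((i_1+i_2)\bmod p^j,\ \rho_1+\rho_2+\lfloor(i_1+i_2)/p^j\rfloor p^j\gamma^*\bigr),
\]
with the subgroup $\langle\tau^*\rangle+K$ of $S\cap H^{m+1}$. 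Thus $G_n$ is a subgroup of the twisted group because $D^{\beta_n}(S)$ is itself a group tree. Since the cocycle takes values in $N$, the map $(i,\rho)\mapsto(i\bmod p,\,\rho+N)$ is a genuine homomorphism onto standard $\mathbb{Z}(p)\times(K/N)$; letting $\bar G_n$ be the image of $G_n$, one gets a subgroup of $\mathbb{Z}(p)\times(\Gamma/N)$, and $(\bar G_n)$ is decreasing since $D^{\beta_{n+1}}(S)\subseteq D^{\beta_n}(S)$.

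Because $r(\sigma)=\alpha>\beta_n$, $\sigma$ has an extension in $D^{\beta_n}(S)$, which yields some $(1,\bar\rho_n)\in\bar G_n$, so $\pi[\bar G_n]=\mathbb{Z}(p)$ for every $n$. Suppose for contradiction that $(1,\bar\rho)\in\bigcap_n\bar G_n$ for some $\bar\rho$; then for each $n$ there is $(i_n,\rho_n)\in G_n$ with $i_n\equiv 1\pmod p$ and $\rho_n+N=\bar\rho$. Since $i_n$ ranges over the finite set $\{1,1+p,\ldots,1+(p^{j-1}-1)p\}$ and $\rho_n$ over the finite coset $\bar\rho$ (of size $|N|$), a double pigeonhole extracts an infinite subsequence on which $(i_n,\rho_n)=(i^*,\rho')$ is constant. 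Then $i^*\tau^*+(0,\ldots,0,\rho')\in D^{\beta_n}(S)$ for cofinally many $\beta_n$, forcing $r_S(i^*\tau^*+(0,\ldots,0,\rho'))\geq\alpha$. But this element is an extension of $i^*\sigma$, and $\gcd(i^*,p^j)=1$ makes $\sigma$ a $\mathbb{Z}$-multiple of $i^*\sigma$ so that $r_S(i^*\sigma)=r_S(\sigma)=\alpha$; hence no extension of $i^*\sigma$ in $S$ can have rank $\geq\alpha$, a contradiction. This shows $\Gamma/N$ is not $p$-compact, and since $N$ is finite and thus $p$-compact, Lemma~\ref{extansion} contradicts the $p$-compactness of $\Gamma$.

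The principal subtlety will be formulating the cocycle twist on $\mathbb{Z}(p^j)\times K$ so that $G_n$ is genuinely a subgroup; once this structural set-up is in place, the rank computations using the non-archimedean valuation $r_S(\eta_1-\eta_2)\geq\min(r_S(\eta_1),r_S(\eta_2))$ on the group tree $S$, together with the pigeonhole argument enabled by torsion-ness of $K$ (which makes $N$ finite), complete the proof.
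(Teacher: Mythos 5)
Your argument is correct. It is close in spirit to the paper's proof, but the bookkeeping differs in a few interesting ways. The paper works directly (not by contradiction): with $C=\langle\sigma\rangle\cong\Z(p^k)$, it sets $\Gamma'=\pi_m[(\pi^m)^{-1}(C)\cap S]$ and $K_i'=K_i\cap(C\times\Gamma')$ where $K_i=\{\tau\in S\cap H^{m+1}:r(\tau)\geq\beta_i\}$. Because $C\times\Gamma'$ is a genuine direct product subgroup of $H^{m+1}$, there is no cocycle to untwist; the paper applies $\Phi=\phi\times\mathrm{id}$ (with $\phi:C\to\Z(p)$ surjective) and uses the observation that a finite-to-one homomorphism commutes with decreasing intersections --- which is exactly your pigeonhole argument in disguise --- to conclude that $\Gamma'$ is not $p$-compact. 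It then passes to $L=\pi_m[\ker(\pi^m)\cap S]$ (your $K$) by noting ${\rm card}(\Gamma'/L)\leq{\rm card}(C)$ and applying Lemma~\ref{extansion}. Your version replaces the rectangle $C\times\Gamma'$ by the cocycle-twisted $\Z(p^j)\times K$ obtained from the section $\tau^*$, then quotients by the finite cyclic group $N=\langle p^j\gamma^*\rangle$ to untwist. This makes your proof contingent on knowing $N$ is finite, which you obtain by running the argument as a contradiction so that $K$ inherits torsion from the assumed $p$-compactness of $G_{\langle m\rangle}/G_{\langle m+1\rangle}$; the paper never needs $N$ at all, and its finiteness comes only from $|C|=p^k$. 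In short, your proof is sound but slightly heavier: the cocycle twist and the quotient by $N$ are doing work that the paper sidesteps by choosing $C\times\Gamma'$ as the ambient group. (One small wording slip at the end: the groups $\bar G_n$ directly witness that $K/N$ rather than $\Gamma/N$ is not $p$-compact, though since $K\leq\Gamma$ either phrasing leads to the same contradiction. Also worth making explicit is that each $D^{\beta}(S)$ is again a group tree, which you use both to see $G_n$ is a subgroup and to get $r_S(i^*\sigma)=r_S(\sigma)$.)
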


\begin{proof}
We will continue to use $\pi^m$ to denote the projection from $H^{m+1}$ to $H^m$. Let $(\beta_i)$ be a strictly increasing sequence of ordinals with limit $r(\sigma)$. For each $i\in\omega$, let $$K_i=\{\tau\in S\cap H^{m+1}:r(\tau)\ge\beta_i\}.$$ Then  $(K_i)$ is a decreasing sequence of subgroups of $H^{m+1}$, and $$\sigma\in\left(\displaystyle\bigcap_i\pi^m[K_i]\right)\setminus\pi^m\left[\bigcap_iK_i\right].$$ Let $C=\langle\sigma\rangle\le H^m$. Then $C\cong\Z(p^k)$ for some $k\in\omega$. Let 
$$\Gamma=\pi_m[(\pi^m)^{-1}(C)\cap S]\leq H_m$$ and for each $i\in\omega$, $$K_i'=K_i\cap(C\times\Gamma).$$ Let $\phi:C\to\Z(p)$ be a surjective homomorphism. Let $\Phi=\phi\times{\rm id}:C\times\Gamma\to\Z(p)\times\Gamma$. Since $\Phi$ is finite-to-one, we have 
$$\Phi\left[\bigcap_iK_i'\right]=\bigcap_i\Phi[K_i'].$$ Let $\pi:\Z(p)\times\Gamma\to\Z(p)$ be the projection map. We have $\pi\circ\Phi=\phi\circ\pi^m$ on $C\times \Gamma$.
Since $\sigma\notin\pi^m\left[\bigcap_iK_i\right]$, we have $$\pi^m\left[\bigcap_iK_i'\right]=\pi^m\left[\bigcap_i K_i\cap (C\times \Gamma)\right]\subseteq \pi^m\left[\bigcap_i K_i\right]\cap C \leq \langle p\sigma\rangle. $$
Thus $$\pi\left[\bigcap_i\Phi[K_i']\right]=\pi\left[\Phi\left[\bigcap_i K_i'\right]\right]=\phi\left[\pi^m\left[\bigcap_iK_i'\right]\right]=\{0\}.$$ On the other hand, for each $i\in\omega$, we have $\sigma\in\pi[K_i']$, so $\pi[\Phi[K_i']]=\phi[\pi^m[K_i']]=\Z(p)$. Thus the decreasing sequence of groups $(\Phi[K_i'])$ witnesses that $\Gamma$ is  not $p$-compact.

Now let $L=\pi_m[\ker(\pi^m)\cap S]$. Note that $L\le\Gamma$ with
$${\rm card}(\Gamma/L)\le{\rm card}(((\pi^m)^{-1}(C)\cap S)/(\ker(\pi^m)\cap S))\le{\rm card}(C)=p^k.$$
Thus $\Gamma/L$ is finite, and in particular $p$-compact. Since $\Gamma$ is not $p$-compact, it follows from Lemma \ref{extansion} that $L$ is also not $p$-compact.

In the end, note that $L\le\pi_m(G_{\langle m\rangle})\cong G_{\langle m\rangle}/G_{\langle m+1\rangle}$ and so $G_{\langle m\rangle}/G_{\langle m+1\rangle}$ is not $p$-compact.
\end{proof}

\begin{lemma}\label{nt2np}
Suppose for any $\alpha<\omega_1$ there exists a group tree $S\subseteq T_G$ with $\het(S)>\alpha$. Then for some prime $p$ there exist infinitely many $n\in\omega$ such that $G_{\langle n\rangle}/G_{\langle n+1\rangle}$ is not $p$-compact.
\end{lemma}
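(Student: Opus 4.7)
The plan is to exploit that non-tameness of $G$ propagates to every open subgroup $G_{\langle k\rangle}$. Since $G_{\langle k\rangle}$ is open in $G$, the quotient $G/G_{\langle k\rangle}$ is discrete and countable; picking coset representatives $(g_i)_{i\in I}$, any Borel $G$-orbit equivalence relation decomposes as $E^X_G=\bigcup_{i\in I}\{(x,x'):(x,(-g_i)\cdot x')\in E^X_{G_{\langle k\rangle}}\}$, a countable union of Borel sets whenever $E^X_{G_{\langle k\rangle}}$ is Borel. Hence $G_{\langle k\rangle}$ tame would force $G$ tame; by contrapositive and Proposition~\ref{prop:tamebytrees} applied to $G_{\langle k\rangle}$, for every $k$ the heights of group trees in $T_G$ whose elements all begin with $k$ zeros are unbounded in $\omega_1$.

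For each $k\in\mathbb{N}_+$, I fix such a group tree $S^{(k)}\subseteq T_G$ with $\het(S^{(k)})>\omega+k+1$. Pick $\sigma_0\in S^{(k)}$ with $r(\sigma_0)=\omega+k+1$; since $r(0^m)=\omega_1$ in any group tree, $\sigma_0$ is not an identity, forcing $\lh(\sigma_0)\geq k+1$. Iterated descent produces $\sigma_0\subsetneq\sigma_1\subsetneq\cdots\subsetneq\sigma_{k+1}$ in $S^{(k)}$ with $r(\sigma_j)=\omega+k+1-j$: at each step the definition of rank yields a proper extension of rank $\omega+k-j$, and such an extension cannot have rank $\omega+k+1-j$ without placing $\sigma_j$ in $D^{\omega+k+2-j}$. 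Setting $\tau^{(k)}:=\sigma_{k+1}$ and $m^{(k)}:=\lh(\tau^{(k)})\geq 2k+2$, $\tau^{(k)}$ has rank exactly $\omega$ and its first $k$ coordinates vanish.

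I split on whether $\tau^{(k)}$ is torsion. If it is, decompose $\tau^{(k)}=\sum_p(\tau^{(k)})_p$ into $p$-primary parts; each $(\tau^{(k)})_p$ is an integer multiple of $\tau^{(k)}$ and hence lies in the subgroup $D^\omega(S^{(k)})\cap H^{m^{(k)}}$. If every component belonged to $D^{\omega+1}$, the sum $\tau^{(k)}$ would too, contradicting $r(\tau^{(k)})=\omega$. So some prime $p^{(k)}$ gives a nonzero element of $p^{(k)}$-power order and rank exactly $\omega$, and Lemma~\ref{notpcompact} yields that $G_{\langle m^{(k)}\rangle}/G_{\langle m^{(k)}+1\rangle}$ is not $p^{(k)}$-compact. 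If $\tau^{(k)}$ is torsion-free, some coordinate has infinite order; the least such index $j^{(k)}$ satisfies $j^{(k)}\geq k$ by the zero-prefix. Letting $N:=\prod_{k\leq i<j^{(k)}}\mathrm{ord}(\pi_i(\tau^{(k)}))$, the element $N\tau^{(k)}\in S^{(k)}$ has first $j^{(k)}$ coordinates zero with $\pi_{j^{(k)}}(N\tau^{(k)})$ of infinite order. Extending $N\tau^{(k)}$ to any $x\in G$ places $x\in G_{\langle j^{(k)}\rangle}$ with $\pi_{j^{(k)}}(x)$ of infinite order, so $G_{\langle j^{(k)}\rangle}/G_{\langle j^{(k)}+1\rangle}$ is non-torsion---and hence not $p$-compact for any prime $p$.

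At least one of the two cases holds for infinitely many $k$. In the torsion case, pigeonhole on the countable set of primes produces a single $p^*$ appearing as $p^{(k)}$ infinitely often; the associated levels $m^{(k)}\geq 2k+2$ take infinitely many distinct values, giving infinitely many $n$ with $G_{\langle n\rangle}/G_{\langle n+1\rangle}$ not $p^*$-compact. In the torsion-free case, the indices $j^{(k)}\geq k$ already range through infinitely many distinct levels, each witnessing a non-torsion quotient, and any fixed prime (say $p^*=2$) works. The principal subtlety is the torsion-free case, since Lemma~\ref{notpcompact} requires a $p$-power-order element; the reduction to deep zero-prefix trees via $G_{\langle k\rangle}$'s non-tameness is precisely what forces $j^{(k)}\to\infty$.
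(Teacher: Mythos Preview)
Your argument has a genuine gap in the torsion case, at the pigeonhole step. You assign to each $k$ (in an infinite set) a prime $p^{(k)}$ and then assert that ``pigeonhole on the countable set of primes produces a single $p^*$ appearing as $p^{(k)}$ infinitely often.'' But both the index set and the set of primes are countably infinite, so no such pigeonhole principle applies: nothing prevents $p^{(k)}$ from being the $k$-th prime for every $k$. In that scenario your construction would only yield, for each $k$, a single level $m^{(k)}$ at which $G_{\langle m^{(k)}\rangle}/G_{\langle m^{(k)}+1\rangle}$ fails to be $p_k$-compact, with the prime changing every time. That is strictly weaker than the conclusion of the lemma, which demands one fixed prime working at infinitely many levels.

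The reason the paper (following Solecki's Lemma~8) does not run into this problem is that it fixes the prime \emph{before} descending in rank. One works inside a single group tree of sufficiently large height; starting from a finite-order element of high rank, Lemma~\ref{lemma81} produces a prime $p$ and an element of $p$-power order whose rank is still large (at least $\omega\cdot 2$, say), and then Lemma~\ref{lemma82} lets one descend repeatedly to extensions of the \emph{same} $p$-power order, landing at rank exactly $\omega$ at arbitrarily deep levels. Your strategy of passing to separate trees $S^{(k)}\subseteq T_{G_{\langle k\rangle}}$ is what makes the torsion-free case clean (it forces $j^{(k)}\geq k$), but in the torsion case it discards the control over the prime that the single-tree approach provides. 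A repair would be to combine the two: in the torsion case, take one tree of height exceeding $\omega\cdot 2$, use Lemmas~\ref{lemma81} and \ref{lemma82} to pin down $p$ and produce $p$-power-order elements of rank $\omega$ at levels tending to infinity, and then invoke Lemma~\ref{notpcompact} at each of those levels.
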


\begin{proof} This lemma is similar to Lemma 8 of \cite{solecki} with the proof being the same, except Lemma~\ref{notpcompact} is needed to replace a similar claim in that proof.
\end{proof}

Note that Lemma 8 of \cite{solecki} holds for countable products of countable groups without the abelian assumption. Here we need the abelian assumption since Lemma~\ref{extansion} is used.

\begin{lemma}\label{anybase} Let $p$ be any prime. Let $(G_n)$ be any decreasing sequence of open subgroups of $G$ such that $(G_n)$ is a nbhd base of $\vec{0}$. Then there are infinitely many $n\in\omega$ such that $G_{\langle n\rangle}/G_{\langle n+1\rangle}$ is not $p$-compact if and only if there are infinitely many $n\in\omega$ such that $G_n/G_{n+1}$ is not $p$-compact.
\end{lemma}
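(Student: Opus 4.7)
The plan is to prove the equivalence via contrapositive: ``for all but finitely many $n$, $G_n/G_{n+1}$ is $p$-compact'' is equivalent to the analogous statement for $(G_{\langle n\rangle})$. Since $(G_n)$ and $(G_{\langle n\rangle})$ enter symmetrically—both are decreasing neighborhood bases of $\vec{0}$ consisting of open subgroups—it suffices to prove one direction, say from the $(G_n)$ condition to the $(G_{\langle n\rangle})$ condition. The two tools I will use are Lemma~\ref{extansion} and the fact (immediate from the definition of $p$-compactness, and implicit in the forward direction of Lemma~\ref{extansion}) that subgroups of $p$-compact groups are $p$-compact.

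The key structural observation is that the two bases must \emph{interleave}. Concretely, for each $k$ let $n(k)$ be the largest $n$ with $G_n\supseteq G_{\langle k\rangle}$; this maximum exists whenever $G_{\langle k\rangle}\neq\{\vec{0}\}$, since any nonzero $g\in G_{\langle k\rangle}$ fails to lie in $G_n$ for all large $n$, and $n(k)\to\infty$ as $k\to\infty$ because $(G_{\langle k\rangle})$ is a neighborhood base and each $G_N$ is an open neighborhood of $\vec{0}$. I then pick $m=m(k)\geq n(k)$ with $G_m\subseteq G_{\langle k+1\rangle}$, using that $(G_n)$ is itself a neighborhood base, so as to obtain the sandwich
$$ G_m\;\subseteq\; G_{\langle k+1\rangle}\;\subseteq\; G_{\langle k\rangle}\;\subseteq\; G_{n(k)}. $$
The degenerate cases where $G_{\langle k\rangle}=\{\vec{0}\}$ for large $k$, or dually $G_n=\{\vec{0}\}$ for large $n$, are handled separately but are trivial: both sides of the stated equivalence then hold vacuously.

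Assuming $G_n/G_{n+1}$ is $p$-compact for $n\geq N$, I will take $k$ so large that $n(k)\geq N$. A straightforward induction on $m-n(k)$, applying the backward direction of Lemma~\ref{extansion} at each step to the extensions $0\to G_{i+1}/G_m\to G_i/G_m\to G_i/G_{i+1}\to 0$ for $n(k)\leq i<m$, shows that $G_{n(k)}/G_m$ is $p$-compact. The chain above then exhibits $G_{\langle k\rangle}/G_m$ and its subgroup $G_{\langle k+1\rangle}/G_m$ as subgroups of the $p$-compact group $G_{n(k)}/G_m$, so both are $p$-compact. A final application of the forward direction of Lemma~\ref{extansion} to the extension $0\to G_{\langle k+1\rangle}/G_m\to G_{\langle k\rangle}/G_m\to G_{\langle k\rangle}/G_{\langle k+1\rangle}\to 0$ yields the $p$-compactness of $G_{\langle k\rangle}/G_{\langle k+1\rangle}$ for all sufficiently large $k$. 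The reverse implication runs identically with the roles of the two sequences swapped. The only remotely delicate point is the bookkeeping for the interleaving and the degenerate cases; I do not anticipate any substantive mathematical obstacle.
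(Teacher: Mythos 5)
Your proof is correct and uses the same key idea as the paper: interleave the two neighborhood bases to get a four-term sandwich, then iterate Lemma~\ref{extansion} along the inner sequence and apply it once more to transfer (non-)$p$-compactness of a quotient across the sandwich. The paper presents the direction you treat as ``symmetric'' directly, via a sandwich $G_{\langle N\rangle}\le G_{k+1}\le G_k\le G_{\langle n\rangle}$ rather than your contrapositive $n(k)$ bookkeeping, but these are just mirror images of the same argument.
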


\begin{proof} Suppose there are infinitely many $k\in\omega$ such that $G_k/G_{k+1}$ is not $p$-compact. 
Fix any $n\in\omega$. There exist $k\in\omega$ and $N>n$ such that $G_k/G_{k+1}$ is not $p$-compact and $G_{\langle N\rangle}\le G_{k+1}\le G_k\le G_{\langle n\rangle}$. Since $G_k/G_{k+1}$ is not $p$-compact, neither is $G_{\langle n\rangle}/G_{\langle N\rangle}$ by Lemma~\ref{extansion}. By repetitive applications of Lemma \ref{extansion} again, we get that there is $n\leq m< N$ such that $G_{\langle m\rangle}/G_{\langle m+1\rangle}$ is not $p$-compact. This shows that there are infinitely many $m\in\omega$ such that $G_{\langle m\rangle}/G_{\langle m+1\rangle}$ is not $p$-compact. The converse direction is symmetric.
\end{proof}

We have finally come to an extended version of Theorem~\ref{main1}.

\begin{theorem}\label{main1technical}
Let $G$ be a non-archimedean abelian Polish group. The following are equivalent:
\begin{enumerate}
\item[(0)] $G$ is tame.
\item[(1)] There is a decreasing sequence of open subgroups $(G_k)$ which is a nbhd base of the identity element of $G$ such that
\begin{itemize}
\item[(a)] for all but finitely many $k$, $G_k/G_{k+1}$ is torsion, and 
\item[(b)] for any prime $p$ and for all but finitely many $k$, $G_k/G_{k+1}$ contains only finitely many elements of order $p$.
\end{itemize}
\item[(2)] For any decreasing sequence of open subgroups $(G_k)$ that is a nbhd base of the identity element of $G$, we have that
\begin{itemize}
\item[(a)] for all but finitely many $k$, $G_k/G_{k+1}$ is torsion, and 
\item[(b)] for any prime $p$ and for all but finitely many $k$, $G_k/G_{k+1}$ contains only finitely many elements of order $p$.
\end{itemize}
\item[(3)] $G$ does not contain a closed subgroup isomorphic to $\Z^\omega$, and $G$ does not involve $(\mathbb{Z}(p)^{<\omega})^\omega$ for any prime $p$.
\item[(4)] $G$ does not involve either $\Z^\omega$ or $(\mathbb{Z}(p)^{<\omega})^\omega$ for any prime $p$.
\end{enumerate}
\end{theorem}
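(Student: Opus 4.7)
The plan is to close the cycle (0)$\Rightarrow$(4)$\Rightarrow$(3)$\Rightarrow$(1)$\Rightarrow$(0), and separately establish (1)$\Leftrightarrow$(2). The implications (2)$\Rightarrow$(1) and (4)$\Rightarrow$(3) are immediate. For (0)$\Rightarrow$(4), one observes via Proposition~\ref{soleckipcompact} that neither $\Z$ nor $\Z(p)^{<\omega}$ is $p$-compact, so Theorem~\ref{soleckimain} makes $\Z^\omega$ and $(\Z(p)^{<\omega})^\omega$ non-tame; since tameness passes to groups involved in a tame group, any tame $G$ can involve neither.

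For (1)$\Leftrightarrow$(2), Lemma~\ref{compact}(vi) shows that conditions (a) and (b) together on a nbhd base $(G_k)$ say precisely that for each prime $p$ and all but finitely many $k$, the quotient $G_k/G_{k+1}$ is $p$-compact (one threshold for each $p$, picked up from (a)+(b) by combining the torsion threshold with the $\Gamma[p]$-finiteness threshold for that $p$). Lemma~\ref{anybase} shows this combined condition is insensitive to the particular nbhd base chosen, so (1) and (2) are equivalent.

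For (1)$\Rightarrow$(0), the previous step lets me assume (a) and (b) hold for the canonical base $(G_{\langle n\rangle})$, so for each prime $p$, for all but finitely many $n$, $G_{\langle n\rangle}/G_{\langle n+1\rangle}$ is $p$-compact. The contrapositive of Lemma~\ref{nt2np} then bounds all group-tree heights in $T_G$ by a single countable ordinal, and Proposition~\ref{prop:tamebytrees} concludes that $G$ is tame.

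The main obstacle is (3)$\Rightarrow$(1), which I handle by contraposition. Assuming (1) fails, by (1)$\Leftrightarrow$(2) already the canonical base $(G_{\langle n\rangle})$ violates (a) or (b). In Case (a), infinitely many $n$ give non-torsion $G_{\langle n\rangle}/G_{\langle n+1\rangle}$; I pick $n_0<n_1<\cdots$ and $\xi_k\in G_{\langle n_k\rangle}$ with $\xi_k+G_{\langle n_k+1\rangle}$ of infinite order, and define $\phi:\Z^\omega\to G$ by $\phi(a)=\sum_k a_k\xi_k$, which converges in the product topology because $n_k\to\infty$. A level-by-level argument using openness of the $G_{\langle n_k+1\rangle}$ and infinite order of $\xi_k$ modulo $G_{\langle n_k+1\rangle}$ shows that $\phi$ is a topological embedding onto a closed subgroup, giving a copy of $\Z^\omega$ inside $G$ and thereby contradicting the first half of (3). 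In Case (b), infinitely many $n$ give $(G_{\langle n\rangle}/G_{\langle n+1\rangle})[p]$ infinite for some fixed prime $p$; from each such level I extract countably many independent order-$p$ elements, lift them to $G_{\langle n\rangle}$, and by analogous inductive bookkeeping assemble a closed subgroup of $G$ together with a continuous surjection onto $(\Z(p)^{<\omega})^\omega$, contradicting the second half of (3). The real difficulty in both cases is to arrange the chosen elements to be \emph{sufficiently independent modulo deeper levels}, so that closedness of the image is guaranteed in Case (a) and both surjectivity and a well-defined closed domain are obtained in Case (b); both constructions proceed by a careful diagonal extraction that exploits the freedom to perturb $\xi_k$ within $G_{\langle n_{k+1}\rangle}$ at each stage.
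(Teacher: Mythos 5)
Your proposal is correct and follows essentially the same route as the paper: the paper runs the cycle $(*)\Rightarrow(0)\Rightarrow(4)\Rightarrow(3)\Rightarrow(*)$ with $(*)\Leftrightarrow(1)\Leftrightarrow(2)$, where $(*)$ is the $p$-compactness formulation of condition (1) for the canonical base, while you substitute (1) for $(*)$ directly, which is an inessential relabeling. Your constructions for (3)$\Rightarrow$(1) are exactly the paper's Case 1 (building a closed copy of $\Z^\omega$ from non-torsion quotients) and Case 2 (building a closed subgroup that continuously surjects onto $(\Z(p)^{<\omega})^\omega$ via the independent-order-$p$-element bookkeeping), though note that in the paper only Case 2 actually needs the diagonal perturbation you mention — in Case 1 the $x_k$ can be chosen freely at each level.
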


\begin{proof}
By Proposition~\ref{prop:abel}, we may fix a sequence of countable abelian groups $(H_n)$ such that $G$ is a closed subgroup of $\prod_n H_n$. Consider the additional clause:
\begin{enumerate}
\item[(*)] {\em For any prime $p$, for all but finitely many $n\in\omega$, $G_{\langle n\rangle}/G_{\langle n+1\rangle}$ is $p$-compact.}
\end{enumerate}
The equivalences (*)$\Leftrightarrow$(1)$\Leftrightarrow$(2) follow from Lemma~\ref{anybase} and Lemma~\ref{compact} (vi).

(*)$\Rightarrow$(0) follows from Lemma~\ref{nt2np} and Proposition~\ref{prop:tamebytrees} (3).

(0)$\Rightarrow$(4) because $\mathbb{Z}^\omega$ and $(\Z(p)^{<\omega})^\omega$ are not tame for any prime $p$, and because of the general fact that if $G$ involves $H$ and $G$ is tame, then so is $H$.

(4)$\Rightarrow$(3) is obvious.

It remains to prove (3)$\Rightarrow$(*).

We assume there is a strictly increasing sequence $(n_k)$ of natural numbers such that $G_{\langle n_k\rangle}/G_{\langle n_k+1\rangle}$ is not $p$-compact for each $k\in\omega$. By Lemma~\ref{compact} (vi), either $G_{\langle n_k\rangle}/G_{\langle n_k+1\rangle}$ is not torsion or $(G_{\langle n_k\rangle}/G_{\langle n_k+1\rangle})[p]$ is infinite. We consider two cases.

{\sl Case 1.} There are infinitely many $k\in\omega$ such that $G_{\langle n_k\rangle}/G_{\langle n_k+1\rangle}$ is not torsion. Without loss of generality, we can assume that this holds for every $k\in\omega$. Note that $n_k>k$ for all $k\in\omega$. For each $k$, choose $x_k\in G_{\langle n_k\rangle}$ such that $x_k+G_{\langle n_k+1\rangle}$ has infinite order in $G_{\langle n_k\rangle}/G_{\langle n_k+1\rangle}$. Note that $\pi^{n_k}(x_k)=0$ and $\pi_{n_k}(mx_k)=m\pi_{n_k}(x_k)\ne 0$ for all $m\ne 0$. Let $K$ be the closure of $\langle x_0,x_1,\cdots\rangle$ in $G$. Then $K$ is a closed subgroup of $G$. Let $T_K$ be the group tree with $[T_K]=K$. We will complete the proof in this case with showing that $K\cong\Z^\omega$. 

Define $\phi:\Z^\omega\to K$ as $$\phi(f)=\sum_kf(k)x_k$$ for each $f\in\Z^\omega$. We first verify that $\phi(f)$ is a well-defined element of $\prod_n H_n$. For this, note that for any $n\in\omega$, letting $l\in\mathbb{N}_+$ be the least such that $n_l>n$, we have $$\pi_n(\phi(f))=\sum_kf(k)\pi_n(x_k)=\sum_{k<l}f(k)\pi_n(x_k)\in H_n.$$ So $\phi(f)\in\prod_n H_n$. We now verify that indeed $\phi(f)\in K$. For this, note similarly that for any $m\in\mathbb{N}_+$, letting $l\in\mathbb{N}_+$ be the least such that $n_l\geq m$, we have
$$ \pi^m(\phi(f))=\sum_{k<l}f(k)\pi^m(x_k)=
\pi^m\left(\sum_{k<l}f(k)x_k\right)\in T_K\cap H^m.$$
So $\phi(f)\in[T_K]=K$.

It is clear that $\phi$ is a homomorphism. We verify that $\phi$ is injective. For any   $f\ne 0$, let $i\in\omega$ be the least such that $f(i)\ne 0$. Then $\pi_{n_i}(\phi(f))=f(i)\pi_{n_i}(x_i)\ne 0$, and so $\phi(f)\neq 0$. 

Next we verify that $\phi$ is surjective. For this let $y\in K$ and let $(f_j)$ be a sequence of elements in $\Z^\omega$ with $\lim_j\phi(f_j)=y$. We can find an increasing sequence $(j_l)_{l>0}$ such that for all $j\geq j_l$, $\pi^{n_l}(\phi(f_j))=\pi^{n_l}(y)$. Thus for all $j\geq j_l$ and $n<n_l$,
$$ \pi_n(y)=\pi_n(\phi(f_j))=\sum_{k<l} f_j(k)\pi_n(x_k). $$
By induction on $k\in\omega$ we prove that $f_j(k)=f_{j'}(k)$ for all $j, j'\geq j_{k+1}$. For $k=0$ set $l=1$ and consider $\pi_{n_0}(y)$. For all $j, j'\geq j_1$, $f_j(0)\pi_{n_0}(x_0)=\pi_{n_0}(y)=f_{j'}(0)\pi_{n_0}(x_0)$. Since $\pi_{n_0}(x_0)\neq 0$, we get $f_j(0)=f_{j'}(0)$. In the inductive step, assume $k>0$ and for all $k'<k$ and $j, j'\geq j_k$, $f_j(k')=f_{j'}(k')$. Set $l=k+1$ and consider $\pi_{n_k}(y)$. Suppose $j, j'\geq j_{k+1}$. Since $j_{k+1}\geq j_k$, we have $f_j(k')=f_{j'}(k')$ for all $k'<k$. Now $\pi_{n_k}(y)$ equals both
$$\begin{array}{l} \displaystyle\sum_{k'<k} f_j(k')\pi_{n_k}(x_{k'})+f_j(k)\pi_{n_k}(x_k)=\sum_{k'<k} f_{j'}(k')\pi_{n_k}(x_{k'})+f_{j'}(k)\pi_{n_k}(x_k). 
\end{array}$$
It follows that $f_j(k)\pi_{n_k}(x_k)=f_{j'}(k)\pi_{n_k}(x_k)$. Since $\pi_{n_k}(x_k)\neq 0$, we get $f_j(k)=f_{j'}(k)$. This finishes the induction. Define $f(k)=f_{j_{k+1}}(k)$ for all $k\in\omega$. It is easy to check that $\phi(f)=y$.

Next we verify that $\phi$ is continuous. For this let $(f_j)$ tend to $\vec{0}$ in $\Z^\omega$. We can find a sequence $(j_l)$ such that for all $j\geq j_l$ and for all $k<l$, we have $f_j(k)=0$. Now fix $m\in\mathbb{N}_+$. Let $l\in\mathbb{N}_+$ be the least such that $n_l\geq m$. Then for $j\geq j_l$, we have
$$ \pi^m(\phi(f_j))=\pi^m\left(\sum_{k<l}f_j(k)x_k\right)=0. $$ This means $\lim_j\pi^m(\phi(f_j))=0$ for any $m\in\mathbb{N}_+$, and thus $\lim_j \phi(f_j)=\vec{0}$. This shows that $\phi$ is continuous. The continuity of $\phi^{-1}$ can be similarly shown. 

We have thus shown that $K$ and $\Z^\omega$ are isomorphic as topological groups.

{\sl Case 2.}  There are infinitely many $k\in\omega$ such that $(G_{\langle n_k\rangle}/G_{\langle n_k+1\rangle})[p]$ is infinite. Without loss of generality, we can assume that this holds for every $k\in\omega$. Since $G_{\langle n_k\rangle}/G_{\langle n_k+1\rangle}\cong\pi_{n_k}[G_{\langle n_k\rangle}]\le H_{n_k}$, we know that $\pi_{n_k}[G_{\langle n_k\rangle}][p]$ is infinite. It follows that, as a vector space over $\Z(p)$, $\pi_{n_k}[G_{\langle n_k\rangle}][p]$ has infinite dimension. Let $u^k_0,u^k_1,\cdots$ enumerate without repetition a basis of this vector space. Then for any $g\in\pi_{n_k}[G_{\langle n_k\rangle}][p]$, there is a unique sequence $(c_i(g))$ of elements in $\Z(p)$ such that for all but finitely many $i\in\omega$, $c_i(g)=0$, and $g=\sum_ic_i(g)u^k_i$. Call the finite set $\{i\in\omega:c_i(g)\ne 0\}$ the {\em support} of $g$, and denote it by $\supp(g)$.  For any $E\subseteq\omega$, we denote $g\!\upharpoonright\! E=\sum_{i\in E}c_i(g)u^k_i$. Then $\supp(g\!\upharpoonright\!E)\subseteq E$.

Fix a sequence of natural numbers $(k_l)$ such that $k_l\le l$ for all $l\in\omega$, and each natural number appears in it infinitely many times. For each $l\in\omega$, denote $m_l=n_{k_l}$. We will inductively find a sequence $(x_l)$ in $G$ such that for each $l\in\omega$:
\begin{enumerate}
\item[(a)] $x_l\in G_{\langle m_l\rangle}$,
\item[(b)] $0\ne \pi_{m_l}(x_l)\in\pi_{m_l}[G_{\langle m_l\rangle}][p]$, and
\item[(c)] $\supp(\pi_{n_k}(x_l))\cap\supp(\pi_{n_k}(x_{l'}))=\emptyset$ for $k\le l$ and $l'<l$.
\end{enumerate}

For $l=0$, we have $k_0=0$ and $m_0=n_0$. Choose any $x_0\in G_{\langle n_0\rangle}$ with $0\ne \pi_{n_0}(x_0)\in\pi_{n_0}[G_{\langle n_0\rangle}][p]$. For the inductive step, assume $l>0$ and suppose that $x_0,\cdots, x_{l-1}\in G$ have been chosen. For each $k\le l$, let $E_k=\bigcup_{l'<l}\supp(\pi_{n_k}(x_{l'}))$. Note that each $E_k$ is finite, while $\pi_{n_k}[G_{\langle n_k\rangle}][p]$ is infinite. By the pigeonhole principle, we can find $y,y'\in G_{\langle m_l\rangle}$ such that $\pi_{m_l}(y), \pi_{m_l}(y')\in\pi_{m_l}[G_{\langle m_l\rangle}][p]$ with $\pi_{m_l}(y)\ne \pi_{m_l}(y')$, but $\pi_{n_k}(y)\!\upharpoonright\!E_k=\pi_{n_k}(y')\!\upharpoonright\!E_k$ for each $k\le l$. Define $x_l=y-y'$. It is routine to check that $x_l$ satisfies clauses (a)--(c). This finishes the inductive construction.

For each $k\in\omega$, define $K_k=\langle\{\pi_{n_k}(x_l)\,:\,k_l=k\}\rangle$. For $l'<l$, if $k=k_{l'}=k_l$, we have $k=k_l\le l$, and $\supp(\pi_{n_k}(x_l))\cap\supp(\pi_{n_k}(x_{l'}))=\emptyset$ by (c). So $K_k=\bigoplus_{\{l:k_l=k\}}\langle \pi_{n_k}(x_l)\rangle\cong\bigoplus_\omega\Z(p)=\Z(p)^{<\omega}$.

For each $k\in\omega$, define $F_k=\bigcup_{g\in K_k}\supp(g)$. It follows from (c) that for any distinct $l,l'$ with $k_l=k$, we always have $\supp(\pi_{n_k}(x_l))\cap\supp(\pi_{n_k}(x_{l'}))=\emptyset$. In particular, if $k_{l'}\neq k$, then $\pi_{n_k}(x_{l'})\!\upharpoonright\!F_k=0$.

We now define two closed subgroups of $G$:
$$\begin{array}{l}
A=\{x\in G\,:\,\forall k\ \pi_{n_k}(x)\!\upharpoonright\! F_k\in K_k)\}, \\ \\
A_0=\{x\in G\,:\,\forall k\ \pi_{n_k}(x)\!\upharpoonright\!F_k=0)\}.
\end{array}$$
We will complete the proof in this case with showing  $$A/A_0\cong\prod_kK_k\cong(\Z(p)^{<\omega})^\omega.$$ 

Define $\psi:A\to\prod_kK_k$ as 
$$\psi(x)(k)=\pi_{n_k}(x)\!\upharpoonright\! F_k$$ for $x\in A$ and $k\in\omega$. It easy to see that $\psi$ is a continuous homomorphism with $\ker(\psi)=A_0$. Here we only verify that $\psi$ is surjective. For this let $y\in\prod_kK_k$. For any $k\in\omega$, we can write $$y(k)=\sum_{l\in I_k}c^k_l\pi_{n_k}(x_l)$$ where $I_k$ is a finite set of $l$ with $k_l=k$ and each $c^k_l\in\Z(p)$. Note that $\sum_{l\in I_k}c^k_lx_l\in G_{\langle n_k\rangle}$, so $\sum_k\sum_{l\in I_k}c^k_lx_l$ converges to an element $x\in G$. For any $k\in\omega$,
$$\pi_{n_k}(x)\!\upharpoonright\! F_k=\sum_{l\in I_k}c^k_l\pi_{n_k}(x_l)=y(k)\in K_k.$$
It follows that $x\in A$ and $\psi(x)=y$. 
\end{proof}

The following is a corollary of the proof of Theorem~\ref{main1technical}.

\begin{theorem} Let $G$ be a non-archimedean abelian Polish group and $(G_n)$ be any decreasing sequence of open subgroups that is a nbhd base of the identity element of $G$. Then the following are equivalent:
\begin{enumerate}
\item[\rm (i)] $G$ involves $\mathbb{Z}^\omega$.
\item[\rm (ii)] $G$ has a closed subgroup isomorphic to $\Z^\omega$.
\item[\rm (iii)] For infinitely many $n$, $G_n/G_{n+1}$ is not torsion.
\end{enumerate}
\end{theorem}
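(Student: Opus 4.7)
The plan is to prove the cycle (ii)$\Rightarrow$(i)$\Rightarrow$(iii)$\Rightarrow$(ii). The first implication is immediate: if $H\le G$ is closed and isomorphic to $\Z^\omega$, then the identity map on $H$ already exhibits the required quotient. For the rest, I would first establish a torsion analogue of Lemma~\ref{anybase}: for any two decreasing sequences $(G_n)$ and $(G'_n)$ of open subgroups, both nbhd bases of the identity of $G$, $G_n/G_{n+1}$ is non-torsion for infinitely many $n$ iff the same holds for $G'_n/G'_{n+1}$. The algebraic ingredient replacing Lemma~\ref{extansion} is the trivial fact that for abelian $L\le \Gamma$, $\Gamma$ is torsion iff both $L$ and $\Gamma/L$ are. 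The proof then mirrors that of Lemma~\ref{anybase}: assuming $G_n/G_{n+1}$ is torsion for all $n\ge N$, pick $N'$ with $G'_{N'}\subseteq G_N$, and for each $n\ge N'$ pick $k\ge N$ with $G_k\subseteq G'_{n+1}$; then $G_k\subseteq G'_{n+1}\subseteq G'_n\subseteq G_N$, and $G_N/G_k$ is torsion by iterated extensions, so $G'_n/G_k$ is torsion as a subgroup and $G'_n/G'_{n+1}$ is torsion as a quotient.

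For (i)$\Rightarrow$(iii), let $H\le G$ be closed with $\phi\colon H\to\Z^\omega$ a continuous surjective homomorphism; by the open mapping theorem for Polish groups $\phi$ is open. Set $K_n=H\cap G_n$. Suppose for contradiction that $G_n/G_{n+1}$ is torsion for all $n\ge N$. The natural injection $K_n/K_{n+1}\hookrightarrow G_n/G_{n+1}$ forces $K_n/K_{n+1}$ to be torsion for $n\ge N$. Openness of $\phi$ makes $(\phi(K_n))$ a decreasing nbhd base of $0$ in $\Z^\omega$ consisting of open subgroups, and the canonical surjection $K_n/K_{n+1}\twoheadrightarrow \phi(K_n)/\phi(K_{n+1})$ shows that $\phi(K_n)/\phi(K_{n+1})$ is torsion for $n\ge N$. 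Applying the invariance just established within $\Z^\omega$ to $(\phi(K_n))$ and to the standard nbhd base $L_n=\{x\in\Z^\omega:x(k)=0\text{ for }k<n\}$, one concludes that all but finitely many $L_n/L_{n+1}$ must be torsion; but $L_n/L_{n+1}\cong\Z$, a contradiction.

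For (iii)$\Rightarrow$(ii), embed $G$ as a closed subgroup of some $\prod_n H_n$ via Proposition~\ref{prop:abel} and consider the canonical nbhd base $(G_{\langle n\rangle})$. By the invariance lemma, infinitely many $G_{\langle n\rangle}/G_{\langle n+1\rangle}$ are non-torsion, which is precisely the hypothesis of Case~1 in the proof of Theorem~\ref{main1technical}. The explicit construction in that case produces a closed subgroup $K\le G$ topologically isomorphic to $\Z^\omega$. The only delicate point in the whole argument is the torsion-invariance lemma; once it is in hand, (i)$\Rightarrow$(iii) is a bookkeeping exercise with the open mapping theorem and (iii)$\Rightarrow$(ii) reduces to directly quoting the existing construction.
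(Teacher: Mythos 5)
Your proof is correct and follows essentially the same route as the paper's: cycling (ii)$\Rightarrow$(i)$\Rightarrow$(iii)$\Rightarrow$(ii), transferring the non-torsion of the successive quotients through a continuous open surjection onto $\Z^\omega$ for (i)$\Rightarrow$(iii), and invoking the Case~1 construction from the proof of Theorem~\ref{main1technical} for (iii)$\Rightarrow$(ii). The only presentational difference is that you isolate the torsion analogue of Lemma~\ref{anybase} as an explicit lemma and run (i)$\Rightarrow$(iii) contrapositively, whereas the paper inlines the same interleaving-of-bases argument and simply cites ``the proof of Lemma~\ref{anybase}.''
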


\begin{proof}
The implication (ii)$\Rightarrow$(i) is obvious. The implication (iii)$\Rightarrow$(ii) follows from the proof of Lemma~\ref{anybase} and the construction in Case 1 of the proof of Theorem~\ref{main1technical}. It remains to prove (i)$\Rightarrow$(iii). Without loss of generality, we may assume that there is a continuous homomorphism $\phi$ from $G$ onto $\Z^\omega$. Note that $\phi$ is also a quotient map from $G$ onto $\Z^\omega$ as topological spaces. It follows that, for each $n\in\omega$, $\phi[G_n]$ is open in $\Z^\omega$, since $\phi^{-1}(\phi[G_n])=G_n\ker(\phi)$ is open in $G$. For each $n\in\omega$, let $Z_n=(\Z^\omega)_{\langle n\rangle}$. Then   $(Z_n)$ is a decreasing sequence of open subgroups of $\Z^\omega$ that is also a nbhd base of the identity element of $\Z^\omega$. Thus, for each $n\in\omega$, there exist $k\in\omega$ and $N>n$ such that $\phi[G_N]\leq Z_{k+1}\leq Z_k\leq \phi[G_n]$. Since $Z_k/Z_{k+1}$ is not torsion, it follows that there is $n\leq m<N$ such that $\phi[G_m]/\phi[G_{m+1}]$ is not torsion. But $\phi[G_m]/\phi[G_{m+1}]$ is a homomorphic image of $G_m/G_{m+1}$, and therefore $G_m/G_{m+1}$ is not torsion. 
\end{proof}

Recently Malicki \cite{Ma2} also studied tame non-archimedean abelian Polish groups. Among other things, his main results gave a characterization of tameness in the quasi-reduced case which is derivable from our Theorem~\ref{main1technical}.

\section{Orbit equivalence relations of tame groups}

In this section we study the orbit equivalence relations induced by actions of tame non-archimedean abelian Polish groups. We will first give some upper bounds on the height of the group trees and coset trees of such groups, and then obtain upper bounds on the descriptive complexity of the orbit equivalence relations.

We will concentrate on a special representation of tame non-archimedean abelian Polish groups as closed subgroups of product groups.

\begin{lemma}\label{rearrangement} Let $G$ be a tame non-archimedean abelian Polish group. Then there is a sequence $(H_n)$ of countable discrete abelian groups such that $G$ is isomorphic to a closed subgroup of $\prod_n H_n$, and, as such, $G$ satisfies that for all $n\in\mathbb{N}_+$, $G_{\langle n\rangle}/G_{\langle n+1\rangle}$ is torsion.
\end{lemma}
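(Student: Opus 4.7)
The plan is to start with an arbitrary representation of $G$ as a closed subgroup of a product $\prod_n H_n$ of countable discrete abelian groups (guaranteed by Proposition~\ref{prop:abel}(iii)), and then perform a cosmetic rearrangement of the factors so that the required tail-torsion condition holds throughout.

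First, since $G$ is tame, I would invoke Theorem~\ref{main1technical} clause (2), applied to the nbhd base $(G_{\langle n\rangle})_{n\in\omega}$ of $\vec{0}$, to conclude that $G_{\langle n\rangle}/G_{\langle n+1\rangle}$ is torsion for all but finitely many $n$. Fix $N\in\omega$ such that this holds for every $n\geq N$.

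Next, I would repackage the first $N$ factors into a single factor. Define $H'_0=H_0\times\cdots\times H_{N-1}$, which is still a countable discrete abelian group, and set $H'_n=H_{N+n-1}$ for $n\geq 1$. The natural topological group isomorphism $\prod_n H_n\cong\prod_n H'_n$ that groups the first $N$ coordinates together carries $G$ onto a closed subgroup $G'$ of $\prod_n H'_n$ isomorphic to $G$ as a topological group.

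Finally, I would verify the torsion condition under the new representation. Writing $\pi'_k$ for the new projection maps and unpacking the definition of $G'_{\langle n\rangle}$, one checks directly that $G'_{\langle 1\rangle}=G_{\langle N\rangle}$ and, more generally, $G'_{\langle n\rangle}=G_{\langle N+n-1\rangle}$ for each $n\in\mathbb{N}_+$. Consequently $G'_{\langle n\rangle}/G'_{\langle n+1\rangle}\cong G_{\langle N+n-1\rangle}/G_{\langle N+n\rangle}$ is torsion for every $n\geq 1$ by the choice of $N$. The only quotient not controlled is $G'_{\langle 0\rangle}/G'_{\langle 1\rangle}$, but that index falls outside $\mathbb{N}_+$ and is not demanded by the lemma. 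There is no substantial obstacle here; the argument is essentially a reindexing trick made possible by the tail-torsion conclusion of Theorem~\ref{main1technical}.
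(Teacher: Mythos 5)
Your proof is correct and is essentially the same argument as the paper's: both invoke Theorem~\ref{main1technical} to find a tail index past which the quotients $G_{\langle n\rangle}/G_{\langle n+1\rangle}$ are torsion, then pack the finitely many initial factors into a single new zeroth factor $H_0$, leaving the tail factors in place; the identification $G'_{\langle n\rangle}=G_{\langle N+n-1\rangle}$ for $n\in\mathbb{N}_+$ then gives the claim. (The paper phrases it with $m$ = the largest bad index and groups $\Gamma_0\times\cdots\times\Gamma_m$, which is your $N=m+1$; the only cosmetic difference is that your $N$ could in principle be $0$, giving an empty product for $H'_0$, but that is harmless and easily avoided by taking $N\geq 1$.)
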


\begin{proof}
By Proposition~\ref{prop:abel}, there is a sequence $(\Gamma_n)$ of countable discrete abelian groups such that $G\leq \prod_n\Gamma_n$ is a closed subgroup. Since $G$ is tame, by Theorem~\ref{main1technical} we have that for all but finitely many $n\in\omega$, $G_{\langle n\rangle}/G_{\langle n+1\rangle}$ is torsion. Note that $G_{\langle n\rangle}$ here is defined as a subgroup of $\prod_n \Gamma_n$. 

Let $m\in\omega$ be the largest $n$ such that $G_{\langle n\rangle}/G_{\langle n+1\rangle}$ is not torsion, if such $n$ exists; and let $m=0$ otherwise. Define $H_0=\Gamma_0\times \dots\times \Gamma_m$ and for $n\in\mathbb{N}_+$, define $H_n=\Gamma_{m+n}$. Then $\prod_n \Gamma_n$ and $\prod_n H_n$ are isomorphic topological groups, and $G$ is still isomorphic to a closed subgroup of $\prod_n H_n$.

As a closed subgroup of $\prod_n H_n$, $G$ satisfies that $G_{\langle n\rangle}/G_{\langle n+1\rangle}$ is torsion for all $n\in\mathbb{N}_+$.
\end{proof}

We will need the following simple property guaranteed by this kind of representation.

\begin{lemma} \label{finiteorderextension} Let $(H_n)$ be a sequence of countable discrete abelian groups and $G\leq \prod_n H_n$ be a closed subgroup satisfying that for all $n\in\mathbb{N}_+$, $G_{\langle n\rangle}/G_{\langle n+1\rangle}$ is torsion. Let $S\subseteq T_G$ be any group tree and $\sigma\in S$ with finite order. Then any extension $\tau\supseteq \sigma$ in $S$ also has finite order.
\end{lemma}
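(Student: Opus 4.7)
The plan is to induct on the length of the extension. Write $\sigma \in S \cap H^m$ of order $d$, where necessarily $m \in \mathbb{N}_+$ since $\sigma \in S$. I will prove by induction on $n \geq m$ that every $\tau \in S \cap H^n$ with $\tau\!\upharpoonright\! m = \sigma$ has finite order. The base case $n = m$ is immediate because then $\tau = \sigma$.

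For the inductive step, suppose the claim holds at level $n$ and let $\tau \in S \cap H^{n+1}$ extend $\sigma$. Because $S$ is a tree, $\tau\!\upharpoonright\! n \in S \cap H^n$ still extends $\sigma$, so by the inductive hypothesis it has some finite order $d_n$. Since $S \cap H^{n+1}$ is a subgroup of $H^{n+1}$ (as $S$ is a group tree), $d_n\tau \in S \cap H^{n+1}$, and its projection to $H^n$ is $d_n(\tau\!\upharpoonright\! n) = 0$, so the first $n$ coordinates of $d_n\tau$ all vanish.

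Now I exploit the ambient hypothesis. Because $S \subseteq T_G$, the element $d_n\tau$ lies in $T_G \cap H^{n+1} = \pi^{n+1}[G]$, so there exists $y \in G$ with $\pi^{n+1}(y) = d_n\tau$. The vanishing of the first $n$ coordinates forces $y \in G_{\langle n \rangle}$, and then $\pi_n(y) = d_n\pi_n(\tau)$ lies in $\pi_n[G_{\langle n\rangle}] \cong G_{\langle n\rangle}/G_{\langle n+1\rangle}$. Since $n \geq m \geq 1$, the hypothesis gives that this quotient is torsion, so $d_n\pi_n(\tau)$ has finite order $e$ in $H_n$. Hence $ed_n\tau$ is zero in every coordinate of $H^{n+1}$, so $\tau$ has order dividing $ed_n < \infty$.

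I do not anticipate a real obstacle here: the proof is essentially bookkeeping, and the key non-trivial ingredient — namely that every lift from $T_G$ at level $n \geq 1$ with vanishing lower coordinates comes from $G_{\langle n\rangle}$, whose projection to $H_n$ is torsion — is delivered directly by the standing hypothesis on $G_{\langle n\rangle}/G_{\langle n+1\rangle}$. The only subtlety worth flagging is that the induction must start at $m \geq 1$ rather than $m = 0$, which is guaranteed by the convention that elements of $S$ have length at least $1$.
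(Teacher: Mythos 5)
Your proof is correct and is essentially the same argument as the paper's: the key step in both is to multiply by the finite order of the shorter prefix to land in $G_{\langle n\rangle}$, then invoke that $G_{\langle n\rangle}/G_{\langle n+1\rangle}$ is torsion. The paper phrases it as a minimal-counterexample contradiction reducing to a length-one extension, while you run a direct induction on length; the mechanism is identical.
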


\begin{proof} Toward a contradiction, assume that $\tau\supseteq \sigma$ has infinite order and has minimal length among such extensions. Without loss of generality, we may assume $\lh(\tau)=\lh(\sigma)+1$. Suppose $\lh(\sigma)=m\in\mathbb{N}_+$ and $a\in\mathbb{N}_+$ is the order of $\sigma$. Since $S$ is a group tree, we have $k\tau\supseteq k\sigma$ for all $k\in\mathbb{Z}$. In particular, $a\tau\supseteq a\sigma=0\in S\cap H^m\subseteq T_G\cap H^m$. Note that $a\tau\in T_G\cap H^{m+1}$ still has infinite order. Let $x\in G=[T_G]$ be any element with $x\supseteq a\tau$. Then $x\in G_{\langle m\rangle}$, $x$ has infinite order, and for any nonzero $k\in\mathbb{Z}$, $kx\not\in G_{\langle m+1\rangle}$. This implies that $G_{\langle m\rangle}/G_{\langle m+1\rangle}$ is not torsion, contrary to our assumption. 
\end{proof}

In our proofs below we will make use of the following facts proved in \cite{solecki}.

\begin{lemma}[Solecki \cite{solecki}]\label{lemma81} Let $S$ be any group tree and $\sigma\in S$ with finite order and $r(\sigma)=\alpha<\omega_1$. Then there is a prime $p$ and $\tau\in S$ such that the order of $\tau$ is a power of $p$ and $\alpha\leq r(\tau)<\omega_1$.
\end{lemma}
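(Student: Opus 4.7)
The plan is to decompose $\sigma$ via the Chinese Remainder Theorem into a sum of elements of prime-power order (each a $\Z$-multiple of $\sigma$), and then argue that at least one summand must have rank between $\alpha$ and $\omega_1$.

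First I would record the key structural fact that $D^\beta(S)$ is itself a group tree for every ordinal $\beta$, proved by transfinite induction on $\beta$. The successor step uses that if $\sigma,\rho\in D^\beta(S)\cap H^m$ both extend to some $\sigma',\rho'\in D^\beta(S)\cap H^{m+1}$, then $\sigma'+\rho'\in D^\beta(S)$ (as $D^\beta(S)\cap H^{m+1}$ is a subgroup) and witnesses $\sigma+\rho\in D^{\beta+1}(S)$; similarly $-\sigma\in D^{\beta+1}(S)$. Limit stages are immediate from the intersection definition. The immediate corollary I need is: for any $\sigma\in S\cap H^m$ and any integer $k$, one has $k\sigma\in D^\beta(S)$ whenever $\sigma\in D^\beta(S)$, and hence $r(k\sigma)\geq r(\sigma)$.

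Next, let $n$ be the order of $\sigma$ and write $n=p_1^{a_1}\cdots p_\ell^{a_\ell}$. By CRT I can pick integers $m_1,\dots,m_\ell$ with $m_i\equiv 1\pmod{p_i^{a_i}}$ and $m_i\equiv 0\pmod{p_j^{a_j}}$ for $j\neq i$, so that $\sum_i m_i\equiv 1\pmod n$. Setting $\sigma_i=m_i\sigma$, each $\sigma_i$ lies in $S\cap H^{\lh(\sigma)}$ (by the subgroup property), has order exactly $p_i^{a_i}$, and $\sigma_1+\cdots+\sigma_\ell=\sigma$. By the corollary above, $r(\sigma_i)\geq r(\sigma)=\alpha$ for every $i$.

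It remains to rule out the possibility that every $\sigma_i$ has rank $\omega_1$. If $\sigma_i\in D^\infty(S)$ for all $i$, then because $D^\infty(S)=D^{\omega_1}(S)$ is a group tree (by the first step), the sum $\sigma=\sum_i\sigma_i$ would also lie in $D^\infty(S)$, contradicting $r(\sigma)=\alpha<\omega_1$. Hence some $\sigma_i$ satisfies $\alpha\leq r(\sigma_i)<\omega_1$, and this $\sigma_i$ has prime-power order $p_i^{a_i}$, giving the required $\tau$ and prime $p=p_i$. The only real content is the inductive verification that the derivatives preserve the group structure; everything else is a routine application of CRT and the subgroup-closure of each $D^\beta(S)\cap H^m$.
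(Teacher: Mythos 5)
Your proof is correct, and it is essentially the argument the paper points to in Solecki's Lemma 8: establish that every derivative $D^\beta(S)$ is again a group tree (so each $D^\beta(S)\cap H^m$ is closed under the group operations, and in particular under taking $\Z$-multiples), decompose $\sigma$ via the Chinese Remainder Theorem into integer multiples $\sigma_i = m_i\sigma$ of prime-power order summing to $\sigma$, deduce $r(\sigma_i)\geq r(\sigma)=\alpha$ for each $i$ from the subgroup closure, and conclude that not all $\sigma_i$ can lie in $D^\infty(S)$, since $D^\infty(S)$ is a group tree and would then contain $\sigma=\sum_i\sigma_i$. Your preliminary transfinite induction that each $D^\beta(S)$ is a group tree is exactly the structural fact the argument hinges on, and the CRT step is carried out correctly, including the check that $\sigma_i$ has order precisely $p_i^{a_i}$.
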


\begin{proof}
The proof is identical to the penultimate paragraph in the proof of Lemma 8 in \cite{solecki}.
\end{proof}

\begin{lemma}[Solecki \cite{solecki}] \label{lemma82} Let $p$ be a prime. Let $S$ be a group tree. Let $\sigma\in S$ satisfy that $r(\sigma)<\omega_1$, the order of $\sigma$ is a power of $p$, and any extension $\tau\supseteq \sigma$ in $S$ also has finite order. Then for any $\alpha<r(\sigma)$ there is $\tau\supseteq \sigma$ such that $r(\tau)=\alpha$ and the order of $\tau$ is also a power of $p$.
\end{lemma}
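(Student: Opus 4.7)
The plan is to replace $S$ with its $p$-primary subtree, verify that ranks agree above $\sigma$, and then invoke the standard fact that in a well-founded tree every ordinal below $r(\sigma)$ is realized by an extension of $\sigma$. Concretely, set
$$ S^{(p)}=\{\tau\in S\,:\,\text{the order of }\tau\text{ is a power of }p\}. $$
On each level $m$ this is the $p$-primary component of the abelian group $S\cap H^m$, hence a subgroup; and since truncation $\pi^m$ is a group homomorphism it preserves $p$-power order. Thus $S^{(p)}$ is a group subtree of $S$ containing $\sigma$.

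Write $S_\sigma=\{\tau\in S\,:\,\tau\supseteq\sigma\}$. The hypothesis that every $\tau\in S_\sigma$ has finite order lets me define a $p$-primary projection $P\colon S_\sigma\to S^{(p)}\cap S_\sigma$: given $\tau\in S_\sigma$ of order $p^im$ with $\gcd(m,p)=1$ (necessarily $i\geq k$, since the order $p^k$ of $\sigma=\pi^{\lh(\sigma)}(\tau)$ divides that of $\tau$), choose via the Chinese Remainder Theorem an integer $c$ with $c\equiv 1\pmod{p^i}$ and $c\equiv 0\pmod m$, and set $P(\tau)=c\tau$. Then $P(\tau)$ has $p$-power order, $P(\sigma)=\sigma$, and --- by uniqueness of the primary decomposition in finite abelian groups --- $\pi^m\circ P=P\circ\pi^m$ on $S_\sigma$. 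In particular, $P$ sends strict extensions to strict extensions: $\tau'\supsetneq\tau$ in $S_\sigma$ implies $P(\tau')\supsetneq P(\tau)$ in $S^{(p)}\cap S_\sigma$.

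A straightforward transfinite induction on $\alpha$ then shows that $r_S(\tau)\geq\alpha$ implies $r_{S^{(p)}}(P(\tau))\geq\alpha$ for every $\tau\in S_\sigma$; combined with the trivial inequality $r_{S^{(p)}}(\rho)\leq r_S(\rho)$ for $\rho\in S^{(p)}$, this yields $r_{S^{(p)}}(\rho)=r_S(\rho)$ for every $\rho\in S^{(p)}\cap S_\sigma$, and in particular $r_{S^{(p)}}(\sigma)=r_S(\sigma)<\omega_1$. Since the rank is countable, the portion of $S^{(p)}$ above $\sigma$ is well-founded, so for any $\alpha<r_{S^{(p)}}(\sigma)$ I can take a $\subseteq$-maximal element $\tau$ of the nonempty set $\{\rho\in S^{(p)}\cap S_\sigma\,:\,r_{S^{(p)}}(\rho)\geq\alpha\}$: by maximality $r_{S^{(p)}}(\tau)=\alpha$, so $\tau\supseteq\sigma$ is a $p$-power order element with $r_S(\tau)=r_{S^{(p)}}(\tau)=\alpha$. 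The main technical point I expect to need care is the commutativity $\pi^m\circ P=P\circ\pi^m$, since this is precisely where the hypothesis that every extension of $\sigma$ in $S$ is torsion gets used --- without it, $P$ would not be globally defined on $S_\sigma$.
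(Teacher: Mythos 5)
Your proof is correct, and its core technical step --- multiplying an extension $\tau$ of finite order $p^i m$ (with $\gcd(m,p)=1$) by a CRT-chosen integer $c$ with $c\equiv 1\pmod{p^i}$, $c\equiv 0\pmod m$ to isolate the $p$-primary part, compatibly with the level projections $\pi^m$ --- is exactly the device underlying the reference to Solecki's Lemma 8 that the paper cites in place of a proof. Your packaging of this as a rank-preserving retraction $P$ onto the $p$-primary group subtree $S^{(p)}$ followed by the routine well-founded--tree fact is a slightly more structural presentation of the same argument rather than a genuinely different route; in particular you correctly locate the role of the torsion hypothesis (it is what makes $P$ totally defined on $S_\sigma$) and correctly derive well-foundedness of $S^{(p)}$ above $\sigma$ from $r(\sigma)<\omega_1$.
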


\begin{proof} The proof is identical to the second paragraph in the proof of Lemma 8 in \cite{solecki}.
\end{proof}

\begin{theorem}\label{rankbound} Let $(H_n)$ be a sequence of countable discrete abelian groups and let $G\leq \prod_n H_n$ be a tame closed subgroup such that for all $n\in\mathbb{N}_+$, $G_{\langle n\rangle}/G_{\langle n+1\rangle}$ is torsion. Then for any group tree $S\subseteq T_G$, $\het(S)\leq \omega\cdot 3$; for any coset tree $S\subseteq T_G$, $\het(S)<\omega\cdot 4$.
\end{theorem}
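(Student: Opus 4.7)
The claim for coset trees follows immediately from the claim for group trees: given a coset tree $S \subseteq T_G$, the canonical group tree $\Gamma(S) \subseteq T_G$ satisfies $\het(\Gamma(S)) \leq \omega \cdot 3$ by the group tree statement, whence by Lemma~\ref{lem:c2g} we have $\het(S) < \het(\Gamma(S)) + \omega \leq \omega \cdot 4$. The bulk of the proof is therefore the group tree bound.

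For the group tree bound, I would argue by contradiction. Suppose $S \subseteq T_G$ is a group tree with $\het(S) > \omega \cdot 3$, and fix $\sigma \in S$ with $r(\sigma) = \omega \cdot 3$. The overall strategy is to construct from such a $\sigma$ a prime $p$ and an infinite collection of non-trivial elements of $S$, all of $p$-power order, at pairwise distinct levels $n$, each of limit rank in $[\omega, \omega_1)$; Lemma~\ref{notpcompact} will then imply that $G_{\langle n\rangle}/G_{\langle n+1\rangle}$ is not $p$-compact at infinitely many levels, contradicting tameness via the characterization (*) from the proof of Theorem~\ref{main1technical}. The first step is to reduce to a finite-order element of rank $\omega \cdot 3$: the torsion hypothesis on $G_{\langle n\rangle}/G_{\langle n+1\rangle}$ for $n \geq 1$ implies $\pi^m[G_{\langle 1\rangle}]$ is torsion for every $m$, so every element of the group subtree $S_0 = \{\tau \in S : \pi_0(\tau) = 0\}$ has finite order. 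The translation $\tau \mapsto \sigma + \tau$ is a length- and extension-preserving isomorphism from $S_0$ onto the coset $\sigma + S_0 = \{\rho \in S : \pi_0(\rho) = \pi_0(\sigma)\}$; since every proper extension of $\sigma$ in $S$ lies in $\sigma + S_0$, this gives $r_{S_0}(0_{\lh(\sigma)}) = r_S(\sigma) = \omega \cdot 3$, and a transfinite induction yields $D^\alpha(S_0) = D^\alpha(S) \cap S_0$ for every $\alpha$.

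The next step is to apply Lemma~\ref{lemma81} (whose construction via prime-component multiples preserves length) and, if necessary, refine using Lemma~\ref{lemma82} with target $\alpha = \omega \cdot 3$ (a limit ordinal) to produce a prime $p$ and $\bar\sigma \in S_0$ of $p$-power order with $r(\bar\sigma) = \omega \cdot 3$. Since every extension of $\bar\sigma$ in $S$ has finite order by Lemma~\ref{finiteorderextension}, Lemma~\ref{lemma82} applies to $\bar\sigma$ for any target $\alpha < \omega \cdot 3$. Produce, for each $k \in \omega$, an element $\tau_k \supseteq \bar\sigma$ with $r(\tau_k) = \omega \cdot 2 + k$ and $p$-power order; then, for $k \geq 1$, further produce $\rho_k \supseteq \tau_k$ with $r(\rho_k) = \omega \cdot 2$ and $p$-power order (taking $\rho_0 = \tau_0$). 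Choosing Lemma~\ref{lemma82}'s outputs to avoid the trivial element whenever possible ensures each $\rho_k$ is non-trivial, so Lemma~\ref{notpcompact} yields that $G_{\langle \lh(\rho_k)\rangle}/G_{\langle \lh(\rho_k)+1\rangle}$ is not $p$-compact for every $k$.

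The crux and main obstacle is to show that $\{\lh(\rho_k) : k \in \omega\}$ is unbounded. If it were bounded by some $M$, then so would be $\{\lh(\tau_k)\}$, and by pigeonhole some level $m^* > \lh(\bar\sigma)$ would contain infinitely many $\tau_k$'s. The key structural fact---proved by a straightforward transfinite induction starting from $D(S) \cap H^m = \pi^m[S \cap H^{m+1}]$---is that $D^\alpha(S) \cap H^m$ is a subgroup of $H^m$ for every ordinal $\alpha$. Consequently the decreasing chain of non-empty subgroups $(D^{\omega \cdot 2 + k}(S) \cap H^{m^*})_{k \in \omega}$ has intersection containing $0_{m^*}$, placing $0_{m^*}$ in $D^{\omega \cdot 3}(S)$. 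As $0_{m^*}$ is a proper extension of $0_{\lh(\bar\sigma)}$ and the latter has rank exactly $\omega \cdot 3$ by the translation argument applied to $\bar\sigma$, this forces $r(0_{\lh(\bar\sigma)}) \geq \omega \cdot 3 + 1$, a contradiction. Hence the lengths are unbounded, producing infinitely many non-$p$-compact levels and contradicting tameness.
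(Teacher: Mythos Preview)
Your argument has a genuine gap in the unboundedness step, stemming from a false claim about the rank of the identity element. In any group tree $T$, the identity branch $e_H$ lies in $[T]$, so $0_m = \pi^m(e_H) \in D^\infty(T)$ and hence $r_T(0_m) = \omega_1$ for \emph{every} level $m$. Thus your assertion that $r_{S_0}(0_{\lh(\sigma)}) = \omega\cdot 3$ (and later that $r(0_{\lh(\bar\sigma)}) = \omega\cdot 3$) is simply false, and the contradiction you derive at the end---that $0_{m^*} \in D^{\omega\cdot 3}(S)$ forces $r(0_{\lh(\bar\sigma)}) \geq \omega\cdot 3 + 1$---never materializes. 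Your ``key structural fact'' that $D^\alpha(S)\cap H^m$ is always a subgroup is correct but unhelpful here: it only re-proves the trivial statement that $0_{m^*}$ survives every derivative. Relatedly, the ``translation $\tau\mapsto \sigma+\tau$'' you invoke is not well-defined between elements of different lengths, and the image $\{\rho\in S:\pi_0(\rho)=\pi_0(\sigma)\}$ is not in general a translate of $S_0$ by anything in $S$.

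The paper handles both issues differently. For the reduction to finite order, it passes to the coset tree $S_\sigma$ of extensions of $\sigma$ and then to its canonical group tree $\Gamma(S_\sigma)\subseteq T_G$; since $0\in\Gamma(S_\sigma)$ has order $1$, Lemma~\ref{finiteorderextension} forces every element of $\Gamma(S_\sigma)$ to have finite order, and Lemma~\ref{lem:c2g} gives $\het(\Gamma(S_\sigma))\geq\omega\cdot 3$. For the unboundedness of levels---which is where your attempt breaks---the paper avoids any argument about the identity and instead descends \emph{one rank at a time}: starting from a $p$-power-order element $\tau$ of rank $\omega\cdot 2$, for each $n$ it first jumps (one application of Lemma~\ref{lemma82}) to some $\tau_0\supsetneq\tau$ of rank $\omega+n$, and then applies Lemma~\ref{lemma82} $n$ more times to reach $\tau_n$ of rank exactly $\omega$. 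Since each application strictly increases the length, $\lh(\tau_n)>n$, which directly yields an element of $p$-power order and limit rank $\omega$ at an arbitrarily large level. That trick---trading rank decrements for length increments---is the missing idea in your proof.
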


\begin{proof} Toward a contradiction, assume $S\subseteq T_G$ is a group tree with $\het(S)> \omega\cdot 3$. Let $\sigma\in S$ be such that $r(\sigma)=\omega\cdot 3$. We consider two cases.

{\sl Case 1.} The order of $\sigma$ is finite. In particular we have an element of $S$ whose order is finite and whose rank is $\geq \omega\cdot 2$. This will turn out to be sufficent to derive a contradiction. By Lemma~\ref{finiteorderextension} any extension in $S$ of an element of finite order in $S$ also has finite order. By Lemmas~\ref{lemma81} and \ref{lemma82} there exist a prime $p$ and $\tau\in S$ such that the order of $\tau$ is a power of $p$ and $r(\tau)=\omega\cdot 2$. We claim that there are infinitely many $m\in\omega$ such that there exists $\sigma_m\in S\cap H^m$ so that $r(\sigma_m)=\omega$ and the order of $\sigma_m$ is a power of $p$. Granting the claim, by Lemma~\ref{notpcompact} $G_{\langle m\rangle}/G_{\langle m+1\rangle}$ is not $p$-compact for infinitely many $m$, which by Theorem~\ref{main1technical} contradicts our assumption that $G$ is tame.

To prove the claim, fix an arbitrary $n\in\omega$. Apply Lemma~\ref{lemma82} to find $\tau_0\supseteq \tau$ in $S$ so that $r(\tau_0)=\omega+n$ and the order of $\tau_0$ is a power of $p$. Repeatedly applying Lemma~\ref{lemma82} again, we can inductively define a sequence $\tau_0\subseteq \tau_1\subseteq \dots \subseteq\tau_n$ such that for each $i\leq n$, $r(\tau_i)=\omega+n-i$ and the order of $\tau_i$ is a power of $p$. Then $\lh(\tau_n)>n$, $r(\tau_n)=\omega$, and the order of $\tau_n$ is a power of $p$, as required.

{\sl Case 2.} The order of $\sigma$ is infinite. Consider
$$ S_\sigma=\{\tau\in S\,:\, \mbox{either $\tau\subseteq \sigma$ or $\sigma\subseteq \tau$}\}. $$
Then $S_\sigma\subseteq S\subseteq T_G$ is a coset tree. Moreover, $r_{S_\sigma}(\sigma)=r_S(\sigma)=\omega\cdot 3$, hence $\het(S_\sigma)>\omega\cdot 3$. Now $\Gamma(S_\sigma)$ is a group tree, $\Gamma(S_\sigma)\subseteq S\subseteq T_G$, and by Lemma~\ref{lem:c2g}, $\het(\Gamma(S_\sigma))\geq \omega\cdot 3$. Note that $0=-\sigma+\sigma\in \Gamma(S_\sigma)$ is an element of order $1$. By Lemma~\ref{finiteorderextension} every element of $\Gamma(S_\sigma)$ has finite order. In particular there is an element of $\Gamma(S_\sigma)$ whose order is finite and whose rank is $\omega\cdot 2$. We are now in the same situation as in Case 1, which led to a contradiction.

The second part of the theorem, which concerns the height of a coset tree, follows from Lemma~\ref{lem:c2g}.
\end{proof}

\begin{corollary}\label{generalrank} Let $(H_n)$ be a sequence of countable discrete abelian groups and $G\leq \prod_n H_n$ a tame closed subgroup. Then for any group tree or coset tree $S\subseteq T_G$, $\het(S)<\omega\cdot 4$.
\end{corollary}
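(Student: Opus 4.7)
The plan is to reduce Corollary~\ref{generalrank} to Theorem~\ref{rankbound} by passing to a rearranged representation of $G$ in which the required torsion hypothesis holds, and then accounting for the finite-level shift that the rearrangement introduces.

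First, I would feed the given embedding $G\leq\prod_n H_n$ into the construction from the proof of Lemma~\ref{rearrangement}: let $m$ be the largest index for which $G_{\langle m\rangle}/G_{\langle m+1\rangle}$ is not torsion, or $m=0$ if no such index exists; set $H^\#_0=H_0\times\cdots\times H_m$ and $H^\#_n=H_{m+n}$ for $n\in\mathbb{N}_+$; and let $\Phi:\prod_n H_n\to\prod_n H^\#_n$ be the canonical topological group isomorphism. Then $G^\#=\Phi[G]$ is a closed subgroup of $\prod_n H^\#_n$ which satisfies $G^\#_{\langle k\rangle}/G^\#_{\langle k+1\rangle}$ torsion for every $k\in\mathbb{N}_+$, which is exactly the hypothesis of Theorem~\ref{rankbound}. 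Of course $G^\#$ remains tame, being topologically isomorphic to $G$.

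Next, given any group tree (respectively coset tree) $S\subseteq T_G$, I would define $S^\#\subseteq T_{G^\#}$ by using the group isomorphism $H^{m+k}\cong(H^\#)^k$ (for each $k\in\mathbb{N}_+$) to transport $S\cap H^{m+k}$ to $S^\#\cap (H^\#)^k$. Because this identification sends subgroups to subgroups and cosets to cosets, $S^\#$ is again a group tree (respectively coset tree); and because it sends approximations of elements of $G$ to approximations of elements of $G^\#$, we have $S^\#\subseteq T_{G^\#}$. Theorem~\ref{rankbound} therefore gives $\het(S^\#)\leq\omega\cdot 3$ in the group tree case and $\het(S^\#)<\omega\cdot 4$ in the coset tree case.

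Finally, I would compare ranks in $S$ and $S^\#$. The identification preserves the strict extension relation on the corresponding portions of the trees, so a straightforward transfinite induction gives $r_S(\sigma)=r_{S^\#}(\sigma^\#)$ for every $\sigma\in S$ with $\lh(\sigma)\geq m+1$. For $\sigma\in S$ with $\lh(\sigma)\leq m$, induction on the level difference using the inequality $r_S(\sigma)\leq \sup_{\tau\in S,\,\sigma\subsetneq\tau,\,\lh(\tau)=\lh(\sigma)+1}r_S(\tau)+1$ yields $r_S(\sigma)\leq \sup\{r_S(\tau):\tau\in S,\sigma\subseteq\tau,\lh(\tau)=m+1\}+(m+1-\lh(\sigma))$, and the inner supremum is bounded by $\het(S^\#)$. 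Combining both cases, $\het(S)\leq\het(S^\#)+(m+1)$. Since every ordinal in $[\omega\cdot 3,\omega\cdot 4)$ is of the form $\omega\cdot 3+\ell$ with $\ell<\omega$, adding the finite constant $m+1$ keeps us strictly below $\omega\cdot 4$ in both the group tree and coset tree cases. The only substantive bookkeeping is verifying that the group/coset structure and the extension relation genuinely survive the identification after the finite shift; everything else is an immediate appeal to Theorem~\ref{rankbound}.
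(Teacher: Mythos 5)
Your proposal is correct and follows the same route as the paper: rearrange the product via Lemma~\ref{rearrangement}, apply Theorem~\ref{rankbound}, and absorb the finite shift $m+1$ coming from the collapsed initial levels (the paper handles coset trees by applying Lemma~\ref{lem:c2g} to $\Gamma(S)$, while you transfer the coset tree directly and invoke the coset-tree clause of Theorem~\ref{rankbound}; since that clause is itself deduced from Lemma~\ref{lem:c2g}, the two are equivalent). Your explicit rank-transfer argument $\het(S)\leq\het(S^\#)+(m+1)$ makes precise the bookkeeping that the paper's one-line proof leaves implicit.
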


\begin{proof}
Combining a rearrangement in the proof of Lemma~\ref{rearrangement} and Theorem~\ref{rankbound}, we get that any group tree $S\subseteq T_G$ has height $<\omega\cdot 4$. The bound for coset trees follows from Lemma~\ref{lem:c2g}.
\end{proof}

\begin{corollary}\label{alltorsion} Let $(H_n)$ be a sequence of countable discrete abelian groups and $G\leq\prod_n H_n$ a tame closed subgroup. Suppose for all $n\in\omega$, $G_{\langle n\rangle}/G_{\langle n+1\rangle}$ is torsion. Then for any group tree $S\subseteq T_G$, $\het(S)\leq \omega\cdot 2$; for any coset tree $S\subseteq T_G$, $\het(S)<\omega\cdot 3$.
\end{corollary}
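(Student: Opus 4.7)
The plan is to repeat the Case~1 argument from the proof of Theorem~\ref{rankbound}, using the stronger torsion hypothesis to drop the bound by one $\omega$-level and to make Case~2 entirely vacuous.

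First, I would show that under the hypothesis of the corollary every element of $T_G$ has finite order. Since every $\sigma\in T_G\cap H^1$ lies in $\pi_0[G]\cong G_{\langle 0\rangle}/G_{\langle 1\rangle}$, which is now assumed torsion, $\sigma$ has finite order. The remaining quotients $G_{\langle n\rangle}/G_{\langle n+1\rangle}$ for $n\in\N_+$ are also torsion, so Lemma~\ref{finiteorderextension} applies (with $S=T_G$) and propagates finite order to every extension. Thus every element of $T_G$ has finite order; in particular, every element of any group tree $S\subseteq T_G$ has finite order.

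Next, assume toward contradiction that $S\subseteq T_G$ is a group tree with $\het(S)>\omega\cdot 2$, and pick $\sigma\in S$ with $r(\sigma)=\omega\cdot 2$. Since $\sigma$ has finite order and rank $\geq\omega\cdot 2$, I would repeat the Case~1 argument from the proof of Theorem~\ref{rankbound} verbatim: Lemma~\ref{lemma81} yields a prime $p$ and $\tau\in S$ with $r(\tau)\geq\omega\cdot 2$ and order a power of $p$, and iterating Lemma~\ref{lemma82} produces, for every $n\in\omega$, an element of $S$ of length exceeding $n$, rank exactly $\omega$, and order a power of $p$. By Lemma~\ref{notpcompact}, $G_{\langle m\rangle}/G_{\langle m+1\rangle}$ fails to be $p$-compact for infinitely many $m$, contradicting tameness of $G$ via Theorem~\ref{main1technical}. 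Hence $\het(S)\leq\omega\cdot 2$ for every group tree $S\subseteq T_G$.

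The coset tree bound is then immediate from Lemma~\ref{lem:c2g}: for any coset tree $S\subseteq T_G$, the associated $\Gamma(S)\subseteq T_G$ is a group tree, so $\het(\Gamma(S))\leq\omega\cdot 2$, whence $\het(S)<\omega\cdot 2+\omega=\omega\cdot 3$. I do not anticipate a real obstacle here; the corollary is essentially a one-step refinement of Theorem~\ref{rankbound}, and the only genuinely new input is the observation that extending the torsion condition down to $n=0$ forces $T_G$ to consist entirely of elements of finite order, which makes Case~2 of the earlier proof vacuous.
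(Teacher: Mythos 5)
Your proof is correct and matches the paper's (very terse) proof exactly: the paper also observes that the strengthened torsion hypothesis forces every element of $S$ to have finite order, making Case~2 vacuous, and then invokes Case~1 of the proof of Theorem~\ref{rankbound} starting from an element of rank $\omega\cdot 2$. Your write-up just fills in the details the paper leaves implicit.
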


\begin{proof}
Our assumption implies that every element of $S$ has finite order. Case 1 of the proof of Theorem~\ref{rankbound} can be applied to get this corollary.
\end{proof}

For many examples of tame groups, the rank of their group trees and coset trees can be even lower.

\begin{theorem}\label{allpcompact} Let $(H_n)$ be a sequence of countable discrete abelian groups and $G\leq \prod_n H_n$ be a closed subgroup. Suppose for all prime $p$ and for all $n\in\omega$, $G_{\langle n\rangle}/G_{\langle n+1\rangle}$ is $p$-compact. Then for any group tree $S\subseteq T_G$, $\het(S)\leq\omega$; for any coset tree $S\subseteq T_G$, $\het(S)<\omega\cdot 2$.
\end{theorem}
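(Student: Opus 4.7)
The plan is to strengthen Case 1 of the proof of Theorem~\ref{rankbound}: the hypothesis that $G_{\langle n\rangle}/G_{\langle n+1\rangle}$ is $p$-compact for every $n$ and every prime $p$ (in particular, torsion by Proposition~\ref{soleckipcompact}(ii)) allows us to derive a contradiction from a single rank-$\omega$ witness, rather than needing infinitely many as in the tame case.

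First I will observe that every element of any group tree $S\subseteq T_G$ has finite order: for $\sigma\in S$ with $\lh(\sigma)\ge 1$, the restriction $\sigma\!\upharpoonright\! 1=\pi_0(\sigma)$ lies in $\pi_0[G_{\langle 0\rangle}]\cong G_{\langle 0\rangle}/G_{\langle 1\rangle}$, which is torsion by hypothesis, so $\sigma\!\upharpoonright\! 1$ has finite order; Lemma~\ref{finiteorderextension} then forces its extension $\sigma$ in $S$ to have finite order as well.

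Next, suppose toward contradiction that $\het(S)>\omega$ for some group tree $S\subseteq T_G$. Then there is $\sigma\in S$ with $r(\sigma)=\omega$. Since $\sigma$ has finite order, Lemma~\ref{lemma81} produces a prime $p$ and $\tau\in S$ whose order is a power of $p$ and satisfies $\omega\le r(\tau)<\omega_1$. By Lemma~\ref{finiteorderextension} every extension of $\tau$ in $S$ still has finite order, so Lemma~\ref{lemma82} applies: if $r(\tau)>\omega$ I choose $\tau'\supseteq\tau$ with $r(\tau')=\omega$ and order a power of $p$, and otherwise set $\tau'=\tau$. Writing $m=\lh(\tau')$, Lemma~\ref{notpcompact} (whose limit-ordinal hypothesis is satisfied since $\omega$ is a limit) concludes that $G_{\langle m\rangle}/G_{\langle m+1\rangle}$ is not $p$-compact, contradicting the hypothesis. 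This gives $\het(S)\le\omega$.

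Finally, for a coset tree $S\subseteq T_G$, the tree $\Gamma(S)$ is a group tree contained in $T_G$, so by the case just proved $\het(\Gamma(S))\le\omega$; Lemma~\ref{lem:c2g} then yields $\het(S)<\het(\Gamma(S))+\omega\le\omega\cdot 2$. There is essentially no serious obstacle here, since all of the heavy lifting was already done in Section~5; the one point requiring care is ensuring the witness supplied to Lemma~\ref{notpcompact} really has rank equal to the limit ordinal $\omega$ (not merely rank $\ge\omega$), which is exactly what Lemma~\ref{lemma82} delivers.
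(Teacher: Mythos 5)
Your proof is correct and follows essentially the same path as the paper: first observe every element of a group tree in $T_G$ has finite order (using the torsion hypothesis), then use Lemma~\ref{lemma81} and Lemma~\ref{lemma82} to locate an element of exact rank $\omega$ with $p$-power order, contradict via Lemma~\ref{notpcompact}, and pass from group trees to coset trees via $\Gamma(\cdot)$ and Lemma~\ref{lem:c2g}. The only difference is that you spell out explicitly the step of lowering the rank to exactly $\omega$ via Lemma~\ref{lemma82}, which the paper leaves implicit.
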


\begin{proof}
Toward a contradiction assume $S\subseteq T_G$ is a group tree with $\het(S)>\omega$. Our assumption implies that every element of $S$ has finite order. By Lemmas ~\ref{lemma81} and \ref{lemma82} there is a prime $p$ and $\sigma\in S$ such that  $r(\sigma)=\omega$ and the order of $\sigma$ is a power of $p$. This contradicts Lemma~\ref{notpcompact} since it implies that $G_{\langle n\rangle}/G_{\langle n+1\rangle}$ is not $p$-compact for $n=\lh(\sigma)$. The second part of the theorem follows from Lemma~\ref{lem:c2g}.
\end{proof}

Examples to which the above theorem applies include $\mathbb{Z}(p^\infty)^\omega$ for any prime $p$ and $\prod_{p\in\mathbb{P}}\mathbb{Z}(p^\infty)$. 

We are now ready to move on to classifying orbit equivalence relations of tame groups. For our general discussion we fix a sequence $(H_n)$ of countable discrete abelian groups and $G\leq \prod_n H_n$ a tame closed group. Let $\mathcal{T}$ be the space of all trees $S\subseteq T_G$. From a theorem of Becker and Kechris (c.f. \cite{GaoBook} Theorem 3.3.4) the $G$-orbit equivalence relation on $\mathcal{T}^\omega$ is a universal $G$-orbit equivalence relation. 

We first consider the action of $G$ on $\mathcal{T}$. Let $E_G$ denote this orbit equivalence relation.

For each $m\in\mathbb{N}_+$, $T_G\cap H^m$ is a countable discrete abelian group. We use $\mathcal{T}\cap H^m$ to denote the collection of all $S\cap H^m$ for $S\in \mathcal{T}$. Consider the action of $T_G\cap H^m$ on
$$ \mathcal{P}_m=(\mathcal{T}\cap H^1)\times \dots \times (\mathcal{T}\cap H^m) $$
defined as
$$ \sigma\cdot (S_1\cap H^1, \dots, S_m\cap H^m)=(\sigma\!\upharpoonright\!1+(S_1\cap H^1), \dots, \sigma\!\upharpoonright\!m+(S_m\cap H^m)) $$
where in fact $\sigma\!\upharpoonright\!m=\sigma$. By a theorem of Gao and Jackson \cite{GJ}, the orbit equivalence relation of any countable abelian group is hyperfinite. Thus we can fix a Borel map 
$$\eta_m: \mathcal{P}_m\to 2^\omega $$
such that for any $S_1, \dots, S_m, S'_1,\dots, S'_m\in \mathcal{T}$, $(S_1\cap H^1,\dots, S_m\cap H^m)$ and $(S'_1\cap H^1, \dots, S'_m\cap H^m)$ are orbit equivalent if and only if 
$$\eta_m(S_1\cap H^1,\dots, S_m\cap H^m)E_0\eta_m(S'_1\cap H^1,\dots, S'_m\cap H^m). $$
Define $\varphi_m:\mathcal{T}\cap H^m\to 2^\omega$ as 
$$ \varphi_m(S\cap H^m)=\eta_m(S\cap H^1,\dots, S\cap H^m) $$
for $S\in \mathcal{T}$. Let $\varphi: \mathcal{T}\to (2^\omega)^\omega$ be defined as
$$ \varphi(S)=(\varphi_1(S\cap H^1), \dots, \varphi_m(S\cap H^m), \dots). $$

For any $S, S'\in \mathcal{T}$, let 
$$ \Phi(S, S')=\{ \sigma\in T_G\,:\, \forall 0<m\leq \lh(\sigma)\ (\sigma\!\upharpoonright\!m)+(S\cap H^m)=S'\cap H^m\}. $$
Each $\Phi(S, S')$ is a coset tree. If $\mathcal{S}\subseteq \mathcal{T}$, we denote by $\Phi(\mathcal{S})$ the collection of all $\Phi(S, S')$ for $S, S'\in \mathcal{S}$.

\begin{lemma}\label{finiterank} Let $\mathcal{S}\subseteq \mathcal{T}$. Suppose for any coset tree $T\in \Phi(\mathcal{S})$, $\het(T)<\omega$. Then for any $S, S'\in \mathcal{S}$, $SE_GS'$ if and only if $\varphi(S)E_0^\omega \varphi(S')$.
\end{lemma}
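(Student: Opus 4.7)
My plan is to translate both sides of the claimed equivalence into statements about the coset tree $\Phi(S,S')$, so that the finite-height hypothesis on $\Phi(\mathcal{S})$ can be brought directly to bear. First I would observe that $S\,E_G\,S'$ holds precisely when $\Phi(S,S')$ is illfounded, since a branch $x\in[\Phi(S,S')]$ automatically lies in $G=[T_G]$ and satisfies $x\cdot S=S'$. Next I would unpack $\varphi(S)\,E_0^\omega\,\varphi(S')$: because $\eta_m$ reduces the $(T_G\cap H^m)$-orbit equivalence on $\mathcal{P}_m$ to $E_0$, this relation says that for every $m$ the tuples $(S\cap H^1,\dots,S\cap H^m)$ and $(S'\cap H^1,\dots,S'\cap H^m)$ lie in the same orbit, i.e., some $\sigma_m\in T_G\cap H^m$ satisfies $(\sigma_m\!\upharpoonright\!k)+(S\cap H^k)=S'\cap H^k$ for all $k\le m$. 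By the definition of $\Phi$, this is precisely the assertion that $\Phi(S,S')\cap H^m\neq\emptyset$ for every $m\in\mathbb{N}_+$. Thus the lemma reduces to showing that, under the hypothesis, $\Phi(S,S')$ is illfounded if and only if all of its levels are nonempty.

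The forward direction is then immediate and needs no hypothesis: any $x\in G$ with $x\cdot S=S'$ places $x\!\upharpoonright\!m$ in $\Phi(S,S')\cap H^m$ for every $m$.

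For the converse, assume $\Phi(S,S')\cap H^m\ne\emptyset$ for all $m$ and let $n=\het(\Phi(S,S'))<\omega$, so that $D^n(\Phi(S,S'))=D^{n+1}(\Phi(S,S'))$. Two subcases arise. If $D^n(\Phi(S,S'))\ne\emptyset$, then every node in it admits a strict extension within it; since $D^n(\Phi(S,S'))$ is closed under initial segments, a greedy construction produces an infinite branch $x\in[D^n(\Phi(S,S'))]\subseteq G$, whence $S\,E_G\,S'$. If instead $D^n(\Phi(S,S'))=\emptyset$, then $\Phi(S,S')$ is wellfounded and every node $\tau$ has rank $r(\tau)<n$; a routine induction on $r(\tau)$ yields the standard bound $\lh(\rho)\le\lh(\tau)+r(\tau)$ for every $\rho\supseteq\tau$ in $\Phi(S,S')$. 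Applying this to any $\rho\in\Phi(S,S')\cap H^{n+1}$ (which exists by level-nonemptiness) with $\tau=\rho\!\upharpoonright\!1$ gives the absurdity $n+1\le 1+r(\tau)<1+n$. So this subcase cannot occur, and the converse is established.

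The main obstacle is this last exclusion: we must use the finite-height hypothesis on $\Phi(\mathcal{S})$ to rule out the scenario in which $\Phi(S,S')$ has nonempty intersections at every level yet is wellfounded. The rank-versus-length inequality for wellfounded trees, together with the coset-tree structure ensuring that all of its levels line up beneath some single node at level $1$, is what makes this argument work; everything else is a bookkeeping unpacking of the definitions of $\varphi$, $\eta_m$, and $\Phi$.
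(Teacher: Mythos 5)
Your argument is correct and follows essentially the same route as the paper: translate the right-hand side into the statement that $\Phi(S,S')$ has nonempty intersection with every level $H^m$, and then use the finite-height hypothesis to conclude that a wellfounded $\Phi(S,S')$ would have bounded length, contradicting level-nonemptiness. Your extra case split on whether $D^n(\Phi(S,S'))$ is empty, and the explicit rank-versus-length bound, simply fill in the step the paper states in one line ("there is $N\in\omega$ such that $\lh(\sigma)\le N$ for every $\sigma\in\Phi(S,S')$"), so the content is the same.
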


\begin{proof}
Let $S, S'\in\mathcal{S}$. First suppose $SE_GS'$, i.e., there is $x\in G$ such that $x+S=S'$. Then for each $m\in\mathbb{N}_+$, $(x\!\upharpoonright\!m)+(S\cap H^m)=S'\cap H^m$. For each $m\in\mathbb{N}_+$, by the definition of $\varphi_m$, we have $\varphi_m(S\cap H^m)E_0 \varphi_m(S'\cap H^m)$. Hence $\varphi(S)E_0^\omega \varphi(S')$. 

Conversely, suppose $\varphi(S)E_0^\omega\varphi(S')$. Then for each $m\in\mathbb{N}_+$, there exists $\sigma_m\in T_G\cap H^m$ such that for any $0<i\leq m$,  $\sigma_m\!\upharpoonright\!i+(S\cap H^i)=S'\cap H^i$. In other words, $\sigma_m\in \Phi(S, S')$. We claim that $\Phi(S, S')$ is illfounded. Granting the claim, since $\Phi(S, S')\subseteq T_G$, we get $x\in [T_G]=G$ with $x+S=S'$, hence $SE_G S'$. 

To show the claim, assume $\Phi(S, S')$ is wellfounded. By our assumption $\het(\Phi(S, S'))<\omega$, and thus there is $N\in\omega$ such that $\lh(\sigma)\leq N$ for every $\sigma\in \Phi(S, S')$. This contradicts the existence of $\sigma_m\in\Phi(S, S')$ for $m>N$.
\end{proof}

Next we consider the action of $G$ on $\mathcal{T}^\omega$. We denote this orbit equivalence relation by $\tilde{E}_G$. For each $m\in\mathbb{N}_+$ consider the diagonal action of $T_G\cap H^m$ on $(\mathcal{P}_m)^\omega$. This is still an action of a countable discrete abelian group, and so the orbit equivalence relation is hyperfinite. Let $\tilde{\eta}_m: (\mathcal{P}_m)^\omega\to 2^\omega$ be a Borel reduction of the orbit equivalence relation to $E_0$. Define $\tilde{\varphi}_m: (\mathcal{T}\cap H^m)^\omega\to 2^\omega$ and $\tilde{\varphi}: \mathcal{T}^\omega\to (2^\omega)^\omega=2^{\omega\times\omega}$ similarly as before:
$$ \tilde{\varphi}_m((S_n\cap H^m))=\tilde{\eta}_m((S_n\cap H^1, \dots, S_n\cap H^m)) $$
and
$$ \tilde{\varphi}((S_n))=(\tilde{\varphi}_m((S_n\cap H^m)_{n\in\omega})) _{m\in\mathbb{N}_+}.$$

For $(S_n), (S'_n)\in \mathcal{T}^\omega$, let
$$\Psi ((S_n), (S'_n))=\bigcap_n \Phi(S_n, S'_n). $$
Then $\Psi((S_n), (S'_n))$ is still a coset tree. If $\mathcal{S}\subseteq \mathcal{T}^\omega$, we let $\Psi(\mathcal{S})$ denote the collection of all $\Psi((S_n), (S'_n))$ for $(S_n), (S'_n)\in \mathcal{S}$.

\begin{lemma}\label{seqfiniterank} Let $\mathcal{S}\subseteq \mathcal{T}^\omega$. Suppose for any coset tree $T\in\Psi(\mathcal{S})$, $\het(T)<\omega$. Then for any $(S_n), (S'_n)\in \mathcal{S}$, $(S_n)\tilde{E}_G(S'_n)$ if and only if $\tilde{\varphi}((S_n))E_0^\omega\tilde{\varphi}((S'_n))$.
\end{lemma}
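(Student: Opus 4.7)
The proof will closely parallel that of Lemma~\ref{finiterank}, with the sequences of trees and the diagonal action replacing the single-tree situation. The key new ingredient is that the reductions $\tilde{\eta}_m$ were defined to reduce the \emph{diagonal} action of $T_G\cap H^m$ on $(\mathcal{P}_m)^\omega$ to $E_0$, so a single group element $\sigma\in T_G\cap H^m$ must witness the $E_0$-equivalence across all coordinates simultaneously. That is exactly what produces a finite approximation inside $\Psi((S_n),(S'_n))$ rather than inside the $\Phi(S_n,S'_n)$ individually.

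For the forward direction, I would start with $x\in G$ satisfying $x+S_n=S'_n$ for every $n\in\omega$. Setting $\sigma=x\!\upharpoonright\!m$ for each $m\in\N_+$, I get $\sigma\!\upharpoonright\!i+(S_n\cap H^i)=S'_n\cap H^i$ for all $0<i\leq m$ and all $n$, which says that the sequences $(S_n\cap H^1,\dots,S_n\cap H^m)_{n\in\omega}$ and $(S'_n\cap H^1,\dots,S'_n\cap H^m)_{n\in\omega}$ lie in the same diagonal $(T_G\cap H^m)$-orbit inside $(\mathcal{P}_m)^\omega$. By the defining property of $\tilde{\eta}_m$ this gives $\tilde{\varphi}_m((S_n\cap H^m)_n)\,E_0\,\tilde{\varphi}_m((S'_n\cap H^m)_n)$ for every $m$, hence $\tilde{\varphi}((S_n))\,E_0^\omega\,\tilde{\varphi}((S'_n))$.

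For the converse I would unpack $E_0^\omega$-equivalence of $\tilde{\varphi}((S_n))$ and $\tilde{\varphi}((S'_n))$ to produce, for each $m\in\N_+$, some $\sigma_m\in T_G\cap H^m$ such that $\sigma_m\!\upharpoonright\!i+(S_n\cap H^i)=S'_n\cap H^i$ holds simultaneously for all $n\in\omega$ and all $0<i\leq m$. By definition this means $\sigma_m\in\Phi(S_n,S'_n)$ for every $n$, i.e., $\sigma_m\in\Psi((S_n),(S'_n))$. Since $\lh(\sigma_m)=m$ can be made arbitrarily large, the hypothesis that every $T\in\Psi(\mathcal{S})$ has $\het(T)<\omega$ (and the fact that wellfoundedness of a coset tree $T$ together with $\het(T)<\omega$ would impose a uniform bound on the lengths of its elements) forces $\Psi((S_n),(S'_n))$ to be illfounded. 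Any branch $x\in[\Psi((S_n),(S'_n))]\subseteq[T_G]=G$ then satisfies $x+S_n=S'_n$ for every $n$, giving $(S_n)\tilde{E}_G(S'_n)$.

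I do not anticipate a substantive obstacle: the proof is a routine ``diagonalization'' of the single-tree argument, and the only delicate point is to recognize that $\tilde{\eta}_m$ reduces the diagonal action (so that one $\sigma_m$ works uniformly over $n$), which is exactly how $\tilde{\eta}_m$ was constructed. The finite-height hypothesis on $\Psi(\mathcal{S})$ plays precisely the same role as the corresponding hypothesis on $\Phi(\mathcal{S})$ in Lemma~\ref{finiterank}, namely to turn ``arbitrarily long finite approximations'' into ``an infinite branch.''
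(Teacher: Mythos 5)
Your proof is correct and follows exactly the route the paper takes: the paper's own proof simply states that the argument is identical to that of Lemma~\ref{finiterank}, and you have spelled out that adaptation, correctly noting that $\tilde{\eta}_m$ reduces the diagonal action so that a single $\sigma_m$ works uniformly across all coordinates, and then running the same finite-height/illfoundedness argument on $\Psi((S_n),(S'_n))$.
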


\begin{proof} The proof is identical to the proof of Lemma~\ref{finiterank}. Note that the $(\Rightarrow)$ direction does not require the assumption on the height of coset trees.
\end{proof}

We need to use the equivalence relation $(E_0^\omega)^+$ when the coset trees may have infinite rank. For any tree $S\in \mathcal{T}$ and $\sigma\in S$, define
$$ S_\sigma=\{\tau\in S\,:\, \mbox{either $\tau\subseteq \sigma$ or $\sigma\subseteq \tau$}\}. $$
For each $m\in\mathbb{N}_+$, we fix once and for all a linear ordering $<_m$ of all elements of $T_G\cap H^m$ so that the order type of $<_m$ is $\omega$, and for convenience use the convention that $0\in H^m$ is the $<_m$-least. The linear ordering $<_m$ gives rise to an enumeration function $\gamma_m: \omega\to T_G\cap H^m$ so that $\gamma_m(0)=0\in H^m$ and for all $i, j\in\omega$, $i<j$ if and only if $\gamma_m(i)<_m\gamma_m(j)$.

For each $m\in\mathbb{N}_+$, we define a map $\hat{\psi}_m: \mathcal{T}\to (2^{\omega\times\omega})^\omega$ as follows. Given $S\in \mathcal{T}$ and $i\in\omega$, define 
$$\hat{\psi}_m(S)(i)=\tilde{\varphi}((S_{\sigma_n})) $$
where $\sigma_0, \sigma_1, \dots, \sigma_n \dots$ is an enumeration of all elements of $S\cap H^m$ such that 
$$\gamma_m(i)+\sigma_0<_m\gamma_m(i)+\sigma_1<_m\dots<_m\gamma_m(i)+\sigma_n<_m\dots . $$

It is perhaps worthwhile to explain the the definition in slightly plainer language. When $m=1$ and $i=0$, $\hat{\psi}_1(S)(0)$ is the result of the following procedure. First enumerate all elements of $S\cap H^1$ (elements of first level in $S$) in the $<_1$-increasing order, getting $\sigma_0, \sigma_1, \dots, \sigma_n, \dots$. Next split the tree $S$ into infinitely many (disjoint) subtrees $S_{\sigma_0}, S_{\sigma_1}, \dots, S_{\sigma_n}, \dots$. Finally code the orbit equivalence of the sequence of trees using the coding map $\tilde{\varphi}$ defined above. This yields the invariant $\hat{\psi}_1(S)(0)$. In general, follow a similar three-step procedure for given $m$ and $i$. First enumerate all elements of $S\cap H^m$ (elements on the $m$-th level in $S$) according to the $<_m$-increasing order of $\gamma_m(i)+(S\cap H^m)$, getting $\sigma_0, \sigma_1, \dots, \sigma_n, \dots$. Next split the tree $S$ into infinitely many subtrees $S_{\sigma_0}, S_{\sigma_1}, \dots, S_{\sigma_n}, \dots$. Finally code the orbit equivalence of the sequence of trees using the coding map $\tilde{\varphi}$ defined above to yield the invariant $\hat{\psi}_m(S)(i)$. 

\begin{lemma}\label{rankbelowomega2} Let $\mathcal{S}\subseteq \mathcal{T}$. Suppose for any coset tree $T\in \Phi(\mathcal{S})$, $\het(T)<\omega\cdot 2$. Then the following are equivalent for any $S, S'\in\mathcal{S}$:
\begin{enumerate}
\item[\rm (i)] $SE_G S'$.
\item[\rm (ii)] For all $m\in\mathbb{N}_+$ there exist $i, i'\in\omega$ such that $\hat{\psi}_m(S)(i)E_0^\omega \hat{\psi}_m(S')(i')$.
\item[\rm (iii)] For all $m\in\mathbb{N}_+$, both
\begin{itemize}
\item[\rm (a)] for any $i\in\omega$ there is $i'\in\omega$ such that $\hat{\psi}_m(S)(i)E_0^\omega\hat{\psi}_m(S')(i')$ and
\item[\rm (b)] for any $i'\in\omega$ there is $i\in\omega$ such that  $\hat{\psi}_m(S)(i)E_0^\omega\hat{\psi}_m(S')(i')$.
\end{itemize}
\end{enumerate}
\end{lemma}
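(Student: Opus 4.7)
The plan is to prove the cycle (i) $\Rightarrow$ (iii) $\Rightarrow$ (ii) $\Rightarrow$ (i). The forward implications amount to direct computations on the enumerations defining $\hat\psi_m$; the nontrivial step is the contrapositive of (ii) $\Rightarrow$ (i), where the height bound from the hypothesis is fed against an element of $\Phi(S,S')$ extracted from the $E_0^\omega$-matching.

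For (i) $\Rightarrow$ (iii), fix $x\in G$ with $x+S=S'$. Given $m$ and any $i\in\omega$, let $i'\in\omega$ be the unique index with $\gamma_m(i')=\gamma_m(i)-(x\!\upharpoonright\! m)$; this is well-defined since $T_G\cap H^m$ is a group and $\gamma_m$ enumerates it bijectively. The translation $\sigma\mapsto(x\!\upharpoonright\! m)+\sigma$ is a bijection $S\cap H^m\to S'\cap H^m$ which, by the choice of $i'$, carries the ordering used for $\hat\psi_m(S)(i)$ onto the one used for $\hat\psi_m(S')(i')$. The resulting enumerations satisfy $\sigma'_n=(x\!\upharpoonright\! m)+\sigma_n$ for every $n$, and $x$ sends the subtree $S_{\sigma_n}$ onto $S'_{\sigma'_n}$, witnessing $(S_{\sigma_n})_n\,\tilde E_G\,(S'_{\sigma'_n})_n$. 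The $(\Rightarrow)$ half of Lemma~\ref{seqfiniterank}, which does not use the height hypothesis, then yields $\hat\psi_m(S)(i)\,E_0^\omega\,\hat\psi_m(S')(i')$. Clause (b) is symmetric, and (iii) $\Rightarrow$ (ii) is immediate.

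For (ii) $\Rightarrow$ (i), suppose (ii) and, toward contradiction, that $\Phi(S,S')$ is wellfounded. By hypothesis $\het(\Phi(S,S'))\leq\omega+k_0$ for some $k_0<\omega$. Set $m:=k_0+2$ and apply (ii) to obtain $i,i'$ with $\hat\psi_m(S)(i)\,E_0^\omega\,\hat\psi_m(S')(i')$. Coordinate by coordinate, this says that for every $m'\in\mathbb N_+$ we have $\tilde\varphi_{m'}((S_{\sigma_n}\cap H^{m'})_n)\,E_0\,\tilde\varphi_{m'}((S'_{\sigma'_n}\cap H^{m'})_n)$; since $\tilde\varphi_{m'}$ reduces the diagonal action of $T_G\cap H^{m'}$ on $(\mathcal P_{m'})^\omega$ to $E_0$, there is some $\tau_{m'}\in T_G\cap H^{m'}$ with $(\tau_{m'}\!\upharpoonright\! m'')+(S_{\sigma_n}\cap H^{m''})=S'_{\sigma'_n}\cap H^{m''}$ for every $n$ and every $0<m''\leq m'$. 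Summing over $n$ and using that the unions $\bigcup_n S_{\sigma_n}\cap H^{m''}$ and $\bigcup_n S'_{\sigma'_n}\cap H^{m''}$ recover $S\cap H^{m''}$ and $S'\cap H^{m''}$ at the relevant levels places $\tau_{m'}$ in $\Phi(S,S')\cap H^{m'}$.

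The restriction $\tau_{m'}\!\upharpoonright\! m$ is forced by $(\tau_{m'}\!\upharpoonright\! m)+\sigma_n=\sigma'_n$ to equal the single element $y:=\sigma'_0-\sigma_0$, independent of $m'\geq m$. Hence $y\in\Phi(S,S')\cap H^m$ admits extensions in $\Phi(S,S')$ at every level $m'\geq m$, so $y\in D^n(\Phi(S,S'))$ for every $n<\omega$ and $r(y)\geq\omega$. Since $\sigma\subsetneq\tau\in D^\alpha(T)$ implies $\sigma\in D^{\alpha+1}(T)$, iterating $m-1$ times yields
$$r(y\!\upharpoonright\! 1)\;\geq\; r(y)+(m-1)\;\geq\;\omega+k_0+1\;>\;\het(\Phi(S,S')),$$
contradicting the fact that every element of a wellfounded tree has rank strictly below its height. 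The main obstacle is the passage from $E_0^\omega$-matching of the $\hat\psi_m$-invariants to an honest element $\tau_{m'}\in\Phi(S,S')$, which requires carefully assembling the diagonal-action identities across all subtrees $S_{\sigma_n}$ into the global translation condition defining $\Phi(S,S')$; once this is in hand, the rank-increment argument closes the proof.
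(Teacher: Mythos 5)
Your proof is correct and follows essentially the same route as the paper's. The forward directions (i)$\Rightarrow$(iii)$\Rightarrow$(ii) match the paper's argument verbatim in substance: translation by $x\!\upharpoonright\! m$ carries the $\gamma_m(i)$-enumeration of $S\cap H^m$ to the $\gamma_m(i')$-enumeration of $S'\cap H^m$ when $\gamma_m(i')=\gamma_m(i)-(x\!\upharpoonright\! m)$, and the $(\Rightarrow)$ half of Lemma~\ref{seqfiniterank} finishes. For (ii)$\Rightarrow$(i), you extract from the $E_0$-matching at each level $m'$ an element $\tau_{m'}\in\Phi(S,S')\cap H^{m'}$, observe that all the $\tau_{m'}$ with $m'\geq m$ restrict to the same $y\in\Phi(S,S')\cap H^m$ (forced by $\tau_{m'}\!\upharpoonright\! m+\sigma_0=\sigma'_0$), deduce $r(y)\geq\omega$, and push the rank up along $y\!\upharpoonright\!(m-1)\subsetneq\cdots\subsetneq y\!\upharpoonright\! 1$ to overflow $\het(\Phi(S,S'))\leq\omega+k_0$. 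The paper packages the same computation through the auxiliary subtree $T=\Psi((S_{\sigma_n}),(S'_{\sigma'_n}))=\Phi(S,S')_\tau$, bounding $\het(T)<\omega$ while showing $T$ meets every level $m'>m$; your version bypasses $T$ and argues directly on $\Phi(S,S')$. The two formulations are interchangeable, and your choice of $m=k_0+2$ (versus the paper's minimal $m$ with $\het(\Phi(S,S'))\leq\omega+m-1$) is harmless. Both your proof and the paper's rely tacitly on $\bigcup_n(S_{\sigma_n}\cap H^{m''})=S\cap H^{m''}$ for $m''\leq m$, which holds for the pruned trees arising in the Becker--Kechris representation; this is a shared convention, not a gap you introduced.
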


\begin{proof} We show (i)$\Rightarrow$(iii)$\Rightarrow$(ii)$\Rightarrow$(i). For (i)$\Rightarrow$(iii) suppose $SE_GS'$ and so there is $x\in G$ such that for all $m\in\mathbb{N}_+$, $(x\!\upharpoonright\! m)+(S\cap H^m)=S'\cap H^m$. Fix any $m\in\mathbb{N}_+$. By symmetry, it suffices to verify only (a). For this suppose $i\in\omega$ is given. Define $i'\in\omega$ to be such that
$$ \gamma_m(i')=\gamma_m(i)-(x\!\upharpoonright\! m). $$ 
Then 
$$ \gamma_m(i')+(S'\cap H^m)=\gamma_m(i)+(S\cap H^m). $$
Thus the first steps of the constructions of $\hat{\psi}_m(S)(i)$ and $\hat{\psi}_m(S')(i')$ end up with an enumeration $\sigma_0, \sigma_1, \dots, \sigma_n, \dots$ of elements of $S\cap H^m$ and an enumeration $\sigma'_0,\sigma'_1,\dots, \sigma'_n,\dots$ of elements of $S'\cap H^m$ with $(x\!\upharpoonright\!m)+\sigma_n=\sigma'_n$ for all $n\in\omega$. It follows that $x\in [\Psi((S_{\sigma_n}), (S'_{\sigma'_n}))]$. Thus by the $(\Rightarrow)$ direction of Lemma~\ref{seqfiniterank}, we have $\tilde{\varphi}((S_{\sigma_n}))E_0^\omega \tilde{\varphi}((S'_{\sigma'_n}))$ as required.

(iii)$\Rightarrow$(ii) is obvious.

For (ii)$\Rightarrow$(i), suppose $S, S'\in \mathcal{S}$ and $\het(\Phi(S, S'))\leq \omega+m-1$ for some $m\in\mathbb{N}_+$. Suppose $i, i'\in\omega$ are given such that $\hat{\psi}_m(S)(i)E_0^\omega\hat{\psi}_m(S')(i')$. Suppose $\sigma_0, \dots, \sigma_n,\dots$ enumerate $S\cap H^m$ according to the $<_m$-order of $\gamma_m(i)+(S\cap H^m)$ and $\sigma'_0, \dots, \sigma'_n, \dots$ enumerate $S'\cap H^m$ according to the $<_m$-order of  $\gamma_m(i')+(S'\cap H^m)$. We have $\tilde{\varphi}((S_{\sigma_n}))E_0^\omega \tilde{\varphi}((S'_{\sigma'_n}))$. In particular, $$\tilde{\varphi}_m((S_{\sigma_n}\cap H^m))E_0 \tilde{\varphi}_m((S'_{\sigma'_n}\cap H^m)).$$
However, for any $0<j\leq m$, $S_{\sigma_n}\cap H^j=\{\sigma_n\!\upharpoonright\!j\}$ and $S'_{\sigma'_n}\cap H^j=\{\sigma'_n\!\upharpoonright\!j\}. $ Thus there is a unique $\tau\in T_G\cap H^m$ such that for all $n\in\omega$, $\tau+\sigma_n=\sigma'_n$ and for any $0<j\leq m$, $(\tau\!\upharpoonright\! j)+(S_{\sigma_n}\cap H^j)=S'_{\sigma'_n}\cap H^j$. Let $T=\Psi((S_{\sigma_n}), (S'_{\sigma'_n}))$. We have just shown that for all $0<j\leq m$, $T\cap H^j=\{\tau\!\upharpoonright\!j\}$. 

We claim that $T=\Phi(S, S')_\tau$. For this we only need to verify that for all $l>m$, $T\cap H^l=\Phi(S, S')_\tau\cap H^l$. First let $\sigma\in T$ with $\lh(\sigma)=l>m$. Then $\sigma\supseteq\tau$. Since for all $j\in\omega$, $S\cap H^j=\bigcup_n (S_{\sigma_n}\cap H^j)$ and $S'\cap H^j=\bigcup_n (S'_{\sigma'_n}\cap H^j)$, we have that for all $j\leq l$, $(\sigma\!\upharpoonright\!j)+(S\cap H^j)=S'\cap H^j$. This means $\sigma\in \Phi(S, S')_\tau$. Conversely, if $\sigma\supseteq \tau$ and  $\sigma\in\Phi(S,S')$, then since $\tau+\sigma_n=\sigma'_n$ for each $n\in\omega$, we must have for each $n\in\omega$ and $j\leq l$, $(\sigma\!\upharpoonright\!j)+(S_{\sigma_n}\cap H^j)=S'_{\sigma'_n}\cap H^j$. This means that $\sigma\in T$.

From $\tilde{\varphi}((S_{\sigma_n}))E_0^\omega \tilde{\varphi}((S'_{\sigma'_n}))$, we also get that for any $m'>m$, $T\cap H^{m'}\neq\emptyset$.

Now if $T$ is illfounded then so is $\Phi(S, S')$ since $T\subseteq \Phi(S, S')$, and it follows that $SE_G S'$, and we are done. Suppose $T$ is wellfounded. Since $\lh(\tau)=m$ and $\het(\Phi(S, S'))<\omega+m$, we conclude that $r(\tau)<\omega$ and it follows that $\het(T)=\het(\Phi(S, S')_\tau)<\omega$. Thus there is $N\in\omega$ such that $\lh(\sigma)\leq N$ for every $\sigma\in T$. This contradicts the observation that $T\cap H^{m'}\neq\emptyset$ for $m'>N$.
\end{proof}

We need to see that the equivalence relation used to reduce the orbit equivalence relation in Lemma~\ref{rankbelowomega2} (iii) is essentially $(E_0^\omega)^+$. Let $F$ denote this equivalence relation on $X=(2^{\omega\times\omega})^{\omega\times \omega}$. Then for $x, y\in X$, 
$$ xFy\iff \forall m\ [\forall i\exists j\ x(m, i)E_0^\omega y(m, j) \mbox{ and } \forall j\exists i\ x(m, i)E_0^\omega y(m, j)]. $$
Thus $F=((E_0^\omega)^+)^\omega$. It suffices to show that $F\leq_B (E_0^\omega)^+$. For this we show a general statement. 

For equivalence relations $R_1$ on $X_1$ and $R_2$ on $X_2$, let $R_1\times R_2$ be the equivalence relation on $X_1\times X_2$ defined as
$$ (x_1, x_2)(R_1\times R_2)(y_1, y_2) \iff x_1R_1 y_1 \mbox{ and } x_2R_2y_2. $$
Denote by $\mbox{id}(\omega)$ the identity relation on $\omega$.

\begin{lemma}\label{plusomega} Let $E$ be an equivalence relation on a Polish space $X$. If $\mbox{\rm id}(\omega)\times E\leq_B E$, then $(E^+)^\omega\leq_B E^+$.
\end{lemma}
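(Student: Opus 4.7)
The plan is to extract from the hypothesis a Borel ``tagging'' map $\phi\colon \omega\times X\to X$ satisfying $\phi(k,x)\,E\,\phi(k',x')$ if and only if $k=k'$ and $x\,E\,x'$, and then to amalgamate a sequence of $X^\omega$-points into a single $X^\omega$-point by enumerating all tagged entries. Precisely, fix a bijection $\langle\cdot,\cdot\rangle\colon\omega\times\omega\to\omega$ and, for $(x^{(k)})_{k\in\omega}\in(X^\omega)^\omega$ with $x^{(k)}=(x^{(k)}_n)_{n\in\omega}$, define
\[ \Theta\bigl((x^{(k)})_k\bigr)_{\langle k,n\rangle}=\phi\bigl(k,x^{(k)}_n\bigr). \]
This map $\Theta\colon(X^\omega)^\omega\to X^\omega$ is visibly Borel; the claim will be that it reduces $(E^+)^\omega$ to $E^+$.

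The forward direction is essentially immediate. If $(x^{(k)})\,(E^+)^\omega\,(y^{(k)})$, then for each $(k,n)$ the hypothesis $x^{(k)}\,E^+\,y^{(k)}$ produces some $n'$ with $x^{(k)}_n\,E\,y^{(k)}_{n'}$, and tagging preserves $E$ in the second coordinate, so $\phi(k,x^{(k)}_n)\,E\,\phi(k,y^{(k)}_{n'})$. The two $\forall\exists$ clauses in the definition of $E^+$ for the image sequences follow symmetrically.

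The reverse direction is where the tagging does its work. Suppose $\Theta((x^{(k)}))\,E^+\,\Theta((y^{(k)}))$ and fix $k$ and $n$. By definition of $E^+$ there exist $k',n'$ with $\phi(k,x^{(k)}_n)\,E\,\phi(k',y^{(k')}_{n'})$, and the defining property of $\phi$ forces both $k'=k$ and $x^{(k)}_n\,E\,y^{(k)}_{n'}$. The symmetric quantifier alternation gives the matching condition on the $y$-side, so $x^{(k)}\,E^+\,y^{(k)}$ for every $k$, i.e., $(x^{(k)})\,(E^+)^\omega\,(y^{(k)})$. There is no serious obstacle here: the content of the lemma is that the assumption $\mathrm{id}(\omega)\times E\leq_B E$ is exactly the amount of room needed to pack a countable sequence of $E^+$-classes into a single one, and the only point to check is that $k$ can be recovered from the $E$-class of $\phi(k,x)$, which is built into the definition of a Borel reduction.
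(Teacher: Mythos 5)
Your proof is correct and is essentially the paper's argument, only unpacked: the paper first reduces $(E^+)^\omega$ to $(\mathrm{id}(\omega)\times E)^+$ via the map $f((x_{m,i}))(\langle n,j\rangle)=(n,x_{n,j})$ and then cites the general monotonicity of $R\mapsto R^+$ under $\leq_B$ to pass to $E^+$, whereas you compose these two reductions into a single map $\Theta$ and verify the equivalence directly. The content and the key use of the tagging property of $\phi$ (that the index $k$ is recoverable from the $E$-class of $\phi(k,x)$) are the same.
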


\begin{proof} It is easy to see that if $R_1\leq_B R_2$ then $R_1^+\leq_B R_2^+$ (c.f. \cite{GaoBook} Exercise 8.3.2). Thus it suffices to show $(E^+)^\omega\leq_B (\mbox{id}(\omega)\times E)^+$. For this we need to define a Borel reduction $f$ from $X^{\omega\times\omega}$ to $(\omega\times X)^\omega$. Fix a bijection $\langle \cdot,\cdot\rangle:\omega\times\omega\to\omega$. Given $(x_{m,i})\in X^{\omega\times\omega}$ and $n, j\in\omega$, let
$$ f((x_{m,i}))(\langle n, j\rangle)=(n, x_{n, j}).$$
It is straightforward to check that $f$ is a reduction from $(E^+)^\omega$ to $(\mbox{id}(\omega)\times E)^+$.
\end{proof}

To consider the action of $G$ on $\mathcal{T}^\omega$ we use the same coding technique before Lemma~\ref{seqfiniterank}. 

Fix a bijection $\langle \cdot,\cdot\rangle:\omega\times\omega\to\omega$. Let $(\cdot)_0$ and $(\cdot)_1$ be the decoding functions. That is, for any $n\in\omega$, $\langle (n)_0,(n)_1\rangle=n$. 

For each $m\in\mathbb{N}_+$, define a map $\tilde{\psi}_m: \mathcal{T}^\omega\to (2^{\omega\times\omega})^\omega$ as follows. Let a sequence $(S_n)\in\mathcal{T}^\omega$ and $i\in\omega$ be given. For each $n\in\omega$, enumerate the elements of $S_n\cap H^m$ as
$$ \sigma_{n,0}, \sigma_{n,1}, \dots, \sigma_{n,k},\dots $$
so that
$$ \gamma_m(i)+\sigma_{n,0}<_m \gamma_m(i)+\sigma_{n,1}<_m\dots<_m \gamma_m(i)+\sigma_{n,k}<_m\dots.$$
Define a new sequence $(T_n)$ of trees as
$$ T_n=(S_{(n)_0})_{\sigma_{(n)_0, (n)_1}}. $$
Finally, define
$$ \tilde{\psi}_m((S_n)_{n\in\omega})(i)=\tilde{\varphi}((T_n)_{n\in\omega}). $$

\begin{lemma}\label{seqrankbelowomega2} Let $\mathcal{S}\subseteq \mathcal{T}^\omega$. Suppose for any coset tree $T\in \Psi(\mathcal{S})$, $\het(T)<\omega\cdot 2$. Then the following are equivalent for any $(S_n), (S'_n)\in\mathcal{S}$:
\begin{enumerate}
\item[\rm (i)] $(S_n)\tilde{E}_G (S'_n)$.
\item[\rm (ii)] For all $m\in\mathbb{N}_+$ there exist $i, i'\in\omega$ such that $\tilde{\psi}_m((S_n))(i)E_0^\omega \tilde{\psi}_m((S'_n))(i')$.
\item[\rm (iii)] For all $m\in\mathbb{N}_+$, both
\begin{itemize}
\item[\rm (a)] for any $i\in\omega$ there is $i'\in\omega$ such that $\tilde{\psi}_m((S_n))(i)E_0^\omega\tilde{\psi}_m((S'_n))(i')$ and
\item[\rm (b)] for any $i'\in\omega$ there is $i\in\omega$ such that  $\tilde{\psi}_m((S_n))(i)E_0^\omega\tilde{\psi}_m((S'_n))(i')$.
\end{itemize}
\end{enumerate}
\end{lemma}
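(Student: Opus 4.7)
The plan is to mimic, almost line for line, the proof of Lemma~\ref{rankbelowomega2}, replacing $\hat{\psi}_m$ by $\tilde{\psi}_m$, $\Phi(S,S')$ by $\Psi((S_n),(S'_n))$, and every invocation of Lemma~\ref{finiterank} by the corresponding invocation of Lemma~\ref{seqfiniterank}. I will show (i)$\Rightarrow$(iii)$\Rightarrow$(ii)$\Rightarrow$(i), with (iii)$\Rightarrow$(ii) being immediate.

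For (i)$\Rightarrow$(iii), fix $x\in G$ with $x+S_n=S'_n$ for every $n$, fix $m\in\mathbb{N}_+$ and $i\in\omega$, and define $i'\in\omega$ by $\gamma_m(i')=\gamma_m(i)-(x\!\upharpoonright\!m)$. Then $\gamma_m(i')+(S'_n\cap H^m)=\gamma_m(i)+(S_n\cap H^m)$, so the $<_m$-enumerations $(\sigma_{n,k})_{k}$ of $S_n\cap H^m$ and $(\sigma'_{n,k})_{k}$ of $S'_n\cap H^m$ that enter the definitions of $\tilde{\psi}_m((S_n))(i)$ and $\tilde{\psi}_m((S'_n))(i')$ satisfy $\sigma'_{n,k}=(x\!\upharpoonright\!m)+\sigma_{n,k}$ uniformly in $n,k$. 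Consequently $x+(S_{(n)_0})_{\sigma_{(n)_0,(n)_1}}=(S'_{(n)_0})_{\sigma'_{(n)_0,(n)_1}}$, so the same $x$ witnesses orbit equivalence of the derived sequences $(T_n)$ and $(T'_n)$; the easy $(\Rightarrow)$ direction of Lemma~\ref{seqfiniterank}, which uses no height hypothesis, now gives $\tilde{\varphi}((T_n))E_0^\omega\tilde{\varphi}((T'_n))$, i.e.\ $\tilde{\psi}_m((S_n))(i)E_0^\omega\tilde{\psi}_m((S'_n))(i')$.

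For (ii)$\Rightarrow$(i), by the hypothesis on $\Psi(\mathcal{S})$ fix $m\in\mathbb{N}_+$ with $\het(\Psi((S_n),(S'_n)))\leq\omega+m-1$, and take $i,i'$ as in (ii). Reading the $m$-th coordinate of $\tilde{\varphi}$ in the equivalence $\tilde{\varphi}((T_n))E_0^\omega\tilde{\varphi}((T'_n))$, together with the observation that for $0<j\leq m$ each $T_n\cap H^j$ and $T'_n\cap H^j$ is the singleton $\{\sigma_{(n)_0,(n)_1}\!\upharpoonright\!j\}$ respectively $\{\sigma'_{(n)_0,(n)_1}\!\upharpoonright\!j\}$, yields a single $\tau\in T_G\cap H^m$ such that $\tau+\sigma_{n,k}=\sigma'_{n,k}$ for all $n,k$ and $(\tau\!\upharpoonright\!j)+(S_n\cap H^j)=S'_n\cap H^j$ for all $n$ and $0<j\leq m$; in particular $\tau\in\Psi((S_n),(S'_n))$. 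Let $T=\Psi((S_n),(S'_n))_\tau$. By the same argument used in the proof of Lemma~\ref{rankbelowomega2}, $T$ coincides with $\Psi((T_n),(T'_n))$, and the $E_0^\omega$-equivalence of $\tilde{\varphi}((T_n))$ and $\tilde{\varphi}((T'_n))$ forces $T\cap H^{m'}\neq\emptyset$ for every $m'>m$ (at each such level $m'$ one extracts a common translation shifting all $T_n\cap H^{j}$ to $T'_n\cap H^j$ for $j\leq m'$, which at levels $j\leq m$ is forced to agree with $\tau\!\upharpoonright\!j$ by the singleton structure). If $T$ were wellfounded then $\lh(\tau)=m$ combined with $\het(\Psi((S_n),(S'_n)))\leq\omega+m-1$ would give $r(\tau)<\omega$ and hence $\het(T)<\omega$, contradicting the infinite support just established. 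Thus $T$ is illfounded, yielding $x\in G$ with $x+S_n=S'_n$ for every $n$.

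The only place genuine care is required is the verification that the common translation extracted at each level $m'>m$ necessarily restricts to $\tau\!\upharpoonright\!j$ for $j\leq m$; this is precisely where we use that the singleton description of $T_n\cap H^j$ and $T'_n\cap H^j$ pins the translation uniquely as soon as the indexing pair $(n,k)$ varies, together with the fact that $\tau$ was already extracted from the $m$-th component of $\tilde{\varphi}$. Once this is observed, assembling the above steps gives a proof essentially identical in structure to that of Lemma~\ref{rankbelowomega2}.
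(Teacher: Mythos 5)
Your proof is correct and follows exactly the route the paper itself takes: the paper's proof of this lemma simply says it is identical to the proof of Lemma~\ref{rankbelowomega2}, and you have spelled out that translation (replacing $\hat{\psi}_m$, $\Phi$, Lemma~\ref{finiterank} by $\tilde{\psi}_m$, $\Psi$, Lemma~\ref{seqfiniterank}) carefully and accurately, including the one place where genuine care is needed, namely verifying that the $m$-th coordinate of $\tilde{\varphi}$ pins down a unique translator $\tau$ that extends compatibly at every higher level.
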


\begin{proof}
The proof is identical to the proof of Lemma~\ref{rankbelowomega2}. The directions (i)$\Rightarrow$(iii)$\Rightarrow$(ii) do not require the boundedness assumption on the rank of coset trees. The boundedness assumption is only needed in the direction (ii)$\Rightarrow$(i).
\end{proof}

\begin{theorem}\label{oneplus} Let $G$ be a non-archimedean abelian Polish group and $(G_n)$ a decreasing sequence of open subgroups of $G$ which is a nbhd base of the identity element of $G$. Suppose $G_0=G$ and for all $n\in\omega$ and all prime $p$, $G_n/G_{n+1}$ is torsion and contains only finitely many elements of order $p$. Then any $G$-orbit equivalence relation is Borel reducible to $(E_0^\omega)^+$.
\end{theorem}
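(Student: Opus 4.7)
The plan is to combine the height bound from Theorem~\ref{allpcompact} with the Borel-reduction machinery of Lemma~\ref{seqrankbelowomega2}, and then collapse the resulting countable power of $(E_0^\omega)^+$ via Lemma~\ref{plusomega}.

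First, I would note that the hypothesis is precisely the assertion that $G_n/G_{n+1}$ is $p$-compact for every $n$ and every prime $p$: indeed, ``torsion with finitely many elements of order $p$'' is equivalent, by Lemma~\ref{compact} condition~(vi), to $p$-compactness. To align this with the $G_{\langle n\rangle}$ notation of Section~3, I would embed $G$ as a closed subgroup of $\prod_n H_n$ by the inverse-limit map $g\mapsto (gG_{n+1})_n$, taking $H_n=G/G_{n+1}$. Under this embedding the kernel of the $n$-th projection is exactly $G_{n+1}$, so $G_{\langle n\rangle}=\bigcap_{k<n} G_{k+1}=G_n$ and hence $G_{\langle n\rangle}/G_{\langle n+1\rangle}=G_n/G_{n+1}$ is $p$-compact for every $n$ and every prime $p$. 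Theorem~\ref{allpcompact} then applies directly and gives $\het(S)<\omega\cdot 2$ for every coset tree $S\subseteq T_G$.

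By the Becker--Kechris theorem it suffices to Borel-reduce the orbit equivalence relation $\tilde{E}_G$ on $\mathcal{T}^\omega$ to $(E_0^\omega)^+$. Since the height bound $\het(T)<\omega\cdot 2$ holds for every coset tree $T\in\Psi(\mathcal{T}^\omega)$, Lemma~\ref{seqrankbelowomega2} applies with $\mathcal{S}=\mathcal{T}^\omega$. The Borel map
\[
(S_n)_{n\in\omega}\ \longmapsto\ \bigl(\tilde{\psi}_m((S_n))(i)\bigr)_{(m,i)\in\mathbb{N}_+\times\omega}
\]
is, by condition~(iii) of that lemma, a Borel reduction of $\tilde{E}_G$ to the equivalence relation on $(2^{\omega\times\omega})^{\mathbb{N}_+\times\omega}$ which compares the $m$-th row under $(E_0^\omega)^+$ for each $m$; this relation is exactly $\bigl((E_0^\omega)^+\bigr)^\omega$.

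To finish I would apply Lemma~\ref{plusomega} with $E=E_0^\omega$. The hypothesis $\mathrm{id}(\omega)\times E_0^\omega\leq_B E_0^\omega$ is routine: coding $n\in\omega$ by the characteristic function of the multiples of $n+1$ in $2^\omega$ gives $\mathrm{id}(\omega)\leq_B E_0$, and then adjoining this code as an extra coordinate of an element of $(2^\omega)^\omega$ reduces $\mathrm{id}(\omega)\times E_0^\omega$ to $E_0^\omega$. Hence $\bigl((E_0^\omega)^+\bigr)^\omega\leq_B (E_0^\omega)^+$, and composing with the previous reduction yields $\tilde{E}_G\leq_B (E_0^\omega)^+$. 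The only mildly delicate point in the plan is choosing the embedding so that the prescribed nbhd base $(G_n)$ coincides with the canonical sequence $(G_{\langle n\rangle})$, which is what makes Theorem~\ref{allpcompact} directly applicable; everything else is a bookkeeping composition of results already established in this section.
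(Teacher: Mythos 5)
Your proof is correct and follows the paper's outline with one small but genuine simplification. The paper compresses the proof into a single sentence citing four ingredients: the proof of Lemma~\ref{anybase}, Theorem~\ref{allpcompact}, Lemma~\ref{seqrankbelowomega2}, and Lemma~\ref{plusomega}. You use the same chain of lemmas, but where the paper invokes the argument from the proof of Lemma~\ref{anybase} (together with Lemma~\ref{extansion}) to transfer $p$-compactness from the prescribed base $(G_n)$ to the canonical base $(G_{\langle n\rangle})$ of an arbitrary embedding $G\leq\prod_n H_n$, you instead choose the inverse-limit embedding $g\mapsto(gG_{n+1})_n\in\prod_n G/G_{n+1}$, which forces $G_{\langle n\rangle}=G_n$ exactly, so Theorem~\ref{allpcompact} applies with no transfer at all. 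This is a clean bookkeeping improvement: note in particular that the \emph{statement} of Lemma~\ref{anybase} (which only controls the ``infinitely many not $p$-compact'' predicate) would not suffice here --- one really does need its \emph{proof} to get $p$-compactness of every quotient $G_{\langle m\rangle}/G_{\langle m+1\rangle}$, by choosing $N$ with $G_N\leq G_{\langle m+1\rangle}$ and observing that $G_{\langle m\rangle}/G_{\langle m+1\rangle}$ is a quotient of a subgroup of the iterated extension $G/G_N$ --- and your embedding sidesteps that issue entirely. The remaining steps (Lemma~\ref{seqrankbelowomega2}(iii) giving a reduction of $\tilde{E}_G$ to $\bigl((E_0^\omega)^+\bigr)^\omega$, then Lemma~\ref{plusomega} collapsing to $(E_0^\omega)^+$ via the easy observation that $\mathrm{id}(\omega)\times E_0^\omega\leq_B E_0^\omega$) match the paper exactly.
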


\begin{proof}
This follows from the proof of Lemma~\ref{anybase}, Theorem~\ref{allpcompact}, Lemma~\ref{seqrankbelowomega2}, and Lemma~\ref{plusomega}.
\end{proof}

We will show in the next section that $(E_0^\omega)^+$ in the above theorem as an upper bound for the orbit equivalence relation in the Borel reducibility hierarchy is not sharp. In fact, for any non-archimedean abelian Polish group $G$, no $G$-orbit equivalence relation can be Borel bireducible with $(E_0^\omega)^+$. 

We have developed two useful techniques so far in the discussion of orbit equivalence relations, namely, how to code sequences of trees (Lemma~\ref{seqfiniterank} and Lemma~\ref{seqrankbelowomega2}) and how to extend the coding map past a limit rank of possible coset trees (Lemma~\ref{rankbelowomega2}). The first does not increase the complexity of the equivalence relation used to reduce the orbit equivalence relation, and the second requires a jump operation $E\mapsto E^+$. By repeatedly applying these techniques to coset trees of higher rank, we get generalized versions of Lemma~\ref{seqrankbelowomega2} for the cases in which the heights of coset trees are strictly below $\omega\cdot 3$ and $\omega\cdot 4$. These generalized versions are nothing but a notational morass, and involve no new ideas. We obtain the following results.

\begin{theorem} \label{twopluses}Let $G$ be a non-archimedean abelian Polish group and $(G_n)$ a decreasing sequence of open subgroups of $G$ which is a nbhd base of the identity element of $G$. Suppose $G_0=G$ and for all $n\in\omega$, $G_n/G_{n+1}$ is torsion. Then any $G$-orbit equivalence relation is Borel reducible to $(E_0^\omega)^{++}$.
\end{theorem}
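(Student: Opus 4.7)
The plan is to iterate the split-and-recode construction of Lemma~\ref{seqrankbelowomega2} one more time. First, by Proposition~\ref{prop:abel} together with the standard construction $H_n=G/G_{n+1}$, we may assume $G\leq\prod_n H_n$ is a closed subgroup with $G_{\langle n\rangle}=G_n$ for every $n\in\omega$. The torsion hypothesis then matches the hypothesis of Corollary~\ref{alltorsion}, which gives $\het(T)<\omega\cdot 3$ for every coset tree $T\subseteq T_G$. Consequently, for any $(S_n),(S'_n)\in\mathcal{T}^\omega$, the coset tree $\Psi((S_n),(S'_n))$ is either illfounded or has height strictly below $\omega\cdot 2+m$ for some $m\in\mathbb{N}_+$.

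For each $m\in\mathbb{N}_+$, we would define a Borel coding map $\tilde{\psi}_m^{(2)}\colon\mathcal{T}^\omega\to Y_m$ into an appropriate Polish space by mimicking $\tilde{\psi}_m$ from Lemma~\ref{seqrankbelowomega2} but replacing the inner invariant $\tilde{\varphi}$ (which handles coset trees of height $<\omega$) by the full coding machinery of Lemma~\ref{seqrankbelowomega2} itself (which handles coset trees of height $<\omega\cdot 2$). Concretely, given $(S_n)\in\mathcal{T}^\omega$ and $i\in\omega$, enumerate for each $n$ the elements of $S_n\cap H^m$ in the $<_m$-ordering shifted by $\gamma_m(i)$ as $\sigma_{n,0},\sigma_{n,1},\dots$, form the reindexed sequence of subtrees $T_k=(S_{(k)_0})_{\sigma_{(k)_0,(k)_1}}$, and for every $\ell\in\mathbb{N}_+$ record $\tilde{\psi}_\ell((T_k))$ as a component of $\tilde{\psi}_m^{(2)}((S_n))(i)$.

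The central claim to verify is the analogue of (i)$\Leftrightarrow$(iii) in Lemma~\ref{seqrankbelowomega2}: for $(S_n),(S'_n)\in\mathcal{T}^\omega$, $(S_n)\tilde{E}_G(S'_n)$ iff for every $m\in\mathbb{N}_+$ and every $i\in\omega$ there is $i'\in\omega$ with $\tilde{\psi}_m^{(2)}((S_n))(i)$ and $\tilde{\psi}_m^{(2)}((S'_n))(i')$ related in the $(E_0^\omega)^+$-sense produced by Lemma~\ref{seqrankbelowomega2}(iii) applied to the inner component, and symmetrically. The forward direction is immediate: from $x\in G$ with $x+S_n=S'_n$, setting $\gamma_m(i')=\gamma_m(i)-(x\!\upharpoonright\!m)$ aligns the enumerations so that the inner invariants match. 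The backward direction mirrors the proof of Lemma~\ref{rankbelowomega2}: the matching inner invariants produce a unique $\tau\in T_G\cap H^m$, and the key new rank computation shows that if $\het(\Psi((S_n),(S'_n)))<\omega\cdot 2+m$ then $r(\tau)<\omega\cdot 2$ (otherwise the chain inequality $r(\tau\!\upharpoonright\!1)\geq r(\tau)+m-1\geq\omega\cdot 2+m-1$ would force $\het\geq\omega\cdot 2+m$, a contradiction), which in turn forces the inner coset tree $\Psi((S_n),(S'_n))_\tau$ to have height $<\omega\cdot 2$, where Lemma~\ref{seqrankbelowomega2} applies directly to yield illfoundedness. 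The resulting target equivalence relation has the form $(((E_0^\omega)^+)^+)^\omega$, which collapses to $(E_0^\omega)^{++}$ via Lemma~\ref{plusomega} applied to $E=(E_0^\omega)^+$ (the hypothesis $\mbox{id}(\omega)\times(E_0^\omega)^+\leq_B(E_0^\omega)^+$ is immediate from $\mbox{id}(\omega)\leq_B E_0^\omega$). The main difficulty will be the notational bookkeeping of the two layers of shifts and quantifiers, which the authors themselves describe as ``a notational morass'' involving no new ideas beyond the elementary rank calculation above.
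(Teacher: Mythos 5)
Your proposal is correct and takes the same route as the paper; the paper's own proof is a one-line remark citing the proof of Lemma~\ref{anybase}, Corollary~\ref{alltorsion}, and ``a generalized version of Lemma~\ref{seqrankbelowomega2} for the case in which the heights of the coset trees are strictly below $\omega\cdot 3$,'' and your proposal fills in exactly that generalization. A few remarks. First, your substitute for the paper's appeal to Lemma~\ref{anybase}, namely embedding $G$ into $\prod_n G/G_{n+1}$ so that $G_{\langle n\rangle}=G_n$ directly, is a clean and equivalent way to reduce to the hypotheses of Corollary~\ref{alltorsion}. Second, to invoke Corollary~\ref{alltorsion} one also needs $G$ tame; the torsion hypothesis of the theorem statement alone does not give this (e.g.\ $(\Z(p)^{<\omega})^\omega$ has all quotients $G_n/G_{n+1}\cong\Z(p)^{<\omega}$ torsion but is not tame). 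This is an issue with the statement of the theorem itself rather than with your argument, and you inherit it from the paper; it should be flagged that tameness is being used. Third, the key rank computation is correct: from $\het(\Psi)<\omega\cdot 2+m$ and $\lh(\tau)=m$ one gets $r(\tau)<\omega\cdot 2$, and since $\omega\cdot 2$ is additively closed under adding finite ordinals, $\het(\Psi_\tau)\leq r(\tau)+m<\omega\cdot 2$, so Lemma~\ref{seqrankbelowomega2} applies to $\Psi_\tau=\Psi((T_k),(T'_k))$ exactly as you describe. Fourth, a small inaccuracy: the invariant you build actually lands in $((((E_0^\omega)^+)^\omega)^+)^\omega$ rather than $(((E_0^\omega)^+)^+)^\omega$, since the inner relation from Lemma~\ref{seqrankbelowomega2}(iii) is $((E_0^\omega)^+)^\omega$; but these are Borel bireducible by Lemma~\ref{plusomega} and monotonicity of $+$ and $\omega$-power, so the conclusion $\leq_B (E_0^\omega)^{++}$ still holds. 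Finally, the reduction $\mathrm{id}(\omega)\times(E_0^\omega)^+\leq_B(E_0^\omega)^+$ is true but not quite ``immediate'' from $\mathrm{id}(\omega)\leq_B E_0^\omega$: one should note, e.g., that each coordinate $x_k\in 2^{\omega\times\omega}$ can be replaced by $y_k$ having a fixed extra column coding $n$, which preserves and reflects both $n$ and the $(E_0^\omega)^+$-class. None of these affect the correctness of the overall argument.
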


\begin{proof}
This follows from the proof of Lemma~\ref{anybase}, Corollary~\ref{alltorsion}, and a generalized version of Lemma~\ref{seqrankbelowomega2} for the case in which the heights of the coset trees are strictly below $\omega\cdot 3$.
\end{proof}

\begin{theorem} \label{threepluses} Let $G$ be a tame non-archimedean abelian Polish group. Then any $G$-orbit equivalence relation is Borel reducible to $(E_0^\omega)^{+++}$.
\end{theorem}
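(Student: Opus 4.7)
The plan is to follow exactly the same pattern that produced Theorems~\ref{oneplus} and~\ref{twopluses}, now iterating the ``pass to subtrees at a fixed point'' construction one additional time to cover the jump across the third limit rank. First, by Proposition~\ref{prop:abel} we realize $G$ as a closed subgroup of some $\prod_n H_n$ with each $H_n$ countable discrete abelian. Since $G$ is tame, Corollary~\ref{generalrank} gives the key a priori bound: every coset tree (and hence every group tree) $S \subseteq T_G$ has height $<\omega\cdot 4$. The Becker--Kechris theorem lets us reduce matters to analyzing the action of $G$ on $\mathcal{T}^\omega$, whose orbit equivalence relation $\tilde{E}_G$ is a universal $G$-orbit equivalence relation.

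Next, I would build the coding map by a triple iteration of the construction that produced $\tilde{\psi}_m$. For each $m \in \mathbb{N}_+$ and $(S_n) \in \mathcal{T}^\omega$, and for a triple of indices $(i_0,i_1,i_2) \in \omega^3$, enumerate elements of $S_n \cap H^m$ in the order determined by $\gamma_m(i_0)$, pass to the resulting subtrees $(S_n)_{\sigma}$, then repeat this ``enumerate--split'' step on these subtrees using $i_1$, then a third time using $i_2$. After three iterations apply the previous coding $\tilde{\varphi}$ (which, by the $(\Rightarrow)$ halves of Lemmas~\ref{seqfiniterank} and~\ref{seqrankbelowomega2}, is already a one-sided invariant for orbits of sequences of trees). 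Call the resulting map $\hat{\psi}_m^{(3)} : \mathcal{T}^\omega \to (2^{\omega\times\omega\times\omega\times\omega})^\omega$ or, more usefully, indexing so that ``the $i_0$th outer choice'' is the outermost quantifier variable.

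The heart of the argument is the analogue of Lemma~\ref{seqrankbelowomega2}: since the relevant coset trees $\Psi((S_n),(S'_n))$ all have height $<\omega\cdot 4$, for each $m$ there is a uniform $m'$ such that the height is bounded by $\omega\cdot 3 + m'$, and a matching choice of $i_0$ corresponds to moving across the $\omega\cdot 3$-rank limit, $i_1$ across the $\omega\cdot 2$-rank limit, $i_2$ across the $\omega$-rank limit, and then finite ranks are handled directly by $E_0^\omega$. The $(\Rightarrow)$ directions never need a rank bound (as already noted in the earlier proofs); the $(\Leftarrow)$ direction proceeds exactly as in Lemma~\ref{rankbelowomega2} except that one reconstructs the equivalence by three successive applications of the identity $T = \Phi(S,S')_\tau$ (or its sequence analogue), splitting off the highest-rank initial segment at each stage. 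Combining these, $(S_n)\tilde{E}_G (S'_n)$ is equivalent to: for each $m$, the $\omega$-sequences of $\hat{\psi}_m^{(3)}((S_n))(i_0,\cdot,\cdot)$ and $\hat{\psi}_m^{(3)}((S'_n))(i'_0,\cdot,\cdot)$ match under the doubly nested $\exists\forall$ / $\forall\exists$ pattern, yielding the relation $((((E_0^\omega)^+)^+)^+)^\omega$ on the invariants.

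Finally, I would apply Lemma~\ref{plusomega} three times (noting that $E_0^\omega$, $(E_0^\omega)^+$ and $(E_0^\omega)^{++}$ all satisfy $\mathrm{id}(\omega)\times E \leq_B E$, since they absorb countable disjoint unions) to absorb the outer $\omega$-product into the final $+$, obtaining a Borel reduction to $(E_0^\omega)^{+++}$. I expect the main obstacle to be purely notational: organizing the three levels of enumeration and the associated indices $(i_0,i_1,i_2)$ without collapsing distinctions, and verifying that the appropriate coset-tree height bound shrinks by $\omega$ at each stage of the splitting (so that after three splittings one is in the ``height $<\omega$'' regime where Lemma~\ref{seqfiniterank} applies). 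No genuinely new idea is required beyond those used in Theorems~\ref{oneplus} and~\ref{twopluses}; the argument is a bookkeeping exercise driven by the rank bound $\omega\cdot 4$ from Corollary~\ref{generalrank}.
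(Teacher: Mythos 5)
Your proposal follows essentially the same approach as the paper, which also cites the height bound $<\omega\cdot 4$ from Corollary~\ref{generalrank} and a triply iterated version of the enumerate--split coding from Lemma~\ref{seqrankbelowomega2}, finished off by Lemma~\ref{plusomega}. The paper's own proof is if anything terser than yours, explicitly dismissing the iterated construction as ``nothing but a notational morass.''
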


\begin{proof}
This follows from the proof of Lemma~\ref{anybase}, Corollary~\ref{generalrank}, and a generalized version of Lemma~\ref{seqrankbelowomega2} for the case in which the heights of the coset trees are strictly below $\omega\cdot 4$.
\end{proof}

By a theorem of Hjorth, Kechris and Louveau (\cite{HKL}, Theorem 2) on the potential Borel hierarchy of Borel equivalence relations, Theorem~\ref{threepluses} implies that any $G$-orbit equivalence relation for a non-archimedean abelian Polish group $G$ is {\em potentially} ${\bf\Pi}^0_6$, i.e., there exists a Polish topology of the underlying space for which the equivalence relation is ${\bf\Pi}^0_6$. This is in contrast with the results by Hjorth in \cite{Hj}, where he constructed examples of non-archimedean Polish groups which are TSI and tame, but have orbit equivalence relations of arbitrarily high potential Borel class.

\section{Essentially countable orbit equivalence relations}

In this section we consider orbit equivalence relations of non-archimedean abelian Polish groups that are essentially countable. In the proof of the following theorem we encounter the concept of direct sums of equivalence relations. For completeness we give the definition here. Let $(X_n)$ be a sequence of Polish spaces. The {\em direct sum space} $\bigoplus_n X_n$ is the Polish space whose underlying set is the disjoint union of all $X_n$, $n\in\omega$, and whose topology is given by
$$ U\subseteq \bigoplus_n X_n \mbox{ is open} \iff \mbox{for all $n\in\omega$, $U\cap X_n$ is open in $X_n$.} $$
Now if for each $n\in\omega$, $E_n$ is an equivalence relation on $X_n$, then the {\em direct sum} $\bigoplus_n E_n$ is an equivalence relation on $\bigoplus_n X_n$ defined as
$$ (x, y)\in \bigoplus_n E_n\iff \exists n\in\omega\ (x, y)\in E_n. $$
It is immediate that if each $E_n$ is hyperfinite, i.e., an increasing union of finite Borel equivalence relations, then so is $\bigoplus_n E_n$.

\begin{theorem}\label{essentialcountable}Let $G$ be a non-archimedean abelian Polish group. Let $E$ be an essentially countable Borel equivalence relation which is Borel reducible to a $G$-orbit equivalence relation. Then $E$ is essentially hyperfinite, i.e., $E\leq_B E_0$. 
\end{theorem}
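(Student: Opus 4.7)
The plan is to combine a stabilizer decomposition of $X$ with the Gao--Jackson theorem. Using Proposition~\ref{prop:abel}, realize $G$ as a closed subgroup of $\prod_n H_n$ with each $H_n$ countable discrete abelian, and let $(G_n)=(G_{\langle n\rangle})$ be the canonical decreasing nbhd base of open subgroups of $G$. For each $n$, set $X_n=\{x\in X:G_n\subseteq G_x\}$. Each $X_n$ is Borel (intersecting the Borel sets $\{x:g\cdot x=x\}$ over a countable dense subgroup of $G_n$), and because $G$ is abelian we have $G_{gx}=G_x$ for all $g\in G$, so $X_n$ is $G$-invariant. On $X_n$ the $G$-action factors through the countable discrete abelian quotient $G/G_n$, so by the Gao--Jackson theorem $F\restriction X_n$ is hyperfinite.

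The sequence $(X_n)$ is increasing and its union $X^c:=\bigcup_n X_n$ equals $\{x\in X:[x]_F\text{ is countable}\}$: a closed subgroup of a Polish group has countable index if and only if it is open (by Baire category), and every open subgroup of $G$ contains some $G_n$ by the nbhd-base property. Therefore $F\restriction X^c$, as an increasing union of hyperfinite equivalence relations on $G$-invariant Borel sets, is itself hyperfinite; in particular $F\restriction X^c\le_B E_0$.

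It remains to show $E\le_B F\restriction X^c$. Using the equivalent formulation of essential countability via countable Borel complete sections, I may assume $E$ is a countable Borel equivalence relation on a Polish space $Y$, with $\psi:Y\to X$ a Borel reduction of $E$ into $F$. Since $X^c$ is $F$-saturated Borel, the splitting $Y=Y^c\sqcup Y^u$ with $Y^c=\psi^{-1}(X^c)$ consists of $E$-invariant Borel sets, and $\psi\restriction Y^c$ reduces $E\restriction Y^c$ into $F\restriction X^c$ directly. For $Y^u$, the plan is to exhibit a countable abelian Borel group action realizing $E\restriction Y^u$: use Feldman--Moore to write $[y]_E=\{\sigma_n(y):n\in\omega\}$ with each $\sigma_n$ Borel, apply Kuratowski--Ryll-Nardzewski to select Borel $g_n:Y^u\to G$ with $g_n(y)\cdot\psi(y)=\psi(\sigma_n(y))$, and descend the resulting cocycle through the countable abelian quotients $G/G_m$ to realize $E\restriction Y^u$ as an orbit equivalence relation of a countable abelian Borel action, to which Gao--Jackson applies. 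Combining the two pieces then yields $E\le_B E_0$. The main obstacle is this final descent step---coherently factoring the $G$-valued cocycle through a single countable abelian quotient so that the resulting orbit equivalence on $Y^u$ is exactly $E\restriction Y^u$; this is where both the abelian hypothesis (so that cocycle identities survive passage to quotients) and the non-archimedean structure (furnishing the countable quotients $G/G_m$) will play an essential role.
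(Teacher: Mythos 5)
Your stabilizer decomposition correctly handles the part of the space with countable orbits. The sets $X_n=\{x:G_{\langle n\rangle}\subseteq G_x\}$ are indeed Borel, $G$-invariant (since $G$ is abelian), increasing, and on each of them the action factors through the countable discrete abelian quotient $G/G_{\langle n\rangle}$, so $F\restriction X_n$ is hyperfinite by Gao--Jackson and $F\restriction X^c$ is hyperfinite as an increasing union of hyperfinite relations on invariant Borel sets. The identification $X^c=\{x:[x]_F\text{ countable}\}$ via the Baire category fact (closed subgroup of countable index is open) is also correct. So $E\restriction Y^c$ is handled.

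However, there is a genuine gap at the step you yourself flag: the ``descent'' for $Y^u$. For $y\in Y^u$, the stabilizer $G_{\psi(y)}$ is by construction \emph{not} open, so no $G_{\langle m\rangle}$ is contained in it. Consequently the $G$-valued cocycle $(y,n)\mapsto g_n(y)$ does not factor through any single countable quotient $G/G_{\langle m\rangle}$: after projecting, two elements $g_n(y),g_{n'}(y)$ with $\psi(\sigma_n(y))\neq\psi(\sigma_{n'}(y))$ may become identified in $G/G_{\langle m\rangle}$ (because $G_{\langle m\rangle}\not\subseteq G_{\psi(y)}$, some element of $G_{\langle m\rangle}$ moves $\psi(y)$), so the projected cocycle no longer reconstructs $E\restriction Y^u$. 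There is also no uniform choice of $m$ that works across all $E$-classes, since $G_{\psi(y)}$ varies with $y$ and has empty interior. The abelian assumption does guarantee that the cocycle identity survives projection, but that is the easy part; the hard part is precisely that the projected cocycle loses the orbit information, and you have not shown how to recover it. This is not a technical wrinkle: it is the heart of the matter, and your approach amounts to trying to reprove the required localization result from scratch.

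The paper avoids this entirely by citing a theorem of Hjorth and Kechris (Theorem 7.3 of \cite{HK}): after passing to the ambient product $H=\prod_n H_n$ via the Hjorth--Mackey theorem, one obtains directly that $E\leq_B\bigoplus_{m}E^{Z_m}_{H^m}$ for suitable Borel actions of the countable discrete abelian groups $H^m=H_0\times\dots\times H_{m-1}$; each summand is hyperfinite by Gao--Jackson, and a countable direct sum of hyperfinite relations is hyperfinite. That theorem is exactly the machinery that handles your $Y^u$ region (and $Y^c$ as well, uniformly), and without it or something equivalent your argument does not close.
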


\begin{proof}
Let $G$ be a closed subgroup of $H=\prod_n H_n$, where each $H_n$ is countable discrete abelian. By a theorem of Hjorth and Mackey (c.f., e.g., \cite{GaoBook} Theorem 3.5.2) the universal $G$-orbit equivalence relation is Borel reducible to the universal $H$-orbit equivalence relation. Thus $E$ is an essentially countable Borel equivalence relation which is Borel reducible to an $H$-orbit equivalence relation.

We now use a theorem of Hjorth and Kechris from \cite{HK}, Theorem 7.3, to get a sequence of Polish spaces $(Z_m)_{m\in\mathbb{N}_+}$, with a Borel action of $H^m$ on $Z_m$ for each $m\in\mathbb{N}_+$, such that 
$$ E\leq_B\bigoplus_{m\in\mathbb{N}_+} E^{Z_m}_{H^m}. $$

Since each $H^m$ is a countable discrete abelian group, each $E^{Z_m}_{H^m}$ is hyperfinite by the theorem of Gao and Jackson \cite{GJ}. It follows that $\bigoplus_m E^{Z_m}_{H^m}$ is hyperfinite. Thus $E$ is essentially hyperfinite.
\end{proof}

\begin{corollary} Let $G$ be a locally compact non-archimedean abelian Polish group. Then any $G$-orbit equivalence relation is essentially hyperfinite.
\end{corollary}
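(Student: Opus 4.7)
The plan is to deduce this corollary as a one-line application of Theorem~\ref{essentialcountable}, invoking two facts recalled in the background. First I would fix a $G$-orbit equivalence relation $E^X_G$ induced by a Borel action of $G$ on a Polish space $X$. Since $G$ is locally compact Polish, it is tame (as noted in Section 2.3, via Kechris's theorem), so $E^X_G$ is in fact a Borel equivalence relation.

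Next I would observe that Kechris's theorem (cited in Section 2.3 as \cite{GaoBook} Theorem 7.5.2) states that any orbit equivalence relation of a locally compact Polish group action is essentially countable. Applied to our situation, this yields that $E^X_G$ is essentially countable. Trivially $E^X_G \leq_B E^X_G$, so $E^X_G$ is an essentially countable Borel equivalence relation which is Borel reducible to a $G$-orbit equivalence relation.

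The hypotheses of Theorem~\ref{essentialcountable} are then met: $G$ is non-archimedean abelian Polish (being locally compact non-archimedean abelian Polish), and $E^X_G$ is an essentially countable Borel equivalence relation Borel reducible to a $G$-orbit equivalence relation. Applying the theorem directly gives $E^X_G \leq_B E_0$, i.e., $E^X_G$ is essentially hyperfinite. Since $E^X_G$ was arbitrary, this establishes the corollary.

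There is really no obstacle here; the corollary is a packaging of two existing results. The only thing to keep track of is that the ``locally compact'' hypothesis on $G$ is used solely to invoke Kechris's essential-countability theorem, while the ``non-archimedean abelian'' hypothesis is used solely to invoke Theorem~\ref{essentialcountable}. The real work has already been done in the proof of Theorem~\ref{essentialcountable}, which in turn rests on the Hjorth--Kechris structure theorem for essentially countable equivalence relations reducible to product-group actions and on the Gao--Jackson hyperfiniteness theorem for countable abelian group actions.
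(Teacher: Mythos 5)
Your proposal is correct and matches the paper's proof exactly: both invoke Kechris's theorem (that locally compact Polish group actions give essentially countable orbit equivalence relations) and then apply Theorem~\ref{essentialcountable}. You spell out the steps more explicitly, but the argument is the same.
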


\begin{proof} By a theorem of Kechris (c.f. \cite{GaoBook} Theorem 7.5.2), any locally compact Polish group action induces only essentially countable Borel equivalence relations.
\end{proof}

\begin{corollary}\label{7thdichotomy} Let $G$ be a non-archimedean abelian Polish group, $F$ be a Borel $G$-orbit equivalence relation and $E\leq_B F$. Then exactly one of the following holds:
\begin{enumerate}
\item[\rm (I)] $E$ is essentially hyperfinite, i.e., $E\leq_B E_0$. 
\item[\rm (II)] $E_0^\omega\leq_B E$.
\end{enumerate}
\end{corollary}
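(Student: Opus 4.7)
The plan is to deduce this corollary by combining the Seventh Dichotomy Theorem of Hjorth and Kechris (cited in Section 2) with Theorem~\ref{essentialcountable} just proved. Since every non-archimedean abelian Polish group is non-archimedean TSI (any abelian group trivially admits a two-sided invariant metric whenever it admits a left-invariant one), the Seventh Dichotomy applies to $G$ and $F$: for any $E\leq_B F$, either $E$ is essentially countable, or $E_0^\omega\leq_B E$.

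In the first case, Theorem~\ref{essentialcountable} immediately upgrades "essentially countable" to "essentially hyperfinite", giving clause (I). In the second case we have clause (II) directly. Thus at least one of (I), (II) holds.

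The remaining task is to verify that (I) and (II) cannot both hold, i.e., that $E_0^\omega\not\leq_B E_0$. This is the standard fact that $E_0^\omega$ is strictly above $E_0$ in the Borel reducibility hierarchy; indeed $E_0^\omega$ is not even essentially countable, because a countable Borel equivalence relation has all classes countable while $E_0^\omega$ has all classes of size continuum, and Borel reducibility to a countable Borel equivalence relation would contradict this together with standard dichotomy/Silver-type considerations (or one can simply quote that $E_0^\omega\not\leq_B E_\infty$, a result of Kechris, which certainly implies $E_0^\omega\not\leq_B E_0$). Either way, mutual exclusivity holds.

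The only step requiring any thought is checking that the hypotheses of the Seventh Dichotomy are met, namely that $G$ is non-archimedean TSI; this is immediate from abelianness. The hard part has already been done in Theorem~\ref{essentialcountable}, so assembling this corollary is essentially a one-line invocation of that theorem together with the cited dichotomy.
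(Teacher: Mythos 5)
Your proposal is correct and follows the same route as the paper: invoke the Seventh Dichotomy Theorem of Hjorth–Kechris (noting that abelian implies TSI) to split into the cases essentially countable vs.\ $E_0^\omega\leq_B E$, then use Theorem~\ref{essentialcountable} to upgrade ``essentially countable'' to ``essentially hyperfinite.'' The only superfluous step is your separate check that (I) and (II) are exclusive --- this is already built into the Seventh Dichotomy Theorem, and your first argument for it (class size alone) is not quite sufficient, though your fallback appeal to $E_0^\omega\not\leq_B E_\infty$ is fine.
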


\begin{proof} This follows immediately from the Seventh Dichotomy Theorem (\cite{HK} Theorem 8.1) of Hjorth and Kechris, which is stated for TSI groups with option (I) being $E$ essentially countable.
\end{proof}

Now we can see that the upper bounds in Theorems ~\ref{oneplus}, \ref{twopluses} and \ref{threepluses} are not sharp. In fact, let $G$ be a tame non-archimedean abelian Polish group. Let $E$ be any countable Borel equivalence relation that is not hyperfinite (c.f. \cite{GaoBook} Theorem 7.4.10 for an example). By the Seventh Dichotomy Theorem, $E_0^\omega\not\leq_B E$. It follows from Corollary~\ref{7thdichotomy} that $E$ is not Borel reducible to any Borel $G$-orbit equivalence relation. On the other hand, let $\mbox{id}(2^\omega)$ be the identity relation on the Polish space $2^\omega$. Then $E\leq_B \mbox{id}(2^\omega)^+$  (c.f. \cite{GaoBook} Exercise 8.3.3) and $\mbox{id}(2^\omega)^+\leq_B (E_0^\omega)^+$ since $\mbox{id}(2^\omega)\leq_B E_0^\omega$ (c.f. \cite{GaoBook} Theorem 5.3.5 and Exercise 8.3.2). Thus $E\leq_B (E_0^\omega)^+$. Therefore, $(E_0^\omega)^+$ is not Borel reducible to any $G$-orbit equivalence relation.

\section{Universal tame product groups}

Let $\mathcal{P}$ denote the class of all tame groups of the form $\prod_n H_n$, where each $H_n$ is countable discrete abelian. In this section we prove that $\mathcal{P}$ has a universal element, i.e., there exists $H\in \mathcal{P}$ such that any $G\in \mathcal{P}$ is a closed subgroup of $H$.

We will use the following trivial facts tacitly.

\begin{lemma} Let $H=\prod_n H_n$, where each $H_n$ is a countable discrete abelian group. Then the following hold:
\begin{enumerate}
\item[\rm (a)] If $G=\prod_n G_n$, where each $G_n$ is a subgroup of $H_n$, then $G$ is a closed subgroup of $H$.
\item[\rm (b)] For any strictly increasing sequence $(n_k)_{k\in\omega}$ of natural numbers with $n_0=0$, if we define
$$ L_k=\prod_{n_k\leq i<n_{k+1}}H_i $$
for $k\in\omega$, then $H$ is isomorphic to $\prod_k L_k$.
\item[\rm (c)] More generally, if $\mathcal{F}=\{F_0, F_1, \dots, F_k,\dots\}$ is a partition of $\omega$ into finite subsets, letting
$$ L_k=\prod_{i\in F_k} H_i $$
for $k\in\omega$, then $H$ is isomorphic to $\prod_k L_k$.
\end{enumerate}
\end{lemma}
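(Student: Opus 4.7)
The plan is to verify each clause directly from the definitions, as they are essentially bookkeeping facts about the product topology and coordinate-wise group operations.

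For (a), I would first note that since each $H_n$ carries the discrete topology, every subgroup $G_n\leq H_n$ is automatically closed in $H_n$. Then $G=\prod_n G_n$ is a subgroup of $H$ because both the group multiplication and inversion on $H$ are defined coordinate-wise. Finally, $G$ is closed in $H$ as a product of closed sets, since a basic open neighborhood in $H$ of a point $x\notin G$ can be found by picking some coordinate $n$ with $\pi_n(x)\notin G_n$ and using $\pi_n^{-1}(\{\pi_n(x)\})$.

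For (b), I would define the natural grouping map $\phi:\prod_n H_n\to\prod_k L_k$ by
$$\phi(x)(k)=(x(n_k),x(n_k+1),\dots,x(n_{k+1}-1))\in L_k.$$
It is immediate from the coordinate-wise definition of the group operations that $\phi$ is a group homomorphism, and it is a bijection with inverse given by ungrouping, since the sets $[n_k,n_{k+1})$ partition $\omega$. Continuity in both directions follows from the fact that convergence in a product topology is coordinate-wise, together with the observation that for each fixed $k$ the $k$-th coordinate in $\prod_k L_k$ depends on only finitely many coordinates in $\prod_n H_n$, and vice versa. Thus $\phi$ is an isomorphism of topological groups.

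For (c), the argument is the same as in (b), after fixing an arbitrary enumeration of each finite set $F_k$ to identify $\prod_{i\in F_k}H_i$ with a finite product in a chosen order. Finiteness of each $F_k$ is what guarantees that $L_k$ is again countable discrete abelian (a finite product of countable discrete abelian groups is countable discrete abelian), and it is also what makes the $k$-th coordinate of $\phi(x)$ depend on only finitely many coordinates of $x$, so the continuity argument goes through without change. There is no real obstacle here; the only thing to keep in mind across all three parts is that the discreteness of each $H_n$ and the finiteness of the blocks in (b) and (c) are what make the topological claims immediate.
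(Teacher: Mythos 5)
Your verification is correct, and the paper itself offers no proof for this lemma --- it is introduced with the remark that these are ``trivial facts'' to be used ``tacitly.'' Your direct check (coordinate-wise operations make $G$ a subgroup, discreteness makes each $G_n$ closed and hence $\prod_n G_n$ closed in the product topology, and the block-grouping map is a bicontinuous isomorphism because each coordinate on either side depends on only finitely many coordinates on the other) is exactly the standard argument one would supply.
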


Fix an enumeration of all primes $p_0, p_1,\dots, p_n,\dots$. We now define a universal tame product group $H_\infty=\prod_n H_n$ as follows. Let 
$$ H_0=A_\infty=\bigoplus_{p\in\mathbb{P}}\mathbb{Z}(p^\infty)^{<\omega}\oplus\mathbb{Q}^{<\omega}. $$
For $n\in\omega$ let 
$$ H_{n+1}=\bigoplus_{0\leq i\leq n} \mathbb{Z}(p_i^\infty)\oplus\bigoplus_{i>n}\mathbb{Z}(p_i^\infty)^{<\omega}. $$

\begin{proposition}\label{universalproduct} $H_\infty$ is a universal group in $\mathcal{P}$.
\end{proposition}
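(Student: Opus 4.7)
The plan is to rearrange $G = \prod_n G_n \in \mathcal{P}$ by finite block regrouping, decompose each resulting factor into cyclic and quasicyclic pieces, and route each piece into a suitable slot of $H_\infty$. By Theorem~\ref{main1technical}, I can choose a strictly increasing sequence $M_0 < M_1 < \cdots$ in $\N_+$ such that for every $n \ge 1$ and every $k \ge M_{n-1}$, the group $G_k$ is torsion and $(G_k)_{p_i}$ is $p_i$-compact for every $i < n$. Setting $\tilde G_0 = G_0 \oplus \cdots \oplus G_{M_0-1}$ and $\tilde G_n = G_{M_{n-1}} \oplus \cdots \oplus G_{M_n-1}$ for $n \ge 1$ (all finite direct sums), I obtain $\prod_n G_n \cong \prod_n \tilde G_n$. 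By Lemma~\ref{extansion}, each $\tilde G_n$ for $n \ge 1$ is torsion and $(\tilde G_n)_{p_i}$ is $p_i$-compact for every $i < n$, and Proposition~\ref{soleckipcompact}(iii) then lets me write each such $(\tilde G_n)_{p_i}$ as a finite direct sum $\bigoplus_{j=1}^{r_{n,i}} A_{n,i,j}$ of cyclic groups $\Z(p_i^a)$ and quasicyclic groups $\Z(p_i^\infty)$.

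Next I will define $\phi : \tilde G \to H_\infty$ piece by piece. Embed $\tilde G_0$ into $H_0 = A_\infty$ using the universality of $A_\infty$ among countable discrete abelian groups. For each $n \ge 1$ and each $i \ge n$, the group $(\tilde G_n)_{p_i}$ is an arbitrary countable $p_i$-group, which embeds into the $\Z(p_i^\infty)^{<\omega}$-slot of $H_n$, since $\Z(p^\infty)^{<\omega}$ is divisible of infinite $p$-rank and therefore contains every countable $p$-group as a subgroup. For each ``head'' piece $A_{n,i,j}$ with $i < n$, I will assign it to a distinct $\Z(p_i^\infty)$-slot in some $H_m$ with $m > i$; each $A_{n,i,j}$ is cyclic or quasicyclic of $p_i$-power and hence embeds into $\Z(p_i^\infty)$. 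Since the available $p_i$-slots $\{H_m : m > i\}$ form an infinite set while only countably many pieces carry the prime $p_i$, a greedy enumeration that processes pieces in some order and assigns each to the smallest still-unused $p_i$-slot yields a valid injective assignment.

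By construction each coordinate $\phi_m$ depends on only finitely many $g_k$'s, so $\phi$ is a continuous homomorphism, and it is injective because distinct pieces are routed to distinct slots. To see that $\phi$ is a topological embedding, given $k$ I let $m$ exceed the levels of all slots receiving a component of $\tilde G_0, \ldots, \tilde G_{k-1}$ (a finite set); then $\phi^{-1}(H_{\langle m \rangle}) \subseteq \tilde G_{\langle k \rangle}$. Hence $\phi$ is a topological embedding of the Polish group $\tilde G$ into $H_\infty$, so its image is automatically closed. The main obstacle is the scheduling step, which requires coordinating countably many cyclic and quasicyclic pieces with countably many slots under the slot-injectivity constraint; everything else reduces to standard structural facts about countable abelian groups together with the universal embeddings of $A_\infty$ and $\Z(p^\infty)^{<\omega}$.
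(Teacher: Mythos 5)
Your argument is correct but takes a genuinely different route from the paper. The paper first replaces each $G_n$ by a divisible overgroup whose $p$-components all have the form $\Z(p^\infty)^m$ or $\Z(p^\infty)^{<\omega}$, regroups $\prod_n G_n$ into blocks $L_k$, and regroups a padded copy $\prod_n(H_n\times A_n)$ of $H_\infty$ (with $A_{n+1}=\bigoplus_{i>n}\Z(p_i^\infty)^{<\omega}$) into blocks $K_k$; one then checks $L_k\le K_k$ directly, so the embedding of $\prod_k L_k$ into $\prod_k K_k$ is a coordinatewise product of subgroup inclusions and its image is closed for the trivial reason that a product of subgroups is closed. You instead decompose each $\tilde G_n$ all the way down to individual cyclic and quasicyclic summands and route them to individual $\Z(p_i^\infty)$- and $\Z(p_i^\infty)^{<\omega}$-slots; since the resulting image is not a product of subgroups, you need the (standard but nontrivial) fact that a topologically embedded Polish subgroup of a Polish group is automatically closed. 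Both are fine; the paper's block bookkeeping (the sequences $(N_n)$, $(M_n)$, the $A_n$ padding) is traded for your greedy-scheduling bookkeeping. One small but genuine omission: $\mathcal{P}$ consists of \emph{tame} product groups, so you also need to check $H_\infty\in\mathcal{P}$; this is immediate from Theorem~\ref{soleckimain} and Proposition~\ref{soleckipcompact} because $H_n$ is $p_i$-compact whenever $n>i$, but it has to be said. It would also be worth making explicit why your tail routing and head routing never compete for the same slot: a tail piece $(\tilde G_n)_{p_i}$ uses the $p_i$-slot of $H_n$ only when $i\ge n$, while a head piece for $p_i$ uses the $p_i$-slot of $H_m$ only when $m>i$, and $i\ge m$ together with $m>i$ is impossible.
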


\begin{proof}
To see that $H_\infty$ is tame, we only need to use Theorem~\ref{soleckimain} and Proposition~\ref{soleckipcompact}, and note that for any $i\in\omega$, $H_n$ is $p_i$-compact when $n>i$.

Before proving the universality of $H_\infty$, we define an isomorphic copy of $H_\infty$.   Let $A_0=\{0\}$. For each $n\in\omega$, let 
$$ A_{n+1}=\bigoplus_{i>n}\mathbb{Z}(p_i^\infty)^{<\omega}. $$
Then for each $n\in\omega$, $H_n\times A_n$ is isomorphic to $H_n$, and therefore $H_\infty$ is isomorphic to $\prod_n (H_n\times A_n)$. It suffices to show the universality of the latter group.

Now let $G\in\mathcal{P}$ and suppose $G=\prod_n G_n$, where each $G_n$ is countable discrete abelian. Since every countable $p$-group is a subgroup of a countable divisible $p$-group, and each countable divisible $p$-group is a direct sum of up to countably infinitely many copies of $\mathbb{Z}(p^\infty)$ (c.f. \cite{Ro} 4.1.5 and 4.1.6), we may assume without loss of generality that the $p$-component of $G_n$ is of the form $\mathbb{Z}(p^\infty)^m$ for some $m\in\omega$ or of the form $\mathbb{Z}(p^\infty)^{<\omega}$.

Inductively define a sequence $(n_k)$ as follows. Let $n_0$ be the least such that for all $n\geq n_0$, $G_n$ is $p_0$-compact. In particular, for all $n\geq n_0$, $G_n$ is torsion, and hence by our assumption is a direct sum of its $p$-components for all $p\in\mathbb{P}$. If $n_k$ is already defined, let $n_{k+1}>n_k$ be the least such that for all $n\geq n_{k+1}$, $G_n$ is $p_{k+1}$-compact.  

Define $L_0=\prod_{i<n_0} G_i$, if $n_0>0$; otherwise let $L_0=\{0\}$. For $k>0$, let
$$ L_k=\prod_{n_{k-1}\leq i<n_k} G_i. $$
Then $\prod_n L_n$ is isomorphic to $\prod_n G_n$, and the $p$-component of each $L_n$ is still of the form $\mathbb{Z}(p^\infty)^m$ for some $m\in\omega$ or of the form $\mathbb{Z}(p^\infty)^{<\omega}$. Now for all $n>k$, $L_n$ is $p_k$-compact, and therefore its $p_k$-component is of the form $\mathbb{Z}(p_k^\infty)^m$ for some $m\in\omega$. Denote this $m$ by $m(n, k)$.

For all $n\in\mathbb{N}_+$, define $$M_n=\max\{ m(n,k)\,:\, k<n\} $$
and inductively define
$$ N_1=1, \ \ N_{n+1}=N_n+M_n+1. $$
Then $(N_n)$ is strictly increasing, and for all $n\in\mathbb{N}_+$, $N_n\geq n$.

Define $K_0=H_0=H_0\times A_0$. For $n\in\mathbb{N}_+$, define
$$ K_n=A_n\times\prod_{N_n\leq i<N_{n+1}} H_i. $$
Then $\prod_n K_n$ is isomorphic to $\prod_n (H_n\times A_n)$, and it suffices to show that each $L_n$ is a subgroup of $K_n$. 

For $n=0$, $K_0=H_0=A_\infty$ is a universal countable abelian group, and thus $L_0\leq K_0$. Now fix $n\in\mathbb{N}_+$. By our construction, for every $k<n$, $L_n$ is $p_k$-compact, and its $p_k$-component is of the form $\mathbb{Z}(p_k^\infty)^{m(n,k)}$, while the $p_k$-component of $K_n$ is of the form $\mathbb{Z}(p_k^\infty)^{M_n+1}$. For $k\geq n$, the $p_k$-component of $K_n$ is of the form $\mathbb{Z}(p_k^\infty)^{<\omega}$. Thus for all $k\in\omega$, the $p_k$-component of $L_n$ is a subgroup of the $p_k$-component of $K_n$. Since $L_n$ and $K_n$ are both torsion, we have that $L_n$ is a subgroup of $K_n$.
\end{proof}

\section{Other remarks and open problems}

There are many questions left open on the subject of this paper. On the structure of tame non-archimedean abelian Polish groups, the following is a basic question with the most potential impact.

\begin{question} Is every tame non-archimedean abelian Polish group a closed subgroup of a tame group which is a countable product of countable discrete abelian groups?
\end{question}

A positive answer would not only simplify many results of this paper but also imply the existence of a universal tame non-archimedean abelian Polish groups by Proposition~\ref{universalproduct}. 

In Solecki's Theorem~\ref{soleckimain}, the necessity direction is true without the abelian assumption. Thus in particular it applies to all countable products of countable discrete groups. We do not know if our Lemma~\ref{nt2np}, which is a generalization of Solecki's necessity condition, is still true for TSI groups, i.e., closed subgroups of countable products of countable discrete groups. Our current proof of Lemma~\ref{nt2np} for the abelian case uses Lemma~\ref{extansion}, which is demonstrably false for non-abelian groups.

Our interest on the existence of a universal, surjectively universal, or weakly universal object in a class of Polish groups is because these questions are good test questions on our understanding on the structure of the groups. Unfortunately all of these questions are open for tame non-archimedean abelian Polish groups. Chang \cite{Ch} has shown that there is a surjectively universal locally compact non-archimedean abelian Polish group. 

In view of Proposition~\ref{universalproduct}, we are particularly interested in the question on universality.

\begin{question} Is there a universal tame non-archimedean abelian Polish group?
\end{question}

Turning to the orbit equivalence relations of tame non-archimedean abelian Polish groups, we already remarked before that none of the upper bounds in the form of $E^+$ are sharp. For the rank of group trees, we know that the bound $\omega$ in Theorem~\ref{allpcompact} and the bound $\omega\cdot 2$ in Corollary~\ref{alltorsion} are sharp, but we do not know that the bound $\omega\cdot 3$ in Theorem~\ref{generalrank} is.

We also do not know that the bound on the potential Borel class of the orbit equivalence relations, that they are all potentially ${\bf\Pi}^0_6$, is sharp.  In fact, we propose the following bold conjecture.

\begin{conjecture} Let $G$ be any non-archimedean abelian Polish group. Then every $G$-orbit equivalence relation is Borel reducible to $E_0^\omega$, and therefore is potentially ${\bf\Pi}^0_3$.
\end{conjecture}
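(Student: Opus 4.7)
The plan is to start from the tree analysis of Section 5 and try to eliminate the jumps $E \mapsto E^+$ that enter each time the height of a coset tree crosses a limit ordinal. Concretely, the reduction to $(E_0^\omega)^{+++}$ in Theorem~\ref{threepluses} accumulates one $+$ at each of the plateaus $\omega$, $\omega\cdot 2$, $\omega\cdot 3$ in the height bound $\omega\cdot 4$. My first step would be to revisit Lemma~\ref{rankbelowomega2} and ask under what extra hypothesis one can \emph{canonically} pick, for each pair of trees $S, S'$ lying in the same $G$-orbit, a single coset representative witnessing this in a Borel way. Such a Borel selector would replace the quantifications ``for every $i$ there is $i'$'' in clause (iii) of that lemma by an equality, removing one $+$.

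The main technical step I would attempt is a ``canonical invariant'' theorem for tame non-archimedean abelian Polish groups: assign to each $x \in X$ a Borel sequence of invariants $(\alpha_n(x))_{n\in\omega}$, each taking values in a hyperfinite Borel equivalence class, so that $x\, E^X_G\, y$ if and only if $\alpha_n(x)\, E_0\, \alpha_n(y)$ for every $n$. To build $\alpha_n$, I would use the tower $(G_k)$ provided by Theorem~\ref{main1}(ii), apply the Gao--Jackson theorem to the action of the countable abelian quotient $G/G_k$ on a Borel section of the $G_k$-orbit equivalence relation, and let $\alpha_n$ encode the $E_0$-class at level $n$. The torsion and finite-$p$-torsion hypotheses from Theorem~\ref{main1technical}(2) should force the successive sections to fit together coherently, by an inductive coding analogous to the construction of $\tilde{\psi}_m$ but with the enumeration index $i$ absorbed into the hyperfinite invariant.

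The main obstacle is precisely that the Gao--Jackson theorem does not a priori produce a Borel section compatible under refinement: at each level the finite subequivalences witnessing hyperfiniteness depend on choices that may be incompatible between $G_k$ and $G_{k+1}$. Overcoming this would likely require a ``uniform'' Gao--Jackson theorem along the tower $(G_k)$, which is not currently known. A reasonable warm-up, and probably the right place to start, is to establish the conjecture under the stronger hypothesis of Theorem~\ref{allpcompact} (every $G_{\langle n\rangle}/G_{\langle n+1\rangle}$ is $p$-compact for every prime $p$), where group trees have height at most $\omega$ and coset trees have height below $\omega\cdot 2$. In that restricted case only a single plateau is present, so the single $+$ in $(E_0^\omega)^+$ of Theorem~\ref{oneplus} should already be removable; proving this would confirm that the $+$ operations in the existing arguments are artifacts of the coding rather than intrinsic features, and would provide a template for the full conjecture.
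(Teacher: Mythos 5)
This statement is the paper's own Conjecture in Section 8, explicitly presented as a ``bold conjecture''; the paper offers no proof of it, and it remains open. There is therefore no proof of the paper's to compare against. What you have written is, by your own admission, a research plan and not a proof: you flag the decisive gap yourself, namely that the argument would require a coherent or ``uniform'' version of the Gao--Jackson hyperfiniteness theorem along the tower $(G_k)$, and that no such uniform theorem is currently available. Your diagnosis of where the $+$ operations enter is accurate --- each application of Lemma~\ref{rankbelowomega2} or Lemma~\ref{seqrankbelowomega2} absorbs one limit-ordinal plateau into a $+$ because the coding maps $\hat{\psi}_m$ and $\tilde{\psi}_m$ must quantify over all enumeration indices $i$ rather than select a canonical one --- and the idea of replacing that quantification by a Borel selector is exactly what would be needed. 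But nothing in the proposal discharges the selector problem, so the proposal stops at the point where the real difficulty begins.

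Two further remarks. First, as literally worded, the conjecture cannot hold for all non-archimedean abelian Polish $G$: $E_0^\omega$ is a Borel (indeed ${\bf\Pi}^0_3$) equivalence relation, and any equivalence relation Borel reducible to a Borel one is itself Borel, so for non-tame $G$ the non-Borel orbit equivalence relation cannot reduce to $E_0^\omega$. Your plan is implicitly confined to the tame case, since it relies on the tower from Theorem~\ref{main1}~(ii), but this restriction should be made explicit. Second, your proposed warm-up --- the single-plateau case of Theorem~\ref{allpcompact}, where group trees have height at most $\omega$ and one tries to remove the lone $+$ in $(E_0^\omega)^+$ from Theorem~\ref{oneplus} --- is indeed the right test: the paper's Section 6 only shows that $(E_0^\omega)^+$ is not sharp (no $G$-orbit equivalence relation is bireducible with it), which leaves open a large gap between $E_0^\omega$ and $(E_0^\omega)^+$. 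Settling the warm-up in either direction would be genuine progress, but it, too, is not accomplished here.
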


Finally, Theorem~\ref{essentialcountable} is a special case of the following general conjecture.

\begin{conjecture} Let $G$ be any abelian Polish group and $F$ an orbit equivalence relation induced by a Borel action of $G$ on a Polish space. If $E\leq_B F$ and $E$ is essentially countable, then $E$ is essentially hyperfinite.
\end{conjecture}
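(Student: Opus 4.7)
The plan is to follow the template of the proof of Theorem~\ref{essentialcountable}, but with each step replaced by its general abelian analog. First, by Hjorth--Mackey (c.f. \cite{GaoBook} Theorem 3.5.2), it suffices to handle the case where $F$ is a universal $G$-orbit equivalence relation, so I would work directly with an essentially countable $E\leq_B E^X_G$ for some Borel action of the abelian Polish group $G$ on a Polish space $X$.

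The main step is to establish an analog of Hjorth--Kechris \cite{HK}, Theorem 7.3, for arbitrary abelian Polish groups: namely, that whenever $E\leq_B E^X_G$ is essentially countable, there exist Polish spaces $(Z_m)_{m\in\omega}$ together with Borel actions of countable abelian subgroups $\Gamma_m\leq G$ such that
$$ E\leq_B \bigoplus_{m\in\omega} E^{Z_m}_{\Gamma_m}. $$
Once such a decomposition is in hand, each $E^{Z_m}_{\Gamma_m}$ is hyperfinite by Gao--Jackson \cite{GJ} (since $\Gamma_m$ is countable abelian), so the direct sum is also hyperfinite, and hence $E\leq_B E_0$, as desired.

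To attempt the decomposition, I would combine two ingredients. The first is the observation, already implicit in Kechris's proof for locally compact groups, that essential countability of the relevant $G$-orbit equivalence relation forces the stabilizers on a suitable $E$-invariant Borel set to be ``large''. For abelian $G$, the coset spaces $G/\mathrm{Stab}(x)$ are themselves groups, and one would try to select, in a Borel fashion parametrized by $m$, countable abelian subgroups $\Gamma_m\leq G$ whose translates exhaust enough of the orbits on a piece $X_m\subseteq X$. The second is the device from Hjorth--Kechris of partitioning $X$ into countably many $G$-invariant Borel pieces $X_m$ on each of which the action has ``rank $m$'' for a suitable complexity filtration, reducing the action on each $X_m$ to that of a countable subgroup.

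The main obstacle is that the Hjorth--Kechris argument relies crucially on a neighborhood basis of open subgroups at the identity, which for a general abelian Polish group (for example $\R^\omega$ or $\ell^2$) does not exist. One would need an alternative filtration of $G$ by countable subgroups that works in the presence of a nontrivial connected component $G_0$. A natural first reduction is to separate the analysis of the connected quotient $G/G_0$, which is non-archimedean abelian and to which Theorem~\ref{essentialcountable} applies, from that of $G_0$ itself, and then fiber the two analyses together via a short exact sequence argument. However, bridging the two inside a single essentially countable Borel equivalence relation appears to require a genuinely new selection principle beyond what is used in the non-archimedean case, and I expect this to be the principal difficulty; it is likely the reason the authors state this only as a conjecture.
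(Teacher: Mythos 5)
This statement is Conjecture 8.4 in the paper, which the authors explicitly leave open: they write that beyond Theorem~\ref{essentialcountable} ``the conjecture is wide open.'' There is therefore no proof in the paper to compare against, and your proposal, by your own acknowledgement, is also not a proof but a strategy sketch together with an identification of where it breaks down.

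Your diagnosis is accurate and matches what the authors themselves signal. The proof of Theorem~\ref{essentialcountable} rests on Hjorth--Kechris (\cite{HK}, Theorem 7.3), which produces the decomposition $E\leq_B\bigoplus_m E^{Z_m}_{H^m}$ by exploiting the fact that $\prod_n H_n$ has a neighborhood basis at the identity consisting of open subgroups with countable quotients; that is precisely the non-archimedean structure. A general abelian Polish group such as $\R^\omega$ or $\ell^2$ has no such basis, so there is no analogous filtration by open subgroups to drive the reduction to countable group actions. Your proposed reduction to the connected component $G_0$ and the totally disconnected quotient $G/G_0$ is a natural place to start, but as you observe, there is no known way to recombine the two analyses inside a single essentially countable equivalence relation, and no known replacement for the Hjorth--Kechris selection machinery in the connected case. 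In short: you have correctly identified the obstacle that makes this a conjecture rather than a theorem, and the gap you flag is genuine and unresolved.
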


This conjecture was first mentioned in \cite{Hj} as a passing remark, and has gained traction since the settlement of the countable case \cite{GJ}. Our Theorem~\ref{essentialcountable} provides more evidence, beyond which the conjecture is wide open.

\end{document}